\theoremstyle{plain}
\newtheorem{theorem}{Theorem}[section]
\newtheorem{proposition}[theorem]{Proposition}
\newtheorem*{theorem*}{Theorem}
\newtheorem*{corollary*}{Corollary}
\newtheorem{corollary}[theorem]{Corollary}
\newtheorem*{lemma*}{Lemma}
\newtheorem{lemma}[theorem]{Lemma}
\newtheorem{thml}{Theorem}
\newtheorem{condl}{Condition}
\theoremstyle{remark}
\newtheorem{remark}[theorem]{Remark}
\newtheorem*{case*}{Case}
\theoremstyle{definition}
\newcommand{\tw}[1]{{}^#1\!}
\newcommand{\Q}{\mathbb{Q}}
\newcommand{\aut}{\mathrm{Aut}}
\newcommand{\out}{\mathrm{Out}}
\newcommand{\R}{\mathbb{R}}
\newcommand{\F}{\mathbb{F}}
\newcommand{\syl}{\mathrm{Syl}}
\newcommand{\ind}{\mathrm{ind}}
\newcommand{\Ind}{\mathrm{Ind}}
\newcommand{\irr}{\mathrm{Irr}}
\newcommand{\la}{\lambda}
\newcommand{\wt}[1]{\widetilde{#1}}
\newcommand{\bG}{\mathbf{G}}
\newcommand{\bL}{\mathbf{L}}
\newcommand{\bT}{\mathbf{T}}
\newcommand{\bg}[1]{\mathbf{#1}}
\newcommand{\norm}{\mathrm{N}}
\newcommand{\cen}{\mathrm{C}}
\newcommand{\zen}{\mathrm{Z}}
\newcommand{\HC}{\mathrm{R}}
\newcommand{\SL}{\operatorname{SL}}
\newcommand{\SU}{\operatorname{SU}}
\newcommand{\type}[1]{\mathsf{#1}}
\newcommand{\Sp}{\operatorname{Sp}}
\newcommand{\gal}{\mathscr{G}}
\newcommand{\galh}{\mathscr{H}}
\newcommand{\assumption}[1]{\begin{center}
\fbox{
\parbox{6in}{\begin{center}
#1
\end{center}}
}
\end{center}}
\title[Galois-equivariant McKay bijections]{Galois-equivariant McKay bijections for primes dividing $q-1$}
\author{A. A. Schaeffer Fry}
\address{{Dept. of Mathematics and Statistics}, {Metropolitan State University of Denver}, {Denver, CO 80217}}
\email{aschaef6@msudenver.edu}
\date{}
\thanks{The author was supported in part by grants from the Simons Foundation (Award No. 351233) and the National Science Foundation (Award No. DMS-1801156).  Part of the work was completed while the author was in residence at the Mathematical
Sciences Research Institute in Berkeley, California during the Spring 2018 semester program
``Group Representation Theory and Applications", supported by the National Science Foundation
under Award No. DMS-1440140.}
\begin{document}
\maketitle

\begin{abstract}

We prove that for most groups of Lie type, the bijections used by Malle and Sp{\"a}th in the proof of Isaacs--Malle--Navarro's inductive McKay conditions for the prime $2$ and odd primes dividing $q-1$ are also equivariant with respect to certain Galois automorphisms.  In particular, this shows that these bijections are candidates for proving Navarro--Sp{\"a}th--Vallejo's recently-posited inductive Galois--McKay conditions.

\vspace{0.25cm}

\noindent \textit{Mathematics Classification Number:} 20C15, 20C33

\noindent \textit{Keywords:} local-global conjectures, characters, McKay conjecture, Galois-McKay, finite simple groups, Lie type

\end{abstract}

\section{Introduction} 

One of the main sets of questions of interest in the representation theory of finite groups falls under the umbrella of the so-called local-global conjectures, which relate the character theory of a finite group $G$ to that of certain local subgroups.  Another rich topic in the area is the problem of determining how certain other groups (for example, the group of automorphisms $\aut(G)$ of $G$, or the Galois group $\mathrm{Gal}(\overline{\Q}/\Q)$) act on the set of irreducible characters of $G$ and the related question of determining the fields of values of characters of $G$.  This paper concerns the McKay--Navarro conjecture, which incorporates both of these main problems.

For a finite group $G$, the field $\Q(e^{2\pi i/|G|})$, obtained by adjoining the $|G|$th roots of unity in some algebraic closure $\overline{\Q}$ of $\Q$, is a splitting field for $G$.  The group $\gal:=\mathrm{Gal}(\Q(e^{2\pi i/|G|})/\Q)$ acts naturally on the set of irreducible ordinary characters, $\irr(G)$, of $G$ via $\chi^\sigma(g):=\chi(g)^\sigma$ for $\chi\in\irr(G), g\in G, \sigma\in\gal$.  

In \cite{navarro2004}, G. Navarro conjectured a refinement to the well-known McKay conjecture that incorporates this action of $\gal$.  Let $\ell$ be a prime and write $\galh_\ell\leqslant\gal$ for the subgroup consisting of $\sigma\in\gal$  satisfying that there is some nonnegative integer $r$ such that $\zeta^\sigma=\zeta^{\ell^r}$ for every $\ell'$ root of unity $\zeta$.  Specifically, the McKay--Navarro conjecture (also sometimes known as the ``Galois--McKay" conjecture) posits that if $P\in\syl_\ell(G)$ is a Sylow $\ell$-subgroup of $G$, then for every $\sigma\in\galh_\ell$, the number of characters in $\irr_{\ell'}(G)$ that are fixed by $\sigma$ is the same as the number of characters in $\irr_{\ell'}(\norm_G(P))$ fixed by $\sigma$.  Here for a finite group $X$, we write $\irr_{\ell'}(X)=\{\chi\in\irr(X)\mid\ell\nmid\chi(1)\}$.  

A stronger version of the McKay--Navarro conjecture says that, further, there should be an $\galh_\ell$-equivariant bijection between the sets $\irr_{\ell'}(G)$ and $\irr_{\ell'}(\norm_G(P))$.  In particular, this would imply that the corresponding fields of values are preserved over the field of $\ell$-adic numbers (see also \cite{turull}).  This version of the conjecture was recently reduced by Navarro--Sp{\"a}th--Vallejo in \cite{NavarroSpathVallejo} to proving certain inductive conditions for finite simple groups. These ``inductive Galois--McKay conditions" can roughly be described as an ``equivariant" condition and an ``extension" condition. In this article, we are concerned with the first part of \cite[Definition 3.1]{NavarroSpathVallejo} (the equivariant bijection part of the inductive Galois--McKay conditions):

\begin{condl}\label{cond:condition}

Let $G$ be a finite quasisimple group and let $Q\in\syl_\ell(G)$.  Then there is a proper $\aut(G)_Q$-stable subgroup $M$ of $G$ with $\norm_G(Q)\leqslant M$ and an $\aut(G)_Q\times\galh_\ell$-equivariant bijection 
$\irr_{\ell'}(G)\rightarrow\irr_{\ell'}(M)$ such that corresponding characters lie over the same character of $\zen(G)$.

\end{condl}

In \cite{SchaefferFrySN2S1, SFT18a, SFgaloisHC}, the author proves a consequence of the McKay--Navarro conjecture in the case that $\ell=2$.  On the way to proving this consequence, the author described in \cite{SFgaloisHC} the action of Galois automorphisms on characters of groups of Lie type, in terms of the Howlett--Lehrer parametrization of Harish-Chandra series. 
The main obstruction to the action of $\gal$ on these parameters  being well-behaved is the presence of three characters of the so-called relative Weyl group that occur in the description.  In \cite[Sections 3-4]{SFT20}, one of these characters is studied in some detail on the way to describing the fields of values for characters of symplectic and special orthogonal groups.  Here, we study the other two, especially for the principal series, which allows us to obtain our main results.


In \cite{MalleSpathMcKay2}, G. Malle and B. Sp{\"a}th complete the proof of the ordinary McKay conjecture for the prime $\ell=2$.  This groundbreaking result builds on previous work of Malle, Sp{\"a}th, and others, showing that simple groups satisfy the so-called ``inductive McKay" conditions provided by Isaacs--Malle--Navarro in  \cite[Section 10]{IsaacsMalleNavarroMcKayreduction}.  In particular, much of this work built upon \cite{malleheightzero, Malle08}, in which Malle provided a set of bijections in the case of groups of Lie type that served as candidates for the inductive McKay conditions.  In the current article, we adapt the methods and results of \cite{SFgaloisHC} to show that in most cases for the prime $\ell=2$ or odd primes dividing $q-1$, these bijections used in \cite{malleheightzero, Malle08, MalleSpathMcKay2} to prove the inductive McKay conditions for groups of Lie type defined over $\F_q$ are also $\galh_\ell$-equivariant, and therefore satisfy \prettyref{cond:condition}.  In particular, these bijections are therefore candidates for eventually proving the inductive Galois--McKay conditions.  The main result is the following:


\begin{thml}\label{thm:main}
Let $q$ be a power of a prime and assume $G$ is a group of Lie type of simply connected type defined over $\F_q$ and
such that $G$ is not of type $\type{A}_n$ nor of Suzuki or Ree type.   Let $\ell$ be a prime not dividing $q$ and write $d_\ell(q)$ for the multiplicative order of $q$ modulo $\ell$ if $\ell$ is odd, respectively modulo $4$ if $\ell=2$. Assume that one of the following holds:

\begin{enumerate}[label=(\arabic*)]
\item\label{mainodd}\label{maind=1} $d_\ell(q)=1$ and either $\ell\neq 3$ or $G$ is not $\type{G}_2(q)$ with $q\equiv 4,7\pmod 9$;
\item\label{main2} $\ell=2$, $d_\ell(q)=2$, and $G$ is not $\type{D}_n(q)$ or $\tw{2}\type{D}_n(q)$ with $n\geq 4$.
\end{enumerate}
Then \prettyref{cond:condition} holds for $G$ and the prime $\ell$.
\end{thml}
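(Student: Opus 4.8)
The strategy is to leverage the explicit bijections of Malle (in \cite{malleheightzero, Malle08}) together with the Galois-action description from \cite{SFgaloisHC}, reducing \prettyref{cond:condition} to a verification that these maps respect the $\galh_\ell$-action on Harish-Chandra parameters. First I would fix $G$ simply connected of one of the allowed types, $\ell$ with $d_\ell(q)=1$ (or $\ell=2$ with $d_\ell(q)=2$ in case \ref{main2}), and recall that under these hypotheses a Sylow $\ell$-subgroup $Q$ is contained in a maximally split torus-normalizer, so that the relevant ``local'' subgroup $M$ can be taken to be (the stabilizer inside $G$ of) such a normalizer, or the group $\norm_G(Q)$ itself when that already works; one checks directly that this $M$ is $\aut(G)_Q$-stable since the maximally split torus is. The characters in $\irr_{\ell'}(G)$ are then governed by Harish-Chandra theory: when $d_\ell(q)=1$, every character of $\ell'$-degree lies in the principal series (this uses that $\ell\nmid q$ and the degree formulas of Lusztig, together with the hypothesis that $d_\ell(q)=1$ forces the relevant $d$-tori to be split), so both sides are parametrized via the Howlett--Lehrer/Lusztig parametrization by pairs consisting of an $\ell'$-character and data for the relative Weyl group $W$, which for the principal series is the full Weyl group.

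Next I would recall the precise content of \cite{SFgaloisHC}: the action of $\sigma\in\galh_\ell$ on a character $\chi$ indexed by a Harish-Chandra parameter $(L,\lambda,\eta)$ is described by its action on $\lambda$ (a cuspidal character of a Levi $L$), together with a twist governed by at most three characters of the relative Weyl group — call them the sign-type characters. The crucial input is the statement, proved earlier in the paper (the part promised in the introduction where ``we study the other two, especially for the principal series''), that for the principal series these problematic characters are trivial or behave compatibly, so the Galois action on the parameter $\eta \in \irr(W)$ is simply $\eta \mapsto \eta$ (characters of $W$ are rational-valued, hence $\galh_\ell$-fixed) possibly composed with a linear character that matches on both sides. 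With this, the Galois action on $\irr_{\ell'}(G)$ becomes: act on the underlying cuspidal datum, i.e., on characters of the torus $\mathbf{T}^F$, up to the relative-Weyl-group twist. On the normalizer side $M$, Clifford theory over the torus gives an entirely parallel description, and Malle's bijection is built to match the cuspidal data and the relative Weyl group pieces; so I would verify square-by-square that Malle's bijection intertwines these two descriptions of the $\galh_\ell$-action, and separately that it is $\aut(G)_Q$-equivariant (this is essentially already in \cite{malleheightzero, Malle08, MalleSpathMcKay2}, and compatible with the Galois part since automorphisms and Galois commute on $\irr$). Finally, the central-character condition is checked by noting that both Malle's bijection and the Galois action fix the restriction to $\zen(G)$, the former by construction and the latter because $\sigma$ acts on $\irr(\zen(G))$ compatibly.

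For case \ref{main2}, where $\ell=2$ and $d_2(q)=2$ (so $q\equiv 3\pmod 4$), I would run the same argument but now using the bijections of \cite{Malle08, MalleSpathMcKay2} for the prime $2$, where the relevant $d$-split Levi is associated to $\Phi_2$; the role of the principal series is played by the $\Phi_2$-series, and the Galois-equivariance of the Howlett--Lehrer parametrization in this twisted setting is the case-$d=2$ analogue that must be extracted from \cite{SFgaloisHC} and \cite[Sections 3--4]{SFT20}. The exclusion of $\type{D}_n$ and $\tw{2}\type{D}_n$ for $n\geq 4$ in this case, and of $\type{G}_2(q)$ with $q\equiv 4,7\pmod 9$ when $\ell=3$, is precisely because in those configurations one of the sign-type characters of the relative Weyl group fails to be $\galh_\ell$-fixed (or the Sylow structure is more complicated), so the clean matching breaks down — these are handled (or left open) separately.

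The main obstacle I anticipate is the middle step: showing that Malle's combinatorially-defined bijection is genuinely $\galh_\ell$-equivariant and not merely a bijection of the same cardinality of fixed points. Malle's maps were designed to be $\aut(G)$-equivariant and to preserve degrees, but they involve non-canonical choices (extensions of characters from Levis, choices of reference characters for the relative Weyl group), and one must check these choices can be made Galois-stably. Concretely, the difficulty concentrates in controlling how $\sigma$ permutes the cuspidal characters $\lambda$ of the torus versus how it permutes the corresponding characters on the normalizer side, and in verifying that the relative-Weyl-group twist is the \emph{same} linear character on both sides — this is exactly where the detailed analysis of ``the other two characters'' of the relative Weyl group from the earlier sections of the paper is indispensable, and where the listed exceptional cases arise.
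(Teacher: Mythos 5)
Your plan for case \ref{maind=1} in the generic situation (odd $\ell\mid q-1$, and $\ell=2$ with $q\equiv 1\pmod 8$) is essentially the paper's route: take $M=N=\norm_G(\bg{S}_1)$, use Malle--Sp\"ath's principal-series bijection \eqref{eq:bij}, and check Galois-equivariance via the parameter description of Theorem~\ref{thm:GaloisAct}, with the three obstruction characters controlled as in Sections~\ref{sec:gamma}--\ref{sec:genericalg} (this is Theorem~\ref{thm:mainsplitinline}). But there are two genuine gaps. First, within case \ref{maind=1} itself, when $\ell=2$, $G=\Sp_{2n}(q)$ and $q\equiv 5\pmod 8$ (allowed by $d_2(q)=1$), the torus normalizer $N$ does \emph{not} contain $\norm_G(Q)$, so your choice of $M$ fails; the paper must instead use Malle's subgroups $M=\Sp_n(q)\wr 2$ or $\Sp_{2(n-m)}(q)\times\Sp_{2m}(q)$ (Section~\ref{sec:typeC}), where there is no Howlett--Lehrer parametrization of $\irr_{2'}(M)$ to match against. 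Equivariance there is obtained by a different mechanism: odd-degree characters split into rational unipotent ones and non-unipotent ones occurring in $\sigma$-swapped pairs, and the explicit computation of $\gamma_{\la,\sigma}\delta_{\la,\sigma}$ from Lemma~\ref{lem:rdelta2} shows the swapping condition ($\sqrt{\omega q}$ moved by $\sigma$) is the \emph{same} on both sides, so the bijection can be arranged pair-by-pair. Your proposal has no substitute for this step.

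Second, for case \ref{main2} your plan to run a ``$d=2$'' analogue of the Galois-equivariant Howlett--Lehrer parametrization is not available: Theorem~\ref{thm:GaloisAct} (and the machinery of \cite{SFgaloisHC}) concerns ordinary Harish-Chandra series only, and no $\Phi_2$-series version is established in the cited sources, so this step would not go through as stated. Moreover it is not what is needed: by Theorem~\ref{thm:MS7.7}, odd-degree characters of $G$ still lie in ordinary (principal, or one exceptional type-$\type{C}$) series even when $q\equiv 3\pmod 4$; the twisting enters only on the local side $N_1=\norm_{\bg{G}}(\bg{S})^{vF}$. The paper avoids your proposed analogue entirely: for $\type{G}_2,\tw{3}\type{D}_4,\type{F}_4,\type{E}_7,\type{E}_8,\type{B}_n$ it proves that \emph{all} of $\irr_{2'}(G)$ and $\irr_{2'}(N_1)$ are rational-valued (Proposition~\ref{prop:d2rational}, using $s^2=1$ from Lemma~\ref{lem:sinvol}, the twisted extension map of Corollary~\ref{cor:exceptLambda1}, and the $W_1(\la_1)$-analysis \eqref{eq:W_1R_1}), so any $\aut(G)_Q$-equivariant bijection is automatically $\galh_2$-equivariant; for $\type{E}_6^\epsilon$ it argues instead through the action of $\sigma$ on the semisimple label $s$ of the Lusztig series on both sides (Proposition~\ref{prop:E6}). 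Without these ingredients (or some replacement), your outline does not yield case \ref{main2}. (A minor point: the exclusions you attribute to sign-character failure --- $\type{G}_2(q)$, $q\equiv 4,7\pmod 9$ for $\ell=3$ --- actually arise because $N$ fails to contain the Sylow normalizer there, not from the Galois analysis.)
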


We remark that the omitted cases require additional techniques, which we plan to address in future work.  Further, the ``extension" condition in \cite[Definition 3.1]{NavarroSpathVallejo} (i.e., parts (iii) and (iv) of the relation on $\galh_\ell$-triples \cite[Definition 1.5]{NavarroSpathVallejo}) presents a separate set of challenges that we do not address here.  However, in the cases covered by \prettyref{thm:main} in which $\out(G)$ is trivial (and hence this condition is trivial) and the simple group $G/\zen(G)$ does not have an exceptional Schur multiplier, we obtain as an immediate corollary that $G/\zen(G)$ satisfies the inductive Galois--McKay conditions.  
In particular, \prettyref{thm:main} yields that for $q=p$ a prime, the groups $\type{G}_2(p)$ with $p>3$ and $\type{F}_4(p)$ and $\type{E}_8(p)$ with $p$ odd satisfy the inductive Galois--McKay conditions for all primes $\ell$ that divide $p-1$, with the possible exception of $\ell=3$ for $\type{G}_2(p)$.

%
%
%
%

Setting aside for the moment the inductive conditions, even the statement of the McKay--Navarro conjecture has only been proved for a very limited number of simple groups, most notably the case of groups of Lie type in defining characteristic \cite{ruhstorfer}.  On the way to proving \prettyref{thm:main}, we show that if $G$ of type $\type{B}$ or is exceptional but not of type $\type{E}_6, \tw{2}\type{E}_6$, nor Suzuki or Ree type, then all odd-degree characters of $G$ are rational-valued.  As a consequence, we obtain the statement of the McKay--Navarro conjecture for these groups.

\begin{thml}\label{thm:galmck}
Let $q$ be a power of an odd prime and let $S$ be a simple group $\type{G}_2(q)$, $\tw{3}\type{D}_4(q)$, $\type{F}_4(q),$  $\type{E}_7(q)$, $\type{E}_8(q)$, 
 $\type{B}_n(q)$ with $n\geq3$, or $\type{C}_n(q)$ with $n\geq 2$ and $q\equiv \pm1\pmod 8$.  Then the McKay--Navarro conjecture holds for $S$ for the prime $\ell=2$.
 \end{thml}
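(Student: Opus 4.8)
The plan is to deduce the McKay--Navarro conjecture (the counting form, and with it the stronger bijective form) for $S$ at $\ell=2$ from three ingredients: the ordinary McKay conjecture at $\ell=2$, which holds for $S$ by Malle--Sp\"ath \cite{MalleSpathMcKay2}; the triviality of the $\galh_2$-action on $\irr_{2'}(S)$; and the triviality of the $\galh_2$-action on $\irr_{2'}(\norm_S(P))$ for $P\in\syl_2(S)$. Indeed, if the latter two hold, then for every $\sigma\in\galh_2$ one has $|\irr_{2'}(S)^\sigma|=|\irr_{2'}(S)|=|\irr_{2'}(\norm_S(P))|=|\irr_{2'}(\norm_S(P))^\sigma|$, the middle equality being ordinary McKay; and moreover \emph{any} bijection $\irr_{2'}(S)\to\irr_{2'}(\norm_S(P))$ is then automatically $\galh_2$-equivariant. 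So the entire content lies in the two $\galh_2$-triviality assertions.

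For $\irr_{2'}(S)$: write $S=G/\zen(G)$ with $G$ the relevant simply connected group, so that $\zen(G)$ is a $2$-group (of order dividing $2$ here) and $\irr(S)$ is the set of $\chi\in\irr(G)$ trivial on $\zen(G)$; thus it suffices to treat $G$. If $G$ is of type $\type{B}_n$ with $n\geq 3$, or is one of $\type{G}_2(q),\tw{3}\type{D}_4(q),\type{F}_4(q),\type{E}_7(q),\type{E}_8(q)$, this is immediate from the fact --- established along the way to Theorem~\ref{thm:main} --- that every odd-degree character of $G$ is rational-valued, hence fixed by all of $\gal\supseteq\galh_2$. The type $\type{C}_n$ case is genuinely different, since $\Sp_{2n}(q)$ carries irrational odd-degree characters of Weil type; here I would invoke the field-of-values computations of \cite{SFT20} to see that every $\chi\in\irr_{2'}(\Sp_{2n}(q))$ has values in $\Q$ or in the single quadratic field $\Q(\sqrt{q^{*}})$, where $q^{*}:=(-1)^{(q-1)/2}q$. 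The hypothesis $q\equiv\pm1\pmod 8$ is precisely the statement that $q^{*}\equiv1\pmod 8$, equivalently that $\Q(\sqrt{q^{*}})$ embeds into $\Q_2$; and since $\galh_2$ fixes a character exactly when its field of values lies in $\Q_2$, these characters --- and hence those of $S=\mathrm{PSp}_{2n}(q)$ --- are $\galh_2$-fixed.

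For $N:=\norm_S(P)$: here $P=O_2(N)$ and $N/P$ has odd order, so Clifford theory over $P$ expresses each $\chi\in\irr_{2'}(N)$ as $\Ind_{N_\lambda}^N(\wh\lambda\,\beta)$ for a linear character $\lambda\in\irr(P)$ (taken up to $N$-conjugacy), the canonical extension $\wh\lambda$ of $\lambda$ to its stabilizer $N_\lambda$ (which exists because the order of $\lambda$ is a $2$-power and $|N_\lambda:P|$ is odd), and a character $\beta\in\irr(N_\lambda/P)$. As the canonical extension commutes with $\gal$ and with $N$-conjugation, the $\galh_2$-triviality of $\irr_{2'}(N)$ reduces to two checks: every $N$-orbit of linear characters of $P$ is $\galh_2$-stable, and every odd-order section $N_\lambda/P$ that occurs is $2$-rational. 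I would carry these out from the explicit structure of $P$ and $\norm_S(P)$, separately for $q\equiv 1$ and $q\equiv 3 \pmod 4$ (and $q\bmod 8$ for $\type{C}_n$): for the first check one uses that in each of these types $-1$ lies in the relevant (relative) Weyl group, so that inversion on the toral part of $P$ is induced by conjugation in $N$, and that $P^{\mathrm{ab}}$ is elementary abelian of exponent $2$ --- this last point reducing to the corresponding fact for Sylow $2$-subgroups of the Weyl groups in question --- whence $\galh_2$ fixes the linear characters of $P$ outright; for the second check one inspects the (small) odd-order sections that arise and confirms that no $\Q(\zeta_m)$-irrationality with $m$ odd survives.

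I expect the main obstacle to be the analysis of $N=\norm_S(P)$ in the preceding paragraph. The $\irr_{2'}(S)$ side is in effect a repackaging of the rationality statement that falls out of the proof of Theorem~\ref{thm:main} together with the computations of \cite{SFT20}; by contrast, pinning down the Sylow $2$-normalizers precisely for each type and verifying that $\irr_{2'}(\norm_S(P))$ harbours no odd-prime irrationality --- the kind that genuinely does arise in, for instance, Sylow $2$-normalizers of rank-one groups, where one meets sections isomorphic to $\SL_2(3)$ --- is a real, case-by-case group-theoretic task. A secondary point to watch is that the $2'$-sections $N_\lambda/P$ be fully $2$-rational, rather than merely rational after passing to $\Q_2$.
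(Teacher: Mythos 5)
Your overall frame is the right one, and the global side reproduces the paper's argument almost exactly: rationality of $\irr_{2'}(G)$ from Proposition~\ref{prop:d2rational} for all the listed types except $\type{C}_n$, and for $\type{C}_n$ the observation (via \cite{SFT20}) that the odd-degree characters of $\Sp_{2n}(q)$ take values in $\Q(\sqrt{q^*})$ with $q^*=(-1)^{(q-1)/2}q$, so that $q\equiv\pm1\pmod 8$ forces $\galh_2$-fixedness. No complaints there.

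The gap is on the local side, and it is a real one. You set up a full Clifford-theoretic analysis of $N:=\norm_S(P)$ with $P=O_2(N)$, an odd-order quotient $N/P$, canonical extensions $\wh\lambda$, and odd stabilizer sections $N_\lambda/P$ whose $2$-rationality you would have to verify case by case --- and you explicitly flag this as the main obstacle. But for every $S$ in the statement the Sylow $2$-subgroup is \emph{self-normalizing}, $\norm_S(P)=P$; this is the corollary of the main theorems of \cite{kondratiev2005}, which is precisely the fact the paper invokes. Once you know that, there are no odd sections $N_\lambda/P$ at all: $\irr_{2'}(\norm_S(P))=\irr_{2'}(P)=\irr(P/[P,P])$, and by Proposition~\ref{prop:NTreal} (from \cite{NTreal}) the abelianization $P/[P,P]$ is elementary abelian of exponent $2$, so every odd-degree character of $\norm_S(P)$ is $\{\pm1\}$-valued and fixed by all of $\gal$, let alone $\galh_2$. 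That is the paper's entire local argument, one line once the two cited facts are in hand, and it dissolves exactly the case-by-case structural analysis you anticipate having to do. Without Kondratiev's self-normalization result your plan is not yet a proof; with it, the local side is immediate and your own remark that $P^{\mathrm{ab}}$ is elementary abelian finishes things off.
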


The structure of the paper is as follows.  In \prettyref{sec:prelim}, we set some notation and hypotheses to be used throughout the paper, and recall some previous results from \cite{MalleSpathMcKay2} about odd-degree characters and from \cite{SFgaloisHC} regarding the action of $\gal$ on characters of groups of Lie type.  In Sections \ref{sec:gamma}-\ref{sec:genericalg}, we discuss in some detail the three characters of the relative Weyl group that arise in the description of this action as mentioned above.  
In \prettyref{sec:principalseries},  we prove \prettyref{thm:main}\ref{mainodd} with the exception of $\ell=2$ and $G=\Sp_{2n}(q)$ for certain $q$.  In \prettyref{sec:typeC}, we complete the proof of \prettyref{thm:main} when $\ell=2$ and  $G=\Sp_{2n}(q)$.  Finally in \prettyref{sec:typeB}, we prove \prettyref{thm:main}\ref{main2} for the remaining cases, as well as \prettyref{thm:galmck}.

\section{Additional Notation and Preliminaries}\label{sec:prelim}

In this section, we set some notation to be used throughout the paper, before recalling the main result of \cite[Section 3]{SFgaloisHC} and some key ingredients from \cite{MalleSpathMcKay2}.  

\subsection{Basic Notation}

Let $X\leqslant Y$ be a subgroup of a finite group $Y$. For an irreducible character $\chi\in\irr(Y)$, we denote by $\chi\downarrow X$ the restriction of $\chi$ to a character of $X$.  For $\varphi\in\irr(X)$, we denote by $\Ind_X^Y(\varphi)$ the induced character of $Y$. The notations $\irr(X|\chi)$ for $\chi\in\irr(Y)$ and $\irr(Y|\varphi)$ for $\varphi\in\irr(Y)$ will denote the set of irreducible constituents of $\chi\downarrow X$, respectively of $\Ind_X^Y(\varphi)$. For a prime $\ell$, the notation $\irr_{\ell'}(\bullet)$ will always mean the subset of $\irr(\bullet)$ consisting of characters with degree relatively prime to $\ell$.

If $X\lhd Y$ is normal and $\varphi\in\irr(X)$, we will denote by $Y_\varphi$ the stabilizer (also known as the inertia subgroup) of $\varphi$ in $Y$.  As in \cite{Spath09}, if $\mathcal{X}$ is a subset of $\irr(X)$ such that every $\varphi\in\mathcal{X}$ extends to its inertia group $Y_\varphi$ in $Y$, we define an \emph{extension map for $\mathcal{X}$ with respect to $X\lhd Y$} to be any map $\mathcal{X}\rightarrow \bigcup_{\varphi\in\mathcal{X}} \irr(Y_\varphi)$ that sends each $\varphi\in\mathcal{X}$ to one of its extensions to $Y_\varphi$.

If $Y$ is a cyclic group, we will denote by $-1_Y$ the unique character of $Y$ of order $2$.

\subsection{Groups of Lie Type and Relevant Subgroups}

Let $q=p^a$ be a power of a prime $p$. Throughout, unless otherwise specified, we let $\bG$ be a simple, simply connected algebraic group defined over $\overline{\mathbb{F}}_q$.


 Let $\bT\leqslant\bg{B}$ be a maximal torus and Borel subgroup of $\bG$, respectively. Let $\Phi$ be the root system of $\bg{G}$ with respect to $\bg{T}\leqslant \bg{B}$ and $\Delta$ a set of fundamental roots.  We will at times do computations in $\bG$ using the Chevalley generators and relations, as in \cite[Theorem 1.12.1]{gorensteinlyonssolomonIII}.  In particular, $x_\alpha(t), n_\alpha(t),$ and $h_\alpha(t)$ for $t\in\overline{\F}_q$ and $\alpha\in\Phi$ are as defined there, and $\bT$ may be written as the direct product of groups $\prod_{\alpha\in\Delta}\{h_\alpha(t)\mid t\in\overline{\F}_q^\times\}$ since $\bG$ is simply connected. 
 
Now let $G=\bG^F$ be the group of fixed points of $\bG$ under a Frobenius endomorphism $F$ defined over $\F_q$.  That is, $F\colon \bG\rightarrow \bG$ is of the form $F_q\circ \tau$, where $\tau$ is a graph automorphism defined by $\tau\colon x_\alpha(t)\mapsto x_{\tau'(\alpha)}(t)$ for $\alpha\in\Delta, t\in \overline{\F}_q$, and $\tau'$ a length-preserving symmetry of the Dynkin diagram associated to $\Delta$; and $F_q=F_p^a$ where $F_p\colon \bG\rightarrow \bG$ is defined by $F_p: x_\alpha(t)\mapsto x_\alpha(t^p)$ for $t\in \overline{\F}_q$ and $\alpha\in\Phi$. (In particular, we omit the cases of Suzuki and Ree groups.)  Then $\bT$ and $\bg{B}$ are $F$-stable.

 As in \cite[Setting 2.1]{Spath09} or \cite[Section 3.A]{MalleSpathMcKay2}, we let $V:=\langle n_\alpha(\pm1)\mid\alpha\in\Phi\rangle\leqslant \norm_{\bg{G}}(\bg{T})$ be the \emph{extended Weyl group} and let $H$ be the group $H:=V\cap \bg{T}=\langle h_\alpha(-1)\mid\alpha\in\Phi\rangle$, which is elementary abelian of size $(2,q-1)^{|\Delta|}$.   Then we have $\norm_{\bG}(\bT)=\langle \bT, V\rangle$.  Let $\bg{W}:=\norm_{\bG}(\bT)/\bT$ be the Weyl group of $\bG$. (Note that $\bg{W}=V$ and $H=1$ if $p=2$.)  Throughout, we further write $T:=\bg{T}^F$ and $N:=\norm_{\bg{G}}(\bg{T})^F$.

Now, we let $v\in\bg{G}$ be the canonical representative in $V$ of the longest element of the Weyl group $\bg{W}$ of $\bg{G}$, as in \cite[Section 3.A]{MalleSpathMcKay2} or \cite[Definition 3.2]{spath10}.  Note that a suitable conjugation in $\bg{G}$ induces an automorphism of $\bg{G}$ mapping $G$ to $\bg{G}^{vF}$.  Write $T_1=\bg{T}^{vF}, V_1:=V^{vF}$, and  $H_1:=H^{vF}$.  Then by \cite[Lemma 3.2]{MalleSpathMcKay2} (see also \cite[Proposition 6.4 and Table 2]{Spath09} for the case $\tw{3}\type{D}_4(q)$ and \cite[Proposition 5.1 and Lemma 6.1]{Spath09} in the cases $\type{G}_2(q), \type{F}_4(q)$, and $\type{E}_8(q)$), we have $\bg{T}=\cen_{\bg{G}}(\bg{S})$ for some Sylow $2$-torus $\bg{S}$ of $(\bg{G}, vF)$, and we have $N=TV$ and $N_1=T_1V_1$, where $N_1:=\norm_{\bg{G}}(\bg{S})^{vF}$.  Further, \cite[Proposition 5.11]{malleheightzero} yields that $N$ controls $G$-fusion in $T$ and $N_1$ controls $\bg{G}^{vF}$-fusion in $T_1$, and the results of \cite{malleheightzero} imply that in most cases, the groups $N$ and $N_1$ are the natural choices to play the role of $M$ in \prettyref{cond:condition}.

We note the following, which follows from the fact that the longest element of $\bg{W}$ is central in the stated cases (see the proofs of \cite[Lemma 6.1]{Spath09} and \cite[Lemma 3.2]{MalleSpathMcKay2}).

\begin{lemma}\label{lem:V1V}
Assume $\bg{G}$ is of type $\type{B}_n, \type{C}_n, \type{D}_{2n}, \type{G}_2, \type{F}_4, \type{E}_7, \type{E}_8$ or that $G=\tw{2}{\type{D}_n}(q)$ or $\tw{3}{\type{D}_4}(q)$.   Then $V_1=V$ and $H_1=H$.
\end{lemma}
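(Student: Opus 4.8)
The plan is to reduce the claim to the single assertion that the Steinberg endomorphism $vF$ acts trivially on the finite group $V$. Since $H=V\cap\bT$ is contained in $V$, once $V_1=V^{vF}=V$ is known the inclusions $H\leqslant V=V_1\leqslant\bG^{vF}$ force $H_1=H^{vF}=H$, so it suffices to treat $V$.

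First I would record the routine structural reductions. As $v\in V\leqslant\norm_{\bG}(\bT)$ and $V$ is a subgroup, conjugation by $v$ stabilizes $V$, and hence also $H$; and $F$ stabilizes $V=\langle n_\alpha(\pm1)\mid\alpha\in\Phi\rangle$ and $H=\langle h_\alpha(-1)\mid\alpha\in\Phi\rangle$, since it permutes these generators according to $\tau'$ (up to structure-constant signs) and fixes the parameter because $(\pm1)^q=\pm1$. Thus $vF$ restricts to an automorphism of $V$, so $V_1\leqslant V$, and it remains to show this automorphism is the identity, i.e.\ that $F$ and conjugation by $v$ agree on $V$.

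Next I would bring in the key input: in every case of the statement the longest element $w_0$ of the relevant Weyl group acts on $\bT$ as inversion $t\mapsto t^{-1}$, and in particular is central. For the untwisted types $\type B_n,\type C_n,\type D_{2n},\type G_2,\type F_4,\type E_7,\type E_8$ this is exactly the classical list of irreducible root systems with $w_0=-\mathrm{id}$, so $w_0$ is central in $\bg W$ itself; for $G=\tw{2}\type{D}_n(q)$ and $\tw{3}\type{D}_4(q)$ the corresponding statements, for the appropriate $v$ (where the relevant Weyl group is the relative one, of type $\type B_{n-1}$, respectively $\type G_2$, whose longest element is again $-\mathrm{id}$), are those established in \cite[Proposition 5.1, Lemma 6.1, Proposition 6.4 and Table 2]{Spath09} and \cite[Lemma 3.2]{MalleSpathMcKay2}. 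In the untwisted cases this gives at once: conjugation by $v$ inverts $\bT$, hence fixes the elementary abelian $2$-group $H$ pointwise ($H=1$ when $p=2$); conjugation by $v$ is trivial on $\bg W=V/H$ because $w_0$ is central; and $F=F_q$ fixes $V$ pointwise since $\tau'=\mathrm{id}$ and $(\pm1)^q=\pm1$ yield $F(n_\alpha(\pm1))=n_\alpha(\pm1)$. So everything collapses to one point: that conjugation by $v$ is trivial not merely on $V/H$, but on all of $V$.

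That last point I would settle by a direct computation with the Chevalley relations of \cite[Theorem 1.12.1]{gorensteinlyonssolomonIII}. Writing $v\,n_\alpha(t)\,v^{-1}=n_{w_0(\alpha)}(\eta_\alpha t)=n_{-\alpha}(\eta_\alpha t)$ for the structure-constant sign $\eta_\alpha\in\{\pm1\}$ attached to the canonical representative $v$, and using $n_{-\alpha}(s)=n_\alpha(-s^{-1})$ together with $\eta_\alpha^2=1$, one gets $v\,n_\alpha(t)\,v^{-1}=n_\alpha(-\eta_\alpha t)$ for $t=\pm1$; so $v$ centralizes the generator $n_\alpha(\pm1)$ precisely when $\eta_\alpha=-1$. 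The crux — and the step I expect to be the main obstacle — is verifying that the canonical representative of $w_0$ does give $\eta_\alpha=-1$ for every $\alpha\in\Phi$ (and, for $\tw{2}\type{D}_n(q)$ and $\tw{3}\type{D}_4(q)$, tracking how $w_0$ interacts with the graph automorphism induced by $F$); this is the sign bookkeeping carried out in \cite[Lemma 6.1]{Spath09} and \cite[Lemma 3.2]{MalleSpathMcKay2}, and it is also exactly where the hypothesis on the type is used, since for the excluded root systems — notably type $\type A_n$ with $n\geq2$ and type $\type E_6$ — one has $w_0\neq-\mathrm{id}$ and the conclusion $V_1=V$ genuinely fails.
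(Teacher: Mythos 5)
Your proposal takes essentially the same route as the paper: the paper's entire justification is the sentence preceding the lemma, namely that the longest element of $\bg{W}$ is central in the stated cases together with a pointer to the proofs of \cite[Lemma 6.1]{Spath09} and \cite[Lemma 3.2]{MalleSpathMcKay2}, which is exactly where you send the one step you flag as the crux (that the canonical lift $v$ centralizes all of $V$ rather than just $V/H$, i.e.\ the sign bookkeeping $\eta_\alpha=-1$, plus the twisted-case variants). Your preliminary reductions — $F$ fixes $V$ pointwise in the split case, $v$ inverts $\bT$ and hence fixes the elementary abelian group $H$, and centrality of $w_0$ handles $V/H$ — are correct and simply make explicit what the paper delegates to those cited proofs.
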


If $\ell$ is a prime different than $p$, we will write $d_\ell(q)$ for the order of $q$ modulo $\ell$ if $\ell$ is odd, and the order of $q$ modulo $4$ if $\ell=2$.  The role of $T_1$ and $N_1$ will be important in the case $d_2(q)=2$, i.e. $q\equiv 3\pmod 4$. 

\subsection{Galois Automorphisms and Harish-Chandra Series}\label{sec:HCseries}

Let $\bg{L}\leqslant \bg{P}$ be an $F$-stable Levi subgroup and parabolic subgroup of $\bg{G}$, respectively, so that $L:=\bL^F$ is a split Levi subgroup in the parabolic subgroup $P:=\bg{P}^F$ of $G=\bG^F$.   Let $\la\in\irr(L)$ be a cuspidal character and write $W(\la):=N(L)_\la/L$, where $N(L):=(\norm_G(L)\cap N)L$. Recall that such a pair $(L, \la)$ is called a cuspidal pair for $G$, and the Harish-Chandra induced character $\HC_L^G(\la)$ is defined as $\Ind_P^G(\mathrm{Inf}_L^P(\la))$, where $\mathrm{Inf}_L^P(\la)$ denotes the inflation of $\la$ to a character of $P$, viewing $L$ as a quotient of $P$ by its unipotent radical.  We will write $\mathcal{E}(G, L,\la)\subseteq\irr(G)$ for the set of constituents of $\HC_L^G(\la)$, known as a Harish-Chandra series.

Using the fact that every cuspidal $\psi\in\irr(L)$ extends to $N(L)_\psi$ due to \cite{geck93} and \cite[Theorem 8.6]{lusztig84}, we will apply the concept of extension maps to the case of cuspidal characters of $L$ and $L\lhd N(L)$.  Thanks to this and the work of Howlett and Lehrer \cite{HowlettLehrer80, howlettlehrer83}, for a cuspidal pair $(L, \la)$ of $G$, we have a bijection between  $\mathcal{E}(G, L, \la)$ and $\irr(W(\la))$ induced by a bijection $\mathfrak{f}\colon \irr(\mathrm{End}_{\overline{\Q}G}(\HC_T^G(\la))\rightarrow \irr(W(\la))$, where by $\mathrm{End}_{\overline{\Q}G}(\HC_T^G(\la))$ we mean the endomorphism algebra of a fixed module affording $\HC_T^G(\la)$.  We write $\HC_L^G(\la)_\eta$ for the character of $G$ in $\mathcal{E}(G, L, \la)$ corresponding to $\eta\in\irr(W(\la))$.  In particular, every character of $G$ can be written in such a way.

Now, there is a subsystem $\Phi_\la\subseteq \Phi$ with simple roots $\Delta_\la\subseteq \Phi_\la\cap \Phi^+$ such that $W(\la)$ can be decomposed as a semidirect product $R(\la)\rtimes C(\la)$, where $R(\la)=\langle s_\alpha\mid \alpha\in\Phi_\la\rangle$ is a Weyl group with root system $\Phi_\la$ and $C(\la)$ is the stabilizer of $\Delta_\la$ in $W(\la)$. (See \cite[Section 10.6]{carter2} or \cite[Section 2]{HowlettLehrer80} for more details.) Here for $\alpha\in\Phi$, $s_\alpha$ is the corresponding reflection in $\bg{W}^F=N/T$ induced by the element $n_\alpha(-1)$ of $N$.  The root system $\Phi_\la$ is obtained as follows.  Set

\[\Phi^\flat=\{\alpha\in\Phi\setminus\Phi_L\mid w(\Delta_L\cup\{\alpha\})\subseteq\Delta\hbox{ for some $w\in W$ and }(w_0^Lw_0^{\alpha})^2=1\}.\]  Here $w_0^L, w_0^\alpha$ are the longest elements in $W(L):=N(L)/L$ and $\langle W(L), s_\alpha\rangle$, respectively, and $\Phi_L\subseteq\Phi$ is the root system of $W(L)$ with simple system $\Delta_L\subseteq\Delta$.  Then for $\alpha\in\Phi^\flat$, letting $L_\alpha$ denote the standard Levi subgroup of $G$ with simple system $\Delta_L\cup\{\alpha\}$, $L$ is a standard Levi subgroup of $L_\alpha$ and  $p_{\alpha,\la}\geq1$ is defined to be the ratio between the degrees of the two constituents of $\HC_L^{L_\alpha}(\la)$.  Then \[\Phi_{\la}=\{\alpha\in\Phi^\flat\mid s_\alpha\in W(\la) \hbox{ and } p_{\alpha,\la}\neq1\}.\]  

 Theorem 3.8 of \cite{SFgaloisHC} describes the  action of $\gal$ on $\irr(G)$ with respect to the parameters $(L, \la, \eta)$.   We restate it here for the convenience of the reader, although we first must establish the necessary notation. 
 
\textit{Notation for \prettyref{thm:GaloisAct}} Let $(L, \la)$ be a cuspidal pair for $G$. 
 Fix an extension map $\Lambda$ for cuspidal characters with respect to $L\lhd N(L)$, so that for each cuspidal $\psi\in\irr(L)$, we have $\Lambda(\psi)$ is an extension of $\psi$ to $N(L)_\psi$.  For $\sigma\in\gal$, note that $N(L)_\la=N(L)_{\la^\sigma}$, and define $\delta_{\la,\sigma}$ to be the linear character of $W(\la)$ such that $\Lambda(\la)^\sigma=\delta_{\la,\sigma}\Lambda({\la^\sigma})$, guaranteed by Gallagher's theorem \cite[Corollary 6.17]{isaacs}.  Further, let $\delta'_{\la,\sigma}\in\irr(W(\la))$ be the character such that $\delta'_{\la,\sigma}(w)=\delta_{\la,\sigma}(w)$ for $w\in C(\la)$ and $\delta'_{\la,\sigma}(w)=1$ for $w\in R(\la)$.  Let $\gamma_{\la, \sigma}$ be the function on $W(\la)$ such that $\gamma_{\la, \sigma}(w)=\frac{\sqrt{\ind(w_c)}^\sigma}{\sqrt{\ind(w_c)}}$ where $w=w_rw_c$ for $w_c\in C(\la)$ and $w_r\in R(\la)$.  
 
 For $\eta\in\irr(W(\la))$, we denote by $\eta^{(\sigma)}$ the character $\mathfrak{f}(\wt{\eta}^\sigma)$, where $\wt{\eta}\in\irr(\mathrm{End}_{\overline{\Q}G}(\HC_T^G(\la)))$ is such that $\mathfrak{f}(\wt{\eta})=\eta$.  (See \cite[Section 3.5]{SFgaloisHC}.) Note that this is not necessarily the same as $\eta^\sigma$, although we show below in \prettyref{sec:genericalg} that this is often the case.

\begin{theorem}{\cite[Theorem 3.8]{SFgaloisHC}}\label{thm:GaloisAct}
Let $\sigma\in\gal$, let $(L, \la)$ be a cuspidal pair for $G$, and keep the notation above. 
Let $\eta\in \irr(W(\la))$.  Then

\[\left(\HC_L^G(\la)_\eta\right)^\sigma=\HC_L^G(\lambda^\sigma)_{\eta'},\] where $\eta'\in\irr(W(\la))=\irr(W(\la^\sigma))$ is defined by $\eta'(w)=\gamma_{\la, \sigma}(w)\delta'_{\la,\sigma}(w^{-1})\eta^{(\sigma)}(w)$ for each $w\in W(\lambda)$.
\end{theorem}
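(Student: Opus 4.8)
The plan is to reduce the statement to a computation inside the Howlett--Lehrer endomorphism algebra attached to $(L,\la)$, and then to track precisely how the twist by $\sigma$ disturbs the three ingredients of that algebra: the cuspidal extension $\Lambda(\la)$, the index function $\ind$, and the specialization isomorphism defining $\mathfrak{f}$. First I would reduce: because $\HC_L^G=\Ind_P^G\circ\mathrm{Inf}_L^P$ and both inflation and induction commute with the natural action of $\sigma\in\gal$ on characters, one has $\left(\HC_L^G(\la)\right)^\sigma=\HC_L^G(\la^\sigma)$ as ordinary characters, hence $\mathcal{E}(G,L,\la)^\sigma=\mathcal{E}(G,L,\la^\sigma)$. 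All combinatorial data attached to $\la$ is moreover Galois-stable: $N(L)_\la=N(L)_{\la^\sigma}$ gives $W(\la)=W(\la^\sigma)$; applying $\sigma$ to $\HC_L^{L_\alpha}(\la)$ only permutes its two constituents, so the ratios $p_{\alpha,\la}$ are unchanged and hence $\Phi_\la=\Phi_{\la^\sigma}$, $R(\la)=R(\la^\sigma)$, $C(\la)=C(\la^\sigma)$. Thus for each $\eta\in\irr(W(\la))$ there is a \emph{unique} $\eta'\in\irr(W(\la))$ with $\left(\HC_L^G(\la)_\eta\right)^\sigma=\HC_L^G(\la^\sigma)_{\eta'}$; only the identification of $\eta'$ remains.

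Next I would bring in the endomorphism algebra. Fix a $\overline{\Q}G$-module $M$ affording $\HC_L^G(\la)$ and set $\mathcal{A}:=\End_{\overline{\Q}G}(M)$, the algebra underlying $\mathfrak{f}$. Then $M^\sigma$ affords $\HC_L^G(\la^\sigma)$ and $\mathcal{A}^\sigma=\End_{\overline{\Q}G}(M^\sigma)$ is $\mathcal{A}$ with structure constants transformed by $\sigma$. By \cite{HowlettLehrer80, howlettlehrer83} (cf. \cite{geck93}) $\mathcal{A}$ has a standard basis $(B_w)_{w\in W(\la)}$: the $B_{w_r}$ with $w_r\in R(\la)$ satisfy Iwahori--Hecke relations with parameters $p_{\alpha,\la}$, while the $B_{w_c}$ with $w_c\in C(\la)$ span a twisted group algebra of $C(\la)$ with trivial $2$-cocycle, whose normalization encodes the cuspidal extension $\Lambda(\la)$ together with the square roots $\sqrt{\ind(w_c)}$ used to symmetrize the canonical form. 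The bijection $\mathfrak{f}$ comes from the Tits-deformation isomorphism $\mathcal{A}\cong\overline{\Q}\,W(\la)$, which depends on auxiliary choices (in particular of square roots of powers of $q$) and is therefore not $\gal$-equivariant; its residual non-equivariance is, by definition, exactly the passage $\eta\mapsto\eta^{(\sigma)}=\mathfrak{f}(\wt{\eta}^\sigma)$.

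Then I would compare the $\sigma$-twisted standard basis $(B_w^\sigma)_w$ of $\mathcal{A}$ with the Howlett--Lehrer basis built directly from $(L,\la^\sigma)$ using $\Lambda(\la^\sigma)$. On the $R(\la)$-part the Hecke parameters are $\sigma$-fixed integers, so the only change there is the one already recorded in $\eta^{(\sigma)}$. On the $C(\la)$-part two genuine discrepancies appear. First, Gallagher's theorem \cite[Corollary 6.17]{isaacs} gives $\Lambda(\la)^\sigma=\delta_{\la,\sigma}\Lambda(\la^\sigma)$, so the $C(\la)$-basis elements differ by the scalars $\delta_{\la,\sigma}(w_c)$; inflating $\delta_{\la,\sigma}|_{C(\la)}$ to $W(\la)$ along $W(\la)/R(\la)=C(\la)$ yields $\delta'_{\la,\sigma}$, which, after transport through $\mathfrak{f}$, enters the new label via its contragredient, i.e. as $\delta'_{\la,\sigma}(w^{-1})$. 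Second, $\sqrt{\ind(w_c)}^\sigma=\gamma_{\la,\sigma}(w)\sqrt{\ind(w_c)}$ by the very definition of $\gamma_{\la,\sigma}$, so the symmetrizing square roots rescale the $C(\la)$-basis by $\gamma_{\la,\sigma}(w)$. Combining, the $\sigma$-twist of the irreducible representation of $\mathcal{A}$ labelled by $\wt{\eta}$, reinterpreted as a representation of the endomorphism algebra of $\HC_L^G(\la^\sigma)$, corresponds under $\mathfrak{f}$ to the class function $w\mapsto\gamma_{\la,\sigma}(w)\,\delta'_{\la,\sigma}(w^{-1})\,\eta^{(\sigma)}(w)$; by the reduction step this class function must be the irreducible character $\eta'$, and the theorem follows.

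The argument stands or falls on executing the last step inside the Howlett--Lehrer construction with complete care: one must confirm that it is precisely the $C(\la)$-component $w_c$ of $w$ that carries \emph{both} the $\delta$- and the $\sqrt{\ind}$-corrections (and that the $R(\la)$-component introduces nothing beyond $\eta^{(\sigma)}$), that the relevant $2$-cocycle can be trivialized compatibly with the $\sigma$-twist, and---most delicately---that the inverse attaches to $\delta'_{\la,\sigma}$ rather than to $\gamma_{\la,\sigma}$ or $\eta^{(\sigma)}$; these inverse/normalization conventions are the genuinely subtle point. A secondary, purely bookkeeping issue is to make $\eta^{(\sigma)}$ well defined independently of $M$ and of the specialization isomorphism, which is handled by fixing all such choices once and for all, as in \cite[Section 3.5]{SFgaloisHC}.
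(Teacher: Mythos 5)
The paper does not actually prove this statement: as it says immediately before the theorem, it ``restate[s] it here for the convenience of the reader'' from \cite[Theorem 3.8]{SFgaloisHC}. There is therefore no in-paper proof to compare against. That said, your reconstruction is consistent with the framework the present paper lays out in Sections~\ref{sec:gamma}--\ref{sec:genericalg}, which are precisely devoted to isolating and studying the three correction characters $\gamma_{\la,\sigma}$, $\delta'_{\la,\sigma}$, and the map $\eta\mapsto\eta^{(\sigma)}$: you reduce correctly via the observation that $\HC_L^G$ commutes with the $\gal$-action (so $\mathcal{E}(G,L,\la)^\sigma=\mathcal{E}(G,L,\la^\sigma)$ and the indexing data $W(\la),R(\la),C(\la),\Phi_\la$ are $\sigma$-stable), and you correctly attribute the residual non-equivariance to (i) the specialization isomorphism $\mathfrak{f}$ of the generic algebra, which is the content of $\eta^{(\sigma)}$ as defined in the paper's Section~\ref{sec:genericalg}; (ii) the cuspidal extension $\Lambda(\la)$ versus $\Lambda(\la^\sigma)$, producing $\delta_{\la,\sigma}$ via Gallagher and then $\delta'_{\la,\sigma}$ as its modification trivial on $R(\la)$ (a step the paper makes rigorous in \prettyref{lem:extRla} by showing $R(\la)\leq\ker\delta_{\la,\sigma}$); and (iii) the symmetrizing square roots $\sqrt{\ind(w_c)}$, producing $\gamma_{\la,\sigma}$ as in \prettyref{lem:SFT20Lemma3.11}.

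The one place where you are, appropriately, candid about not having fully nailed things down is the sign/inverse bookkeeping --- why the contragredient attaches to $\delta'_{\la,\sigma}$ rather than to $\gamma_{\la,\sigma}$ or $\eta^{(\sigma)}$. This is genuinely where the work lies in \cite{SFgaloisHC}, and a complete argument must go into the explicit Howlett--Lehrer normalization of the standard basis $\{B_w\}$ (the $C(\la)$-part is constructed from $\Lambda(\la)$-intertwiners and $\ind$-normalizations, and the specialization compares two such normalizations). Your sketch records the right invariants and the right structure of the argument, but as written it would not suffice as a standalone proof without carrying out that basis comparison explicitly; in particular the claim that ``the $R(\la)$-component introduces nothing beyond $\eta^{(\sigma)}$'' needs the fact that the Hecke parameters $p_{\alpha,\la}$ are rational and $\sigma$-fixed, which you do note. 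Since the paper itself outsources this verification to \cite{SFgaloisHC}, I cannot flag a divergence from ``the paper's proof''; within the information available here, your route is the expected one and the checkpoints you identify are the right ones.
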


\prettyref{thm:GaloisAct} shows that the characters $\gamma_{\la,\sigma}$, $\delta'_{\la,\sigma}$, and $\eta^{(\sigma)}$ are the obstructions to $\gal$ acting on the parameters $(L, \la, \eta)$ in an equivariant way.  We discuss these characters in some detail below in Sections \ref{sec:gamma}, \ref{sec:extnmap}, and \ref{sec:genericalg}, respectively.

\subsection{Lusztig Series and Odd-Degree Characters}
We will write $G^\ast$ for the dual $G^\ast={\bG^\ast}^{F^\ast}$ of $G$, where $(\bG^\ast, F^\ast)$ is dual to $(\bG, F)$ as in, for example, \cite[Section 4.2]{carter2}.   Let $\bT^\ast$ be an $F^\ast$-stable maximal torus of $\bG^\ast$ dual to $\bT$.  Write $T=\bT^F$ and $T^\ast=(\bT^\ast)^{F^\ast}$.   We write $W=\bg{W}^F$, where $\bg{W}=\norm_\bg{G}(\bg{T})/\bg{T}$, and similarly for $W^\ast$.  This duality induces an isomorphism $\irr(T)\rightarrow T^\ast$ (see, for example, \cite[Proposition 4.4.1]{carter2}).  For $\la\in\irr(T)$, the following lemma allows us to view $W(\la)$ and $R(\la)$ as the groups $W(s)$ and $W^\circ(s)$ for a certain semisimple element $s$, where we write $W(s)$ and $W^\circ(s)$ for the fixed points of the Weyl groups of $\cen_{\bG^\ast}(s)$ and $\cen_{\bG^\ast}^\circ(s)$, respectively, under $F^\ast$, and implies that $C(\la)$ is isomorphic to a subgroup of $\left(\zen(\bG)/\zen(\bG)^\circ\right)^F$.  (See also \cite[Lemma 3.8]{SFT20}.)

\begin{lemma}\label{lem:W(s)}{\cite[Lemma 4.5]{SFgaloisHC} }
Let $\la\in\irr(T)$ and let $s\in T^\ast$ correspond to $\lambda$ in the sense of \cite[Proposition 4.4.1]{carter2}.   Then 
 $W(\la)$ is isomorphic to $W(s)$. 
Further, if $\bG$ is simple of simply connected type, not of type $\type{A}_n$, then there is an isomorphism $\kappa\colon W(\la)\rightarrow W(s)$ such that $\kappa(R(\la))=W^\circ(s)$.  In particular, in this case $W(\la)/R(\la)$ is isomorphic to $(\cen_{\bG^\ast}(s)/\cen_{\bG^\ast}^\circ(s))^{F^\ast}$.
\end{lemma}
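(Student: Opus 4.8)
The plan is to transport the whole statement to the dual group $\bG^\ast$, where it becomes a combinatorial statement about reflections, reinforced by one rank-one Harish--Chandra computation.

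First I would dispose of the isomorphism $W(\la)\cong W(s)$. Since $L=\bT^F=T$ is a maximal torus, $\la$ is automatically cuspidal, $N(L)=N$, and $W(\la)=N_\la/T$ is simply the stabilizer $(\bg{W}^F)_\la$ of $\la$ in $W=\bg{W}^F$. The duality isomorphism $\irr(T)\to T^\ast$ of \cite[Proposition 4.4.1]{carter2} intertwines the action of $\bg{W}^F$ on $\irr(T)$ with the action of $\bg{W}^{\ast F^\ast}$ on $T^\ast$ under the canonical (anti-)isomorphism $\bg{W}\cong\bg{W}^\ast$, so it carries $W(\la)$ isomorphically onto $\{w\in\bg{W}^{\ast F^\ast}\colon{}^ws=s\}$. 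Because $s\in\bT^\ast\leqslant\cen_{\bG^\ast}(s)$, one has $\norm_{\cen_{\bG^\ast}(s)}(\bT^\ast)/\bT^\ast=\{w\in\bg{W}^\ast\colon{}^ws=s\}$, and taking $F^\ast$-fixed points of this identifies it with both $W(s)$ and the stabilizer just described. This settles the first assertion, with no hypothesis on the type.

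For the refined statement I would track reflections through this isomorphism, now using that $\bG$ is simply connected and not of type $\type{A}_n$. As $L$ is a torus we have $\Phi^\flat=\Phi$, so $\Phi_\la=\{\alpha\in\Phi\colon s_\alpha\in W(\la),\ p_{\alpha,\la}\neq1\}$ and $R(\la)=\langle s_\alpha\mid\alpha\in\Phi_\la\rangle$; dually, $W^\circ(s)$ is the group of $F^\ast$-fixed points of $W_{\Phi_s}=\langle s_\beta\mid\beta\in\Phi_s\rangle$, where $\Phi_s=\{\beta\in\Phi^\vee\colon\beta(s)=1\}$ is the root system of $\cen_{\bG^\ast}^\circ(s)$. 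Under the duality isomorphism $s_\alpha$ goes to $s_{\alpha^\vee}$, where $\alpha^\vee$ is a root of $\bG^\ast$, and the identity ${}^{s_{\alpha^\vee}}s=s\cdot\alpha(\alpha^\vee(s))^{-1}$ (with $\alpha$ read as a cocharacter of $\bT^\ast$, namely the coroot of $\alpha^\vee$) shows that $s_\alpha$ fixes $\la$ exactly when $\alpha^\vee(s)$ lies in the kernel of that cocharacter; since every root of a simply connected group is at most $2$-divisible as a cocharacter of $\bT^\ast$, this means $s_\alpha\in W(\la)$ forces $\alpha^\vee(s)\in\{1,-1\}$. To decide whether such an $\alpha$ lies in $\Phi_\la$ I would then compute $p_{\alpha,\la}$ inside the semisimple-rank-one standard Levi $L_\alpha$: when $s_\alpha\la=\la$ the character $\HC_T^{L_\alpha}(\la)$ has exactly two constituents, and their degree ratio $p_{\alpha,\la}$ is a nontrivial power of $q$ when $\alpha^\vee(s)=1$ but equals $1$ when $\alpha^\vee(s)=-1$ --- the latter being the splitting of $\HC_T^G(-1_T)$ into two characters of equal degree familiar from $\SL_2$. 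Hence $\alpha\in\Phi_\la$ if and only if $\alpha^\vee\in\Phi_s$, and the duality isomorphism restricts to an isomorphism $\kappa\colon W(\la)\to W(s)$ with $\kappa(R(\la))=W_{\Phi_s}^{F^\ast}=W^\circ(s)$.

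The ``in particular'' clause then follows by passing to quotients, $W(\la)/R(\la)\cong W(s)/W^\circ(s)$, together with the standard fact that $W(s)/W^\circ(s)\cong(\cen_{\bG^\ast}(s)/\cen_{\bG^\ast}^\circ(s))^{F^\ast}$ (which comes from the connectedness of $\cen_{\bG^\ast}^\circ(s)$ and the decomposition of $W(s)$ into $W^\circ(s)$ and the $F^\ast$-fixed part of the stabilizer of a base of $\Phi_s$, dual to the Howlett--Lehrer decomposition $W(\la)=R(\la)\rtimes C(\la)$); incidentally this also identifies $C(\la)$ with that $F^\ast$-fixed stabilizer, which is the source of the embedding of $C(\la)$ into $(\zen(\bG)/\zen(\bG)^\circ)^F$. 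I expect the crux to be the middle step, namely the uniform verification of the root-level dictionary $\Phi_\la\leftrightarrow\Phi_s$: the condition $p_{\alpha,\la}\neq1$ in the definition of $\Phi_\la$ is indispensable here because in type $\type{C}_n$ there genuinely are long-root reflections with $\alpha^\vee(s)=-1$ lying in $W(\la)\setminus R(\la)$, which must be matched with $W(s)\setminus W^\circ(s)$; and the exclusion of type $\type{A}_n$ is what keeps this dictionary (and the ensuing structure of $W(\la)$) uniform, type $\type{A}$ being more naturally handled through the $\mathrm{GL}$-theory, where these relative Weyl groups and their Hecke-algebra parameters are classical.
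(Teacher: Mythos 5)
The paper does not reprove this lemma; it cites it directly from \cite[Lemma 4.5]{SFgaloisHC}, so there is no in-text proof to compare against. Evaluated on its own terms, your argument is sound and is the expected one: transport everything to $\bG^\ast$ via the duality $\irr(T)\cong T^\ast$, identify $W(\la)=W_\la$ with the $F^\ast$-fixed stabilizer of $s$ in $\bg{W}^\ast$, and then establish the root-level dictionary $\alpha\in\Phi_\la\iff\alpha^\vee\in\Phi_s$ to match $R(\la)$ with $W^\circ(s)$. The two key ingredients you supply for the dictionary are both correct: (i) the divisibility bound on roots in the weight lattice of a simply connected group follows immediately from $\langle\alpha,\alpha^\vee\rangle=2$, which forces $\ker(\alpha\colon\overline{\F}_q^\times\to\bT^\ast)\subseteq\{\pm1\}$ so that $s_\alpha\in W(\la)$ forces $\alpha^\vee(s)\in\{1,-1\}$; and (ii) the rank-one computation in $L_\alpha$, where $\la$ is trivial on $\alpha^\vee(\F_q^\times)$ precisely when $\alpha^\vee(s)=1$, giving the unipotent ratio $p_{\alpha,\la}=q$, versus the split quadratic principal series of $\SL_2(q)$ with $p_{\alpha,\la}=1$ when $\alpha^\vee(s)=-1$.

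Two small points are worth making explicit if you write this up. First, when passing to quotients in the last clause you are quietly using that $W^\circ(s)=W(s)\cap W_{\cen^\circ}$ (so that $W(s)/W^\circ(s)$ injects into $A(s)^{F^\ast}$) together with surjectivity onto $A(s)^{F^\ast}$; the latter follows from the $F^\ast$-stable semidirect decomposition of the full Weyl group of $\cen_{\bG^\ast}(s)$ coming from an $F^\ast$-stable base of $\Phi_s$ plus Lang's theorem for the connected centralizer, and you should say a word about why such a base exists. Second, your parenthetical observation about type $\type{C}_n$ — long roots with $\alpha^\vee(s)=-1$ lying in $W(\la)\setminus R(\la)$ — is the right sanity check, and it is exactly the mechanism that produces the nontrivial $C(\la)$ appearing throughout Sections \ref{sec:gamma}--\ref{sec:typeC} of the paper. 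The exclusion of type $\type{A}_n$ is also correctly motivated: there the roots can have kernel larger than $\{\pm1\}$ relative to $\bT^\ast$, and more importantly $\Phi_\la$ need not coincide with $\Phi_s$ under duality (the Hecke-algebra parameters behave differently for $\SL_n$), so the clean dictionary breaks down.
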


Recall that $\irr(G)$ may be partitioned into so-called \emph{rational Lusztig series} $\mathcal{E}(G, s)$ ranging over the $G^\ast$-classes of semisimple elements $s$ in $G^\ast$.  Each $\mathcal{E}(G,s)$ is further a disjoint union of Harish-Chandra series $\mathcal{E}(G, L,\la)$ satisfying $\la\in\mathcal{E}(L, s)$ (see \cite[11.10]{bonnafe06}).  The following results from \cite{MalleSpathMcKay2} describe the Lusztig and Harish-Chandra series containing odd-degree characters.

\begin{theorem}{\cite[Theorem 7.7]{MalleSpathMcKay2}}\label{thm:MS7.7}
Let $\bG$ be simple, of simply connected type, not of type $\type{A}_n$.  Let $\chi \in\irr_{2'}(G)$. Then either $\chi$ lies in the principal series of
$G$, or $q\equiv 3\pmod{4}$, $G=\Sp_{2n}(q)$ with $n\geq 1$ odd, $\chi\in\mathcal{E}(G,s)$ with $\cen_{G^\ast}(s)=\type{B}_{2k}(q)\cdot\tw{2}{\type{D}}_{n-2k}(q).2$, where $0\leq k\leq (n-3)/2$ and $\chi$ lies in the Harish-Chandra series of a cuspidal character of degree $(q-1)/2$ of a Levi subgroup $\Sp_2(q)\times (q-1)^{n-1}$.\end{theorem}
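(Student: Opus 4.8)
The plan is to argue via Lusztig's Jordan decomposition of characters together with a case analysis over the non-$\type{A}$ types. Fix $\chi\in\irr_{2'}(G)$ and let $s\in G^\ast$ be a semisimple element with $\chi\in\mathcal{E}(G,s)$. Since $\bG$ is simply connected, $\bG^\ast$ is of adjoint type, so $\cen_{\bG^\ast}(s)$ may be disconnected; nevertheless Jordan decomposition (in the version valid for disconnected centralisers, suitably interpreted) gives a bijection $\mathcal{E}(G,s)\rightarrow\mathcal{E}(\cen_{G^\ast}(s),1)$ with $\chi(1)=[G^\ast:\cen_{G^\ast}(s)]_{p'}\cdot\psi(1)$ for the associated unipotent character $\psi$ of $\cen_{G^\ast}(s)$. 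Hence $2\nmid\chi(1)$ is equivalent to: (a) $\cen_{G^\ast}(s)$ contains a Sylow $2$-subgroup of $G^\ast$, and (b) $\psi(1)$ is odd. So I would first classify the semisimple classes $(s)$ of $G^\ast$ satisfying (a), and then, for each, identify the odd-degree unipotent characters $\psi$ of $\cen_{G^\ast}(s)$ and the $G$-Harish-Chandra series into which the corresponding $\chi$ falls.

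For (a) I would split on $d:=d_2(q)\in\{1,2\}$ and use the known $2$-local structure of groups of Lie type. When $d=1$ (so $q\equiv1\pmod4$) a Sylow $2$-subgroup of $G^\ast$ lies in the normaliser of the split maximal torus $\bT^\ast$, with $|G^\ast|_2=|(\bT^\ast)^{F^\ast}|_2\cdot|W|_2$; then (a) forces $\cen_{\bG^\ast}(s)$ to be, up to its component group (recall \prettyref{lem:W(s)}), a reductive subgroup whose Weyl group already carries a Sylow $2$-subgroup of $W$ — a short, explicit list in each type — and in particular $s$ is conjugate into $\bT^\ast$. When $d=2$ ($q\equiv3\pmod4$) one runs the same analysis relative to the twisted torus $\bT^{vF}$ and $N_1$, where now the $\Phi_2$-part carries the larger $2$-power; this is exactly the regime producing the exceptional family, since in type $\type{C}_n$ a maximal $\Phi_2$-torus of $\bG^\ast$ lies inside a subsystem subgroup of type $\tw{2}\type{D}$, so a disconnected centraliser $\type{B}_{2k}(q)\cdot\tw{2}\type{D}_{n-2k}(q).2$ can attain the full $2$-part, and one checks the $2$-adic valuations match precisely when $n$ is odd.

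For (b) and the Harish-Chandra series I would invoke the classification of unipotent characters (symbols in the classical centralisers, explicit tables for exceptional types) together with the compatibility of Jordan decomposition with Harish-Chandra series: if $\psi$ lies in the principal (trivial) series of $\cen_{G^\ast}(s)$ — automatic once $\cen_{\bG^\ast}(s)$ contains the split maximal torus — then $\chi$ lies in the principal series of $G$; and if $\psi$ is Harish-Chandra induced from a cuspidal unipotent character of a proper Levi $\bL^\ast$ of $\cen_{\bG^\ast}(s)$, then $\chi$ lies above the cuspidal character of the dual Levi $L$ of $G$ obtained by pairing $s$ with that cuspidal unipotent character. One then checks, type by type (type $\type{A}$ and the Suzuki/Ree cases being excluded), that the only configuration in which (a) and (b) hold while $\chi$ is not in the principal series is $G=\Sp_{2n}(q)$, $q\equiv3\pmod4$, $n$ odd, $\cen_{G^\ast}(s)=\type{B}_{2k}(q)\cdot\tw{2}\type{D}_{n-2k}(q).2$ with $0\leq k\leq(n-3)/2$: the minus-type factor $\tw{2}\type{D}_{n-2k}(q)$ forces some eigenvalues of $s$ to lie outside $\F_q^\times$, so $s$ is not $G^\ast$-conjugate into $\bT^\ast$, and dualising the cuspidal pair back through $\Sp_{2n}$ identifies $L$ as $\Sp_2(q)\times(q-1)^{n-1}$ carrying one of the two degree-$(q-1)/2$ cuspidal characters of $\Sp_2(q)=\SL_2(q)$ (which exist exactly when $q\equiv3\pmod4$, where $(q-1)/2$ is odd). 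Running this construction in reverse shows every such $s$ does contribute odd-degree characters, which completes the proof.

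The main obstacle is the breadth of the classical-type case analysis in the last two steps: one must control uniformly, for types $\type{B}$, $\type{C}$, $\type{D}$, $\tw{2}\type{D}$ (and by a finite computation for the exceptional types), (i) exactly which semisimple centralisers attain the full $2$-part of $G^\ast$ in each parity regime of $q$, (ii) the parity of unipotent-character degrees inside those centralisers — in particular ruling out odd-degree \emph{cuspidal} unipotent characters except in the one family — and (iii) the bookkeeping that turns ``$\psi$ in the principal series of $\cen_{G^\ast}(s)$'' into ``$\chi$ in the principal series of $G$'' and transports the exceptional cuspidal pair correctly across duality. The genuinely delicate point is (ii) intertwined with the $q\equiv3\pmod4$ torus structure, which is precisely what isolates $\Sp_{2n}(q)$ with $n$ odd and no other classical group.
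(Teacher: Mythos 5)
The paper does not prove this statement; it is quoted verbatim from \cite[Theorem 7.7]{MalleSpathMcKay2}, so there is no internal proof to compare your argument against. I will therefore assess your proposal on its own terms against what the cited proof has to accomplish.

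Your overall strategy — reduce via Jordan decomposition to the two conditions (a) $2\nmid[G^\ast:\cen_{G^\ast}(s)]$ and (b) $2\nmid\psi(1)$ for the associated unipotent character, split on $d_2(q)$, and then run a type-by-type analysis of which centralisers can contain a Sylow $2$-subgroup and which of their unipotent characters have odd degree — is sound and is consistent with how the original argument proceeds (compare the chain Lemma 7.4--7.7 of \cite{MalleSpathMcKay2}). You also correctly flag where the delicacy lives, in conditions (ii)--(iii) of your final paragraph.

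There is, however, a genuine error at the one step you do try to justify rather than defer, and it happens to be the step that isolates the exceptional family. You assert that the minus-type factor $\tw{2}\type{D}_{n-2k}(q)$ in $\cen_{G^\ast}(s)$ ``forces some eigenvalues of $s$ to lie outside $\F_q^\times$, so $s$ is not $G^\ast$-conjugate into $\bT^\ast$.'' But for $G=\Sp_{2n}(q)$ one has $G^\ast=\operatorname{SO}_{2n+1}(q)$ with trivial centre, and a disconnected centraliser of the displayed shape (with the extra $.2$) only occurs for $s^2=1$; so the eigenvalues of the relevant $s$ are all $\pm1\in\F_q^\times$, and your stated mechanism does not apply. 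The actual obstruction is not about eigenvalues at all but about the isometry type of the $(-1)$-eigenspace of the involution $s$: an involution lying in the split maximal torus $\bT^\ast$ decomposes as a sum of hyperbolic planes on its $(-1)$-eigenspace, which is necessarily of plus type, whereas the $s$ of the exceptional family has a minus-type $(-1)$-eigenspace (this is also where the congruence $q\equiv3\pmod4$ and the parity of $n$ enter, via the discriminant/Witt-index bookkeeping). This is the key computation you would need to carry out; as written your justification would ``prove'' the exceptional $s$ doesn't exist at all, since involutions always have $\F_q$-rational eigenvalues. The rest of your sketch — the degree formula, the compatibility of Jordan decomposition with Harish-Chandra series, and the transport of the cuspidal pair to $L=\Sp_2(q)\times(q-1)^{n-1}$ with the degree-$\tfrac{q-1}{2}$ cuspidal character — is plausible and correctly oriented, but remains at the level of a plan: the heavy type-by-type verification of (i)--(iii) is precisely the content of the cited theorem and would need to be executed in full.
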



\begin{lemma}{\cite[extension of Lemma 7.9]{MalleSpathMcKay2}}\label{lem:MS7.9}
 Let $\bG$ be simple, of simply connected type, not of type $\type{A}_n$. Let $\chi\in\irr_{2'}(G)$.  Then $\chi=\HC_T^G(\lambda)_\eta$, where $T$ is a maximally split torus of $G$, $\lambda\in\irr(T)$ is such that $2\nmid[W:W(\lambda)]$, and $\eta\in\irr_{2'}(W(\lambda))$, except possibly in the case  $\bG$ is type $\type{C}_n$ with $n$ odd and $q\equiv3\pmod{4}$.  In the latter case, $\chi$ may also be of the form $\chi=\HC_L^G(\lambda)_{\eta}$ with $(L,\lambda)$ as in \prettyref{thm:MS7.7}, $2\nmid[W(L):W(\lambda)]$ and $\eta\in\irr_{2'}(W(\lambda))$.
\end{lemma}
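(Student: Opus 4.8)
The plan is to derive the lemma from \prettyref{thm:MS7.7} together with the Howlett--Lehrer parametrization and the generic degree formula for Harish-Chandra series; the whole statement is really a claim about $2$-adic valuations of character degrees. By \prettyref{thm:MS7.7} there are only two cases: either $\chi$ lies in the principal series of $G$, or $\bG$ is of type $\type{C}_n$ with $n$ odd and $q\equiv 3\pmod 4$ and $\chi$ lies in $\mathcal{E}(G,L,\la)$ for the cuspidal pair $(L,\la)$ described there, with $L\cong\Sp_2(q)\times(q-1)^{n-1}$ and $\la(1)=(q-1)/2$. I would handle these two cases separately.

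Suppose first that $\chi$ lies in the principal series. Then the theory recalled in \prettyref{sec:HCseries} gives $\chi=\HC_T^G(\la)_\eta$ for a maximally split torus $T$ of $G$, some (automatically cuspidal) $\la\in\irr(T)$, and some $\eta\in\irr(W(\la))$. Since $\HC_T^G(\la)$ has degree $[G:B]$ and decomposes along $\irr(W(\la))$ with multiplicities $\eta'(1)$, the generic degree formula in $\mathrm{End}_{\overline{\Q}G}(\HC_T^G(\la))$ writes $\chi(1)$ in terms of $[G:B]_{p'}$, the index $[W:W(\la)]$, and the generic degree of $\eta$, in such a way that $\chi(1)$ being odd forces $2\nmid[W:W(\la)]$ (equivalently, a Sylow $2$-subgroup of $W$ lies in $W(\la)$) and $2\nmid\eta(1)$, i.e. $\eta\in\irr_{2'}(W(\la))$. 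This is the argument of \cite[Lemma 7.9]{MalleSpathMcKay2} and already settles this case.

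Now suppose $\bG$ is of type $\type{C}_n$ with $n$ odd, $q\equiv 3\pmod 4$, and $\chi\in\mathcal{E}(G,L,\la)$ with $(L,\la)$ as in \prettyref{thm:MS7.7}. Since $\la$ is cuspidal, the relative Howlett--Lehrer theory for $L\lhd N(L)$ gives $\chi=\HC_L^G(\la)_\eta$ for some $\eta\in\irr(W(\la))$, and it remains to prove $2\nmid[W(L):W(\la)]$ and $\eta\in\irr_{2'}(W(\la))$. The key observation is that $q\equiv 3\pmod 4$ makes $(q-1)/2$ \emph{odd}, so $\la(1)$ contributes no $2$-part to $\HC_L^G(\la)(1)=[G:P]\la(1)$; I would then repeat the valuation argument of the previous paragraph with $(G,B,W)$ replaced by $(G,P,W(L))$, so that $\chi(1)$ being odd again forces the $2$-part to be carried entirely by $[W(L):W(\la)]$ and by the odd part of $\eta(1)$. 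As a consistency check one can instead pass through Jordan decomposition, writing $\chi(1)=\frac{|G|_{p'}}{|\cen_{G^\ast}(s)|_{p'}}\psi(1)$ for a unipotent character $\psi$ of $\cen_{G^\ast}(s)=\type{B}_{2k}(q)\cdot\tw{2}{\type{D}}_{n-2k}(q).2$, which is then forced to have odd degree, and reading off the structure of $W(L)$ and $W(\la)$ from the combinatorics of this centralizer together with its cuspidal unipotent datum.

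The step I expect to be the main obstacle is this last one. Unlike in the principal series, \prettyref{lem:W(s)} does not identify $W(\la)$ here --- it is stated only for $\la\in\irr(T)$ --- so for the larger Levi $L$ one must determine $W(L)$ and $W(\la)$ directly: $W(L)$ is the relative Weyl group of the long-root Levi of type $\type{C}_n$, while $W(\la)$ is the stabilizer of $\la$ inside it, governed by the cuspidal unipotent datum in $\cen_{G^\ast}(s)$. Verifying that $[W(L):W(\la)]$ is odd --- equivalently, pinning down the $2$-part of the relevant generic and unipotent degrees --- is where the actual work of the extension lies; the principal-series half is in essence quoted from \cite{MalleSpathMcKay2}.
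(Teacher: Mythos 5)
The paper does not actually supply a proof of this lemma; it cites it as an ``extension of Lemma 7.9'' in \cite{MalleSpathMcKay2}, so there is no written argument to compare against. Your overall plan is the natural one: split into the two cases supplied by \prettyref{thm:MS7.7}, quote the original Lemma 7.9 for the principal-series case, and redo the $2$-adic valuation argument for the exceptional Harish--Chandra series of type $\type{C}_n$ ($n$ odd, $q\equiv 3\pmod 4$), using the crucial fact that $\la(1)=(q-1)/2$ is odd.

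The gap is that you never carry out that valuation argument in the exceptional case; you explicitly defer it (``pinning down the $2$-part of the relevant generic and unipotent degrees\ldots\ is where the actual work of the extension lies''). To promote the sketch to a proof you would need to (i) identify the relative Weyl groups $W(L)=N(L)/L$ and $W(\la)$ for the given Levi $L\cong\Sp_2(q)\times(q-1)^{n-1}$ together with the Hecke-algebra parameters $p_{\alpha,\la}$, and (ii) verify that the Howlett--Lehrer degree formula for $\HC_L^G(\la)_\eta$, together with $\la(1)$ odd, really does force $[W(L):W(\la)]$ and $\eta(1)$ to both be odd. Neither step is done. You also wave at the Jordan-decomposition reformulation only as a ``consistency check,'' but that is arguably the cleaner route here: since $\cen_{G^\ast}(s)=\type{B}_{2k}(q)\cdot\tw{2}\type{D}_{n-2k}(q).2$ is explicit and one knows which unipotent Harish--Chandra series of this centralizer have odd degree, one can read off the relative Weyl group data directly --- at the cost, as you note, of an analogue of \prettyref{lem:W(s)} beyond the torus case. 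A minor stylistic point: the phrase ``forces the $2$-part to be carried entirely by $[W(L):W(\la)]$ and by the odd part of $\eta(1)$'' is not quite coherent; what you mean is that the factorisation of $\chi(1)$ forces each of $[W(L):W(\la)]$ and $\eta(1)$ to be odd.
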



The following is \cite[Lemma 7.5]{MalleSpathMcKay2}, extended to some additional exceptional groups.
\begin{lemma}\label{lem:sinvol}
Let $G=\bG^F$ where $\bG$ is simple of simply connected type, not of type $\type{A}_n$, $\type{C}_n$, $\type{E}_6$, or $\type{D}_{2n+1}$, and such that $F$ is a Frobenius endomorphism defining $G$ over $\F_q$ with $q$ odd.  Then if $\chi\in\irr_{2'}(G)$ lies in the rational series $\mathcal{E}(G,s)$, then $s^2=1$. 
\end{lemma}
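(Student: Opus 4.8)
The plan is to translate the statement into a claim about the semisimple element attached to $\chi$ and then settle that claim type by type, invoking \cite[Lemma 7.5]{MalleSpathMcKay2} for the classical groups and the known classification of semisimple classes (equivalently, of the reductive subgroups of maximal rank of the dual group) for the exceptional ones.

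First I would apply \prettyref{lem:MS7.9} --- available since $\bG$ is of neither type $\type{A}_n$ nor type $\type{C}_n$ --- to write $\chi=\HC_T^G(\lambda)_\eta$ with $T$ a maximally split torus of $G$, $\lambda\in\irr(T)$ satisfying $2\nmid[W:W(\lambda)]$, and $\eta\in\irr_{2'}(W(\lambda))$. Let $s\in T^\ast$ correspond to $\lambda$ under the duality $\irr(T)\to T^\ast$. Then $\lambda\in\mathcal{E}(T,s)$, and since each rational Lusztig series is a disjoint union of Harish--Chandra series attached to cuspidal pairs lying in it, we get $\chi\in\mathcal{E}(G,s)$; so it suffices to show $s^2=1$. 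By \prettyref{lem:W(s)}, $W(\lambda)$ is isomorphic to $W(s)$, so the hypothesis becomes $2\nmid[W:W(s)]$, i.e.\ $W(s)$ contains a Sylow $2$-subgroup of $W$. Moreover $W^\circ(s)$ is the $F^\ast$-fixed subgroup of the Weyl group of the maximal-rank subgroup $\cen_{\bG^\ast}^\circ(s)$, while $W(s)/W^\circ(s)$ embeds into $(\zen(\bG)/\zen(\bG)^\circ)^F$, which --- for the groups not already handled by \cite{MalleSpathMcKay2} --- is trivial unless $\bG$ is of type $\type{E}_7$, in which case it has order $2$.

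For $\bG$ classical (of type $\type{B}_n$ or $\type{D}_n$, the latter including $G=\tw{2}\type{D}_n(q)$), the implication $2\nmid[W:W(s)]\Rightarrow s^2=1$ follows as in the proof of \cite[Lemma 7.5]{MalleSpathMcKay2}. For the remaining groups --- $\type{G}_2$, $\type{F}_4$, $\tw{3}\type{D}_4$, $\type{E}_7$, $\type{E}_8$ --- I would run through the maximal-rank subsystems of the (twisted) root system of $\bG^\ast$ produced by the Borel--de Siebenthal algorithm and compute the $2$-part of the order of the associated (twisted) Weyl group. Aside from the full system, one finds at most one subsystem per type whose Weyl group still contains a Sylow $2$-subgroup of $W$ --- up to the harmless factor of $2$ contributed by a disconnected centralizer in type $\type{E}_7$ --- and in each case this subsystem is the root system of the connected centralizer of an involution (of type $\type{A}_1\type{A}_1$ in $\type{G}_2$, of type $\type{B}_4$ in $\type{F}_4$, of type $\type{D}_8$ in $\type{E}_8$, of type $\type{A}_1\type{D}_6$ in $\type{E}_7$, and the analogous subsystem for $\tw{3}\type{D}_4$). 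Matching this against the known classification of semisimple conjugacy classes of these groups --- the tables of Deriziotis--Michler for $\tw{3}\type{D}_4(q)$, and for the others the list of maximal-rank subgroups together with the element orders they impose --- shows that every $s$ with $2\nmid[W:W(s)]$ is either $1$ or such an involution, whence $s^2=1$.

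The reduction and the $2$-part bookkeeping are routine; I expect the main obstacle to be type $\type{E}_7$, where one must treat the disconnected centralizers in the adjoint group $\bG^\ast$, checking that the semisimple elements accounting for the extra factor of $2$ in $[W:W^\circ(s)]$ are precisely certain involutions of $\bG^\ast$ --- namely those not lifting to involutions of the simply connected cover --- and that no semisimple element of order greater than $2$ with a disconnected centralizer of the relevant type lies in the maximally split torus. The case $\tw{3}\type{D}_4$ calls for analogous care: the triality twist alters both $W$ and the list of $F^\ast$-stable subsystems, so the argument must be read off the explicit class list rather than from the untwisted combinatorics.
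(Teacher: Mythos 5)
Your proposal is correct and takes essentially the same route as the paper: reduce to the condition $2\nmid[W:W(s)]$, cite \cite[Lemma 7.5]{MalleSpathMcKay2} for the classical types, and settle the remaining exceptional types by inspecting the known lists of semisimple centralizer structures in $G^\ast$ (Lübeck's tables, Deriziotis--Michler for $\tw{3}\type{D}_4$), observing that the only non-central $s$ with $2\nmid[W:W(s)]$ are involutions with the specific centralizer types you name. Two small divergences worth noting: the paper obtains the index condition more directly from the degree formula (odd degree forces $s$ to centralize a Sylow $2$-subgroup of $G^\ast$) rather than routing through \prettyref{lem:MS7.9} and \prettyref{lem:W(s)}; and it disposes of $\type{E}_7$ by simply citing \cite[Lemma 7.5]{MalleSpathMcKay2} alongside $\type{B}_n$ and $\type{D}_{2n}$, which sidesteps entirely the disconnected-centralizer bookkeeping you flag as your main anticipated obstacle — that care is already packaged in the cited lemma, so your plan would have worked but is doing unnecessary labor there.
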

\begin{proof}
Recall that $\chi\in\irr_{2'}(G)$ implies that $s$ centralizes a Sylow $2$-subgroup of $G^\ast$.  Then for types $\type{B}_n, \type{D}_{2n}, \type{E}_7$, this is \cite[Lemma 7.5]{MalleSpathMcKay2}.  For the remaining cases, the statement can be obtained by analyzing the possible centralizer structures of $G^\ast$, which are listed for example at \cite{luebeckwebsite}, or in \cite{deriziotismichler} for type $\tw{3}\type{D}_4$.  In particular, if $s$ is non-central, then we have $2\mid [W:W(s)]$ unless $\cen_{G^\ast}(s)$ is of type
$\type{A}_1(q)+\type{A}_1(q)$ if $G=\type{G}_2(q)$; $\type{B}_4(q)$ if  $G=\type{F}_4(q)$;
 $\type{D}_8(q)$ if $G=\type{E}_8(q)$;
and $\type{A}_1(q^3)+\type{A}_1(q)$ if $G=\tw{3}\type{D}_4(q)$, in which cases $s^2=1$.
\end{proof}

\section{Square Roots and the Character $\gamma_{\la,\sigma}$}\label{sec:gamma}

Let $p$ be a prime and $q$ a power of $p$.  Keep the notation of \prettyref{thm:GaloisAct}. The following, found as \cite[Lemma 3.11]{SFT20}, shows that $\gamma_{\la,\sigma}$ is indeed a character of $W(\la)$ (and hence of $C(\la)$) and is closely related to the action of $\sigma\in\gal$ on $\sqrt{p}$.  We remark that here we do not require the assumption that $\bG$ is simply connected.

\begin{lemma}\label{lem:SFT20Lemma3.11}
Assume $(L, \la)$ is a cuspidal pair for a finite reductive group $G=\bG^F$ defined over $\F_q$ with $q$ a power of a prime $p$, and let $\sigma\in\gal$. Then in the notation of \prettyref{thm:GaloisAct},
$\gamma_{\la,\sigma}$ is a character of $W(\la)$, and is moreover trivial if and only if at least one of the following holds:
\begin{itemize}
\item $q$ is a square;
\item the length $l(w_c)$, with respect to the Weyl group of $\bG$, is even for each $w_c\in C(\la)$; or
\item $\sigma$ fixes $\sqrt{p}$.
\end{itemize}
Otherwise, $\gamma_{\la,\sigma}(w)=(-1)^{l(w_c)}$, where $w=w_rw_c$ with $w_r\in R(\la)$ and $w_c\in C(\la)$.
\end{lemma}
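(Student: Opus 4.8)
The plan is to unwind the definition $\gamma_{\la,\sigma}(w) = \sqrt{\ind(w_c)}^\sigma / \sqrt{\ind(w_c)}$ and reduce everything to the action of $\sigma$ on a single square root. Recall that $\ind(w_c)$ is a power of $q$, say $\ind(w_c) = q^{m(w_c)}$ for some nonnegative integer $m(w_c)$ (this is the standard index parameter attached to the generic Iwahori--Hecke algebra; here $m(w_c)$ is the length $l(w_c)$ with respect to the Weyl group of $\bG$, since the relevant reflections $s_\alpha$ with $\alpha \in \Delta_\la$ occur with parameter $q$). First I would record that $\sqrt{\ind(w_c)} = \sqrt{q}^{\,m(w_c)}$, so that $\gamma_{\la,\sigma}(w)$ depends only on $m(w_c) \bmod 2$ and on how $\sigma$ acts on $\sqrt{q}$.

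Next I would split into the relevant cases according to the parity of $m(w_c)$ and the action of $\sigma$ on $\sqrt{q}$. If $q$ is a square, then $\sqrt{q} \in \Z$ is fixed by every $\sigma \in \gal$, so $\gamma_{\la,\sigma} \equiv 1$; this handles the first bullet. If $l(w_c)$ is even for all $w_c \in C(\la)$, then $\sqrt{\ind(w_c)} = q^{l(w_c)/2}$ is a rational integer, hence $\sigma$-fixed, giving $\gamma_{\la,\sigma} \equiv 1$; this is the second bullet. If $\sigma$ fixes $\sqrt{p}$ then, writing $q = p^a$, we have $\sqrt{q} = p^{\lfloor a/2\rfloor}\sqrt{p}^{\,a \bmod 2}$, so $\sigma$ fixes $\sqrt{q}$ as well and again $\gamma_{\la,\sigma} \equiv 1$; this is the third bullet. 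Conversely, if none of the three conditions holds — so $q$ is a nonsquare, some $w_c$ has odd length, and $\sigma$ moves $\sqrt{p}$ — then since $q$ is a nonsquare $\sqrt{q} = p^{(a-1)/2}\sqrt{p}$ with $a$ odd, and moving $\sqrt{p}$ means $\sqrt{p}^{\,\sigma} = -\sqrt{p}$ (the only nontrivial Galois action on the quadratic field $\Q(\sqrt{p})$), hence $\sqrt{q}^{\,\sigma} = -\sqrt{q}$, and therefore $\gamma_{\la,\sigma}(w) = (\sqrt{q}^{\,\sigma}/\sqrt{q})^{m(w_c)} = (-1)^{m(w_c)} = (-1)^{l(w_c)}$, which is genuinely nontrivial since some $w_c$ has odd length. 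This simultaneously establishes the "otherwise" formula.

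It remains to check that $\gamma_{\la,\sigma}$, as defined on $W(\la)$ via the decomposition $w = w_r w_c$, is actually a group homomorphism $W(\la) \to \overline{\Q}^\times$ (equivalently a linear character of $W(\la)$, inflated from $C(\la) \cong W(\la)/R(\la)$). For this I would observe that $w \mapsto (-1)^{l(w_c)}$ and $w \mapsto 1$ are both of the form "inflate the sign-type character $w_c \mapsto (-1)^{l(w_c)}$ of $C(\la)$", and that $w_c \mapsto (-1)^{l(w_c)}$ is a homomorphism on $C(\la)$ because length mod $2$ is additive for the longest-element-type representatives used here and $C(\la)$ stabilizes $\Delta_\la$ (so the relevant lengths add); alternatively one can simply invoke that this is exactly the statement already proved in \cite[Lemma 3.11]{SFT20}, which is the source being cited, so the homomorphism property and the well-definedness of the $w_r w_c$ factorization are available. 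The main obstacle is bookkeeping: being careful that $\ind(w_c)$ really is an integral power of $q$ with the exponent equal to $l(w_c)$ (so that the parity statement in terms of Weyl-group length is correct), and that the square-root conventions for $\sqrt{\ind(w_c)}$ are chosen compatibly so that the ratio collapses to $(\sqrt{q}^{\,\sigma}/\sqrt{q})^{l(w_c)}$ rather than picking up an extra sign. Everything else is elementary Galois theory of $\Q(\sqrt{p})$.
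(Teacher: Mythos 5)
The paper does not prove this lemma; it is imported verbatim from \cite[Lemma 3.11]{SFT20}, and no independent argument appears here. Your reconstruction is, in substance, the same argument as the cited source: everything collapses to how $\sigma$ acts on $\sqrt{q}$, which in turn is elementary Galois theory in the quadratic field $\Q(\sqrt{p})$.

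Two points are worth tightening. First, your parenthetical justification of $\ind(w_c)=q^{l(w_c)}$ misfires: you appeal to "the relevant reflections $s_\alpha$ with $\alpha\in\Delta_\la$ occur with parameter $q$," but the parameters $p_{\alpha,\la}$ attached to $\alpha\in\Delta_\la$ govern $R(\la)$ (and are only \emph{powers} of $q$, not necessarily $q$), whereas $w_c$ lies in $C(\la)$, the complement, which is not a word in those reflections at all. What you actually need is the fact that the Howlett--Lehrer index function on $W(\la)$ is $\ind(w)=q^{l(w)}$ with $l$ the length in the ambient Weyl group $\bg{W}$; this is the definition (see \cite[Section 2]{HowlettLehrer80}), not something to be derived from the $p_{\alpha,\la}$. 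Second, the homomorphism property of $w\mapsto(-1)^{l(w_c)}$ does not require any delicate bookkeeping about representatives: $w\mapsto(-1)^{l(w)}$ is precisely the sign character of the Coxeter group $\bg{W}$, hence already a homomorphism; restricting it to $C(\la)$ and inflating through $W(\la)\twoheadrightarrow W(\la)/R(\la)\cong C(\la)$ produces a linear character of $W(\la)$ with no further work. With these two adjustments your case analysis is correct and matches the proof in \cite{SFT20}.
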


Let $\ell\neq p$ be another prime. 
In \cite[Section 4]{SFT20}, we discuss the action of $\galh_\ell$ on $\sqrt{p}$ (and on $\gamma_{\la,\sigma}$) when $p$ is odd.  We record the following observations, which extend the discussion to the case $p=2$.

\begin{lemma}\label{lem:sqrt}
Let $\ell$ be an odd prime and let $\sigma\in\galh_\ell$ with corresponding integer $r$ such that  $\sigma(\zeta)=\zeta^{\ell^r}$ for every $\ell'$ root of unity $\zeta$.   Let $p\neq \ell$ be a prime.  Then $\sqrt{p}^\sigma=\sqrt{p}$ if and only if $r$ is even or $p$ is a square modulo $\ell$.
\end{lemma}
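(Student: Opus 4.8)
The plan is to reduce the question to understanding the action of $\sigma$ on the quadratic Gauss sum associated to $p$, since it is classical that $\sqrt{p^*} \in \Q(\zeta_p)$, where $p^* = (-1)^{(p-1)/2}p$ when $p$ is odd and $\sqrt{2} \in \Q(\zeta_8)$ when $p=2$. Concretely, for $p$ odd the Gauss sum $g = \sum_{a} \legendre{a}{p}\zeta_p^a$ satisfies $g^2 = p^*$, and its transformation under $\sigma \in \galh_\ell$ is governed by the quadratic residue symbol: if $\sigma(\zeta_p) = \zeta_p^{\ell^r}$ (which holds since $p \neq \ell$ means $\zeta_p$ is an $\ell'$-root of unity), then $g^\sigma = \legendre{\ell^r}{p} g = \legendre{\ell}{p}^r g$. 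Thus $g^\sigma = g$ exactly when $\legendre{\ell}{p}^r = 1$, i.e. when $r$ is even or $\legendre{\ell}{p}=1$. The symbol $\legendre{\ell}{p}$ is $1$ iff $\ell$ is a square mod $p$, so already one sees the shape of the claimed criterion appearing — though with the roles of $\ell$ and $p$ apparently swapped.

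The first real step, then, is to pass from the "obvious" statement about $\sqrt{p^*}$ (equivalently about $g$) to the statement about $\sqrt{p}$ itself, and from $\legendre{\ell}{p}$ to $\legendre{p}{\ell}$. For this I would invoke quadratic reciprocity: $\legendre{\ell}{p}\legendre{p}{\ell} = (-1)^{\frac{\ell-1}{2}\cdot\frac{p-1}{2}}$. One must also account for the sign $(-1)^{(p-1)/2}$ hidden inside $p^*$ versus $p$, i.e. for the action of $\sigma$ on $\sqrt{-1} = i$, which is $\zeta_4$ and hence an $\ell'$-root of unity (as $\ell$ is odd). So $i^\sigma = i^{\ell^r} = (-1)^{(\ell^r - 1)/2}\, i$, and combining this with the Gauss-sum computation will convert the criterion "$\legendre{\ell}{p}^r = 1$" into a criterion on $p$ modulo $\ell$. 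The bookkeeping of signs here — tracking $(-1)^{(p-1)/2}$, $(-1)^{(\ell-1)/2}$, and the parity of $r$ simultaneously — is where I expect the argument to be most delicate, and it is worth doing the case split on $p$ odd versus $p = 2$ separately, since for $p = 2$ one works in $\Q(\zeta_8)$ with $\sqrt 2 = \zeta_8 + \zeta_8^{-1}$ and simply computes $(\zeta_8 + \zeta_8^{-1})^\sigma = \zeta_8^{\ell^r} + \zeta_8^{-\ell^r}$, which equals $\pm\sqrt 2$ according to $\ell^r \bmod 8$; one then checks this matches "$r$ even or $2$ a QR mod $\ell$" using that the squares mod $8$ are $\{1\}$ among odd residues, so $2$ is a QR mod $\ell$ iff $\ell \equiv \pm 1 \pmod 8$ iff $\ell^2 \equiv 1 \pmod{16}$, etc.

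Putting it together, the key steps in order are: (i) recall $\sqrt{p^*} \in \Q(\zeta_p)$ via the quadratic Gauss sum and record $g^\sigma = \legendre{\ell}{p}^r g$; (ii) handle the sign $\sqrt{-1}$ separately using $i^\sigma = (-1)^{(\ell^r-1)/2} i$, valid since $\ell$ odd; (iii) combine (i) and (ii) to express $\sqrt p^\sigma / \sqrt p$ as an explicit product of $\pm 1$'s; (iv) apply quadratic reciprocity to rewrite $\legendre{\ell}{p}$ in terms of $\legendre{p}{\ell}$, absorbing the reciprocity sign into the sign from (ii); (v) observe that the combined sign is trivial precisely when $r$ is even or $\legendre{p}{\ell} = 1$, i.e. $p$ is a square mod $\ell$; (vi) treat $p = 2$ by the direct computation in $\Q(\zeta_8)$ indicated above. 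The main obstacle is step (iv)–(v), i.e. verifying that the various independent sign contributions conspire to collapse exactly to the stated clean criterion rather than leaving a residual parity condition; a careful truth-table over the parities of $r$, $(p-1)/2$, and $(\ell-1)/2$ should close it.
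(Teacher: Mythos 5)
Your proposal is correct and, where the paper actually gives a proof, follows the same line. For $p=2$ the paper does precisely your computation with $\sqrt 2 = \zeta_8 + \zeta_8^{-1}$, reducing to whether $\ell^r \equiv \pm 1 \pmod 8$. For $p$ odd the paper simply cites \cite[Lemma 4.4]{SFT20}; the Gauss-sum plus quadratic-reciprocity argument you sketch is the standard proof behind such a statement, so you are unpacking the citation rather than taking a different route. Your sign bookkeeping in steps (iii)--(v) does close cleanly: for $r$ even, $\ell^r \equiv 1 \pmod 8$ so $(\ell^r-1)/2$ is even and $\legendre{\ell}{p}^r=1$, giving $\sqrt p^\sigma=\sqrt p$; for $r$ odd, $\ell^r\equiv\ell\pmod 8$ so $(\ell^r-1)/2$ and $(\ell-1)/2$ have the same parity, the reciprocity sign $(-1)^{\frac{\ell-1}{2}\frac{p-1}{2}}$ cancels the contribution from $i^\sigma$, and one is left with $\sqrt p^\sigma/\sqrt p = \legendre{p}{\ell}$, exactly the stated criterion.
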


\begin{proof}
For $p$ odd, this is \cite[Lemma 4.4]{SFT20}.
Note that $\sqrt{2}$ may be written $\sqrt{2}=\zeta_8+\zeta_8^{-1}$ for some fixed primitive $8$th root of unity $\zeta_8$. Then $\sqrt{2}^\sigma=\sqrt{2}$ if and only if $\ell^r$ is $\pm 1\pmod 8$.  This proves the claim, since the condition $\ell\equiv \pm1\pmod 8$ is equivalent to $2$ being square modulo $\ell$.
\end{proof}

\begin{corollary}\label{cor:maxsplitsqrt}
Keep the hypothesis of \prettyref{lem:SFT20Lemma3.11}, and  assume that $\ell\neq p$ is an odd prime dividing $q-1$.  Then $\Q(\sqrt{q})$ is contained in the fixed field of $\galh_\ell$ and the character $\gamma_{\la,\sigma}$ is trivial. 
\end{corollary}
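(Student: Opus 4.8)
The statement has two parts: that $\Q(\sqrt q)$ is contained in the fixed field of $\galh_\ell$, and that $\gamma_{\la,\sigma}$ is trivial for every $\sigma\in\galh_\ell$. The plan is to establish the first part directly and then read off the second from \prettyref{lem:SFT20Lemma3.11}. Write $q=p^a$. If $q$ is a perfect square, then the first bullet point of \prettyref{lem:SFT20Lemma3.11} already forces $\gamma_{\la,\sigma}=1$; if $q$ is not a square, then $a$ is odd and $\sqrt q=p^{(a-1)/2}\sqrt p$ with $p^{(a-1)/2}\in\Q^\times$, so once we know $\sqrt q^\sigma=\sqrt q$ we get $\sqrt p^\sigma=\sqrt p$, and the third bullet point of \prettyref{lem:SFT20Lemma3.11} gives $\gamma_{\la,\sigma}=1$. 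Thus the whole corollary reduces to showing $\sqrt q$ is fixed by $\galh_\ell$.

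To show $\sqrt q^\sigma=\sqrt q$ for $\sigma\in\galh_\ell$, write $q=p^a$ again. If $a$ is even there is nothing to prove since $\sqrt q=p^{a/2}\in\Q$, so I would assume $a$ is odd; then it suffices to show $\sqrt p^\sigma=\sqrt p$. By \prettyref{lem:sqrt} --- which is stated for every prime $p\neq\ell$, including $p=2$ --- this holds as soon as $p$ is a square modulo $\ell$. Here the hypothesis $\ell\mid q-1=p^a-1$ enters: it forces the multiplicative order $d$ of $p$ modulo $\ell$ to divide $a$, hence $d$ is odd. Writing $\ell-1=2^k m$ with $m$ odd and $k\geq1$ (possible since $\ell$ is odd), oddness of $d$ gives $d\mid m\mid (\ell-1)/2$, so $p^{(\ell-1)/2}=(p^{d})^{(\ell-1)/(2d)}\equiv1\pmod\ell$ and $p$ is a square modulo $\ell$. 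Equivalently, an element of odd order in the cyclic group $(\Z/\ell\Z)^\times$ of even order lies in its unique index-$2$ subgroup, which is exactly the subgroup of squares.

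I do not expect a genuine obstacle here: the corollary is a bookkeeping combination of \prettyref{lem:SFT20Lemma3.11}, \prettyref{lem:sqrt}, and the elementary arithmetic fact above. The only points needing a moment's attention are the bifurcation according to whether $q$ is a perfect square (so that, when it is not, $\sqrt q$ and $\sqrt p$ generate the same field over $\Q$) and the invocation of the $p=2$ case of \prettyref{lem:sqrt}, which makes the argument uniform in the defining characteristic.
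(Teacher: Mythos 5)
Your proposal is correct and follows essentially the same route as the paper: reduce to showing $\sqrt{p}$ is fixed by $\galh_\ell$ via \prettyref{lem:sqrt}, and deduce triviality of $\gamma_{\la,\sigma}$ from \prettyref{lem:SFT20Lemma3.11}. The paper's proof is a single sentence asserting that ``$q\equiv1\pmod\ell$ forces $p$ to be a square modulo $\ell$ if $q$ is not a square''; your write-up merely spells out that number-theoretic step (odd order of $p$ in the cyclic group $(\Z/\ell\Z)^\times$ of even order $\ell-1$ implies $p$ lies in the index-$2$ subgroup of squares), which the paper leaves implicit.
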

\begin{proof}

This follows from Lemmas \ref{lem:SFT20Lemma3.11} and \ref{lem:sqrt}, since the condition $q\equiv 1\pmod\ell$ forces $p$ to be a square modulo $\ell$ if $q$ is not a square.
\end{proof}


 
 In the case $\ell=2$, we have the following, returning to our default assumption that $\bG$ is simply connected.
 \begin{proposition}\label{prop:Clambdaeven}
 Let $G=\bG^F$ where $\bG$ is simple of simply connected type, not of type $\type{A}_n$ or $\type{C}_n$, and such that $F$ is a Frobenius endomorphism defining $G$ over $\F_q$ with $q$ odd. Let $\chi=\HC_T^G(\la)_\eta\in\irr_{2'}(G)$, as in \prettyref{lem:MS7.9}.  Then $\gamma_{\la,\sigma}=1$ for every $\sigma\in\gal$.
 \end{proposition}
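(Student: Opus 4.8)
The plan is to show, via \prettyref{lem:SFT20Lemma3.11}, that the length $l(w_c)$ in the Weyl group $\bg{W}$ of $\bG$ is even for every $w_c\in C(\la)$; equivalently, writing $\det\colon\bg{W}\to\{\pm1\}$ for the sign character, that $C(\la)\subseteq\ker\det$. This really is the content of the proposition, because the other two sufficient conditions of \prettyref{lem:SFT20Lemma3.11} are of no use here: $q$ need not be a square, and since we want $\gamma_{\la,\sigma}=1$ for \emph{every} $\sigma\in\gal$ we cannot assume $\sigma$ fixes $\sqrt p$. So the whole argument comes down to bounding $C(\la)$ inside $\ker\det$.

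By \prettyref{lem:MS7.9} we may take $T$ maximally split and $\la\in\irr(T)$, and then \prettyref{lem:W(s)} (which applies since $\bG$ is simply connected and not of type $\type{A}_n$) identifies $C(\la)\cong W(\la)/R(\la)$ with a subgroup of $(\cen_{\bG^\ast}(s)/\cen_{\bG^\ast}^\circ(s))^{F^\ast}$, hence of $(\zen(\bG)/\zen(\bG)^\circ)^F=\zen(\bG)^F$ --- the last equality because $\bG$ is semisimple, so $\zen(\bG)$ is finite --- where $s\in T^\ast$ corresponds to $\la$. If $\bG$ is of type $\type{G}_2$, $\type{F}_4$ or $\type{E}_8$, or if $G=\tw{3}\type{D}_4(q)$ (where $F$ permutes cyclically the three involutions of $\zen(\bG)$), then $\zen(\bG)^F=1$ and $C(\la)=1$. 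If $\bG$ is of type $\type{E}_6$ or $\tw{2}\type{E}_6$, then $|\zen(\bG)|=3$, so every $w_c\in C(\la)$ has odd order; since $\det(w_c)\in\{\pm1\}$ has order dividing that of $w_c$, this forces $\det(w_c)=1$. This disposes of every type for which $\zen(\bG)^F$ has odd order.

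It remains to treat $\type{B}_n$, $\type{D}_n$ and $\type{E}_7$ (including $\tw{2}\type{D}_n$), where $\zen(\bG)$ is a $2$-group. Here I would combine \prettyref{lem:sinvol}, which gives $s^2=1$ for these types except $\type{D}_{2n+1}$ (where the short list of relevant semisimple classes has to be examined directly), with the fact that $\chi\in\irr_{2'}(G)$ forces $\cen_{G^\ast}(s)$ to contain a Sylow $2$-subgroup of $G^\ast$. Running through the possible centralizer structures, exactly as in the proof of \prettyref{lem:sinvol} (using the tables in \cite{luebeckwebsite} and \cite[Lemma 7.5]{MalleSpathMcKay2}), the Sylow $2$-condition should exclude every class with $\cen_{\bG^\ast}(s)$ disconnected, so that $C(\la)=1$; in any residual borderline configuration one checks directly that the non-trivial element of $C(\la)$, realized in $\bg{W}$ by an element of the form $w_0w_{0,J}$ (longest element of $\bg{W}$ times longest element of a standard parabolic $J$), has even length. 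In all cases $C(\la)\subseteq\ker\det$, and the proposition follows from \prettyref{lem:SFT20Lemma3.11}.

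The main obstacle is precisely this last step. The reductions in the first two paragraphs are formal; but pinning down which semisimple elements $s$ genuinely support odd-degree characters in types $\type{B}$, $\type{D}$ and $\type{E}_7$ --- and in particular handling $\type{D}_{2n+1}$, which escapes \prettyref{lem:sinvol} and where $\zen(\bG)^F$ can be cyclic of order $4$, so that one must rule out a length-odd element of order $4$ in $C(\la)$ --- requires the explicit case-by-case centralizer analysis, and is where essentially all of the work lies.
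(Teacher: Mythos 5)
Your reduction is correct and matches the paper's: by \prettyref{lem:SFT20Lemma3.11} the whole problem is to show $C(\la)\subseteq\ker\det$, and by \prettyref{lem:W(s)} this is controlled by $\zen(\bG)^F$. Your treatment of the odd-center types ($\type{G}_2$, $\type{F}_4$, $\type{E}_8$, $\tw{3}\type{D}_4$, $\type{E}_6^\epsilon$) is a harmless variant of the paper's argument: the paper notes $\gamma_{\la,\sigma}$ is a character of order dividing $2$ on a group of odd order, you note each $w_c$ has odd order so $\det(w_c)=1$ --- same content.

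The gap is in your third paragraph. You write that ``the Sylow $2$-condition should exclude every class with $\cen_{\bG^\ast}(s)$ disconnected, so that $C(\la)=1$''. This fails already for $\type{B}_n$: as the paper records in the proof of \prettyref{prop:equivextB} (drawing on \cite[Lemma 4.11]{SFgaloisHC} and \cite[Lemma 7.6]{MalleSpathMcKay2}), when $n=2^a\geq 4$ there is an involution $s$ with $\chi\in\mathcal{E}(G,s)\cap\irr_{2'}(G)$ and $W_{G^\ast}(s)\cong(\type{C}_{n/2}\times\type{C}_{n/2}).2$, so that $C(\la)\cong\Z/2$ is genuinely nontrivial. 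Your fallback --- ``in any residual borderline configuration one checks directly that the non-trivial element of $C(\la)$, realized by an element of the form $w_0w_{0,J}$, has even length'' --- is exactly what is needed, and indeed you flag it as ``where essentially all of the work lies''; but you do not carry it out, so the proposal is incomplete precisely at the decisive step. The paper closes this by citing \cite[Proposition 4.11]{SFgaloisHC}, which proves directly that every element of $C(\la)$ has even length in the Weyl group for types $\type{B}_n$ ($n\geq3$) and $\type{D}_n$ ($n\geq4$) other than $\tw{3}\type{D}_4$ (this also handles $\type{D}_{2n+1}$ and the order-$4$ concern you raise). For $\type{E}_7$ your plan to run the centralizer analysis as in \prettyref{lem:sinvol} is exactly the paper's argument, and there it does succeed: one finds $\cen_{\bG^\ast}(s)$ is either all of $\bG^\ast$ or connected of type $\type{D}_6+\type{A}_1$, so $C(\la)=1$.
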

 \begin{proof}
 If $|\zen(\bG)^F|$ is odd, this follows from \prettyref{lem:W(s)} and the fact that $\gamma_{\la,\sigma}$ has order dividing $2$.  If $\bG$ is type $\type{B}_n$ ($n\geq 3$) or $\type{D}_n$ ($n\geq 4$) and $G\neq \tw{3}{\type{D}_4}(q)$, then every member of $C(\la)$ has even length in the Weyl group of $\bG$, by \cite[Proposition 4.11]{SFgaloisHC}, and hence the statement holds using \prettyref{lem:SFT20Lemma3.11}.  This leaves the case $\bG$ is of type $\type{E}_7$. 
 As in \prettyref{lem:sinvol}, we may analyze the possible centralizer structures and see that either $s=1$ or $\cen_{\bG^\ast}(s)$ is connected, of type $\type{D}_6+\type{A}_1$.  In either case, this again yields $C(\la)=1$, and the result follows.
 \end{proof}

\section{The Extension Map $\Lambda$}\label{sec:extnmap}

Keep the notation of \prettyref{sec:prelim}.  In particular, recall that $G=\bG^F$ is of  simply connected type, not of Suzuki or Ree type, defined over $\F_q$ with $q$ a power of the prime $p$, and we have $V:=\langle n_\alpha(\pm1)\mid\alpha\in\Phi\rangle\leqslant \norm_{\bg{G}}(\bg{T})$, $H:=V\cap\bg{T}$ is an elementary abelian $2$-group (and trivial when $p=2$), $N=TV$, and $N_1=T_1V_1$.   For $\la\in\irr(T)$, note that $N_\la=TV_\la$.

Let $\iota\colon \bG\hookrightarrow \wt{\bG}$ be a regular embedding such that  $F$ extends to a Frobenius endomorphism of $\wt{\bG}$, which we will also denote by $F$. (See e.g. \cite[Section 1.7]{GM20} for more details on regular embeddings.) We will write $\wt{G}:=\wt{\bG}^F$. Let $D$ denote the subgroup of outer automorphisms generated by the field automorphism $F_0$ and graph automorphisms commuting with $F$, so that $\aut(G)$ is induced by $\wt{G}\rtimes D$.

Our goal in this section is to obtain an extension map $\Lambda$ with respect to $T\lhd N$ that is ``close to" $\gal$-equivariant.  In particular, we make use of the existing maps from \cite{Spath09, MalleSpathMcKay2}.  We begin with the case of some exceptional groups.


\begin{lemma}\label{lem:exceptLambda}
Let $G$ be as above, and assume either $p=2$ or $\bG$ is of type $\type{G}_2, \type{F}_4, \type{E}_6$, or $\type{E}_8$.  Then there is an ${N}D\times\gal$-equivariant extension map $\Lambda$ with respect to $T\lhd N$. 
\end{lemma}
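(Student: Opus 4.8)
The goal is to produce an $ND\times\gal$-equivariant extension map $\Lambda$ for $T\lhd N$, under the hypothesis that either $p=2$ (so $H=1$, $W=V$, and $N=T\rtimes V$ splits over $T$) or $\bG$ is of type $\type{G}_2,\type{F}_4,\type{E}_6,\type{E}_8$ (so $\zen(\bG)$ is small—trivial or of order $2$ or $3$—and $H=V\cap\bT$ is tame). The plan is to start from the extension map furnished by \cite{Spath09, MalleSpathMcKay2}, which is already $ND$-equivariant, and then upgrade it to also be $\gal$-equivariant by a careful choice of extensions.

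First I would recall the concrete structure: for $\la\in\irr(T)$, we have $N_\la=TV_\la$, and an extension of $\la$ to $N_\la$ amounts to extending first across $H$ (which is a $2$-group contained in $T$) and then across $V_\la/H\cong V_\la\cap\bg W$ acting on $\la$. When $p=2$ this first step is vacuous and $N_\la$ splits over $T$, so $\la$ has a canonical extension $\la^0$ that is trivial on the chosen complement $V_\la$; one checks directly that $(\la^0)^\sigma=(\la^\sigma)^0$ since $\sigma$ commutes with the ``trivial on $V_\la$'' normalization, giving full $\gal$-equivariance, and $ND$-equivariance because $D$ permutes the $n_\alpha(\pm1)$ generating $V$. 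When $p$ is odd and $\bG$ is one of the listed exceptional types, I would instead argue that the relevant characters $\la$ occurring (in practice, those with $2\nmid[W:W(\la)]$, or more generally all of $\irr(T)$) have stabilizer $V_\la$ for which the cohomological obstruction to extending $\la$ across $V_\la/H$ vanishes canonically—using that $\zen(\bG)^F$, and hence $H$ and the Schur multiplier obstruction, is controlled. Concretely: since the longest element issues and the group $H$ are as in \cite{Spath09} (Prop.\ 5.1, Lemma 6.1 for $\type{G}_2,\type{F}_4,\type{E}_8$; the $\type{E}_6$ case handled via the regular embedding $\iota\colon\bG\hookrightarrow\wt\bG$), one has a preferred extension, and one verifies it is stable under the $\gal$-action because $\gal$ fixes the relevant roots of unity appearing in the formula for the extension up to the linear characters $\delta_{\la,\sigma}$ of Gallagher's theorem, which one shows are trivial here.

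The key technical step—and the main obstacle—is controlling $\delta_{\la,\sigma}$, i.e.\ showing that the Galois twist of the chosen extension is again the chosen extension and not merely a Gallagher twist of it. The strategy is: (i) reduce to showing this on generators of $V_\la$, i.e.\ on the $n_\alpha(-1)$ with $s_\alpha\in V_\la/H$; (ii) use that $\Lambda(\la)(n_\alpha(-1))$ is, by the explicit construction in \cite{Spath09, MalleSpathMcKay2}, a root of unity whose order divides a power of $2$ (since $H$ is a $2$-group and the extension is normalized to have $2$-power order on a transversal)—here the exceptional-type hypothesis is what bounds this; (iii) observe that for $\sigma\in\gal$ acting on $\Q(e^{2\pi i/|N|})$, applying $\sigma$ to such a $2$-power root of unity yields another power of it, which coincides with the chosen value precisely because the chosen extension is the unique one trivial on the odd part, forcing $\delta_{\la,\sigma}\equiv 1$. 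I would then assemble equivariance: $ND$-equivariance is inherited from \cite{Spath09, MalleSpathMcKay2} and preserved by the normalization; $\gal$-equivariance follows from $\delta_{\la,\sigma}=1$; and the two commute because $D$ acts by permuting Chevalley generators while $\gal$ acts on coefficients, so the combined $ND\times\gal$-equivariance holds. The delicate point to get exactly right is the $\type{E}_6$ (and $\tw2\type{E}_6$) case, where $\zen(\bG)$ has order $3$: there one must pass through $\wt\bG$ and check that the $\gal$-action on the extension across the order-$3$ part of $H$—which now genuinely involves cube roots of unity—is still compatible, using that for these types the characters $\la$ relevant to Condition $\dagger$ (those with $2\nmid[W:W(\la)]$) have $W(\la)/R(\la)$ a $2$-group by \prettyref{lem:W(s)}, so the problematic $C(\la)$ part does not interfere.
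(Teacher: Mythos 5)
Your overall strategy — take the $ND$-equivariant extension map of Sp\"ath and Malle--Sp\"ath and argue that a suitable normalization of it is also $\gal$-equivariant — is aligned with the paper's approach. The paper does this by working in three stages: (1) use the construction of \cite[Lemma 5.4]{Spath09} to produce an extension map $\Lambda_0'$ for $H\lhd V$ that is \emph{trivial} on the generators $n_\alpha(-1)$ of $V_\delta$, hence $\{\pm1\}$-valued (since $\delta\in\irr(H)$ is also $\{\pm1\}$-valued) and therefore pointwise $\gal$-fixed; (2) upgrade to a $V$-equivariant map $\Lambda_0$ via orbit representatives, noting the Gallagher characters $\mu_{x,i}$ appearing are $\{\pm1\}$-valued because $V_{\delta_i}/H$ is generated by involutions; (3) lift to $T\lhd N$ by the formula $\Lambda_\la(vt)=\Lambda_0(\la\downarrow_H)(v)\la(t)$. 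The $p=2$ case is trivial since $H=1$.

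However, two specific points in your proposal are wrong and would derail the argument if pursued as written. First, your step (iii) rests on the claim that $\Lambda(\la)(n_\alpha(-1))$ having $2$-power order suffices for $\gal$-fixedness; this is false, since a primitive $2^k$th root of unity with $k\geq2$ is moved by elements of $\gal$. What the paper actually establishes is the much stronger fact that the values are in $\{\pm1\}$ — this is the entire point of choosing $\Lambda_0'(\delta)$ trivial on the $n_\alpha(-1)$, which together with $\delta$ being a character of an elementary abelian $2$-group forces $\{\pm1\}$-valuedness. Second, and more seriously, your treatment of $\type{E}_6$ starts from a false premise: you assert that $H$ has an ``order-$3$ part'' and that the extension ``genuinely involves cube roots of unity,'' but $H=V\cap\bT=\langle h_\alpha(-1)\mid\alpha\in\Phi\rangle$ is \emph{always} an elementary abelian $2$-group (of size $(2,q-1)^{|\Delta|}$), regardless of the order of $\zen(\bG)$. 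There is no cube-root issue at the level of $H$, and the proposed detour through $\wt\bG$ and the invocation of $C(\la)$ being a $2$-group via \prettyref{lem:W(s)} is not needed here (those considerations belong rather to the later analysis of $\delta_{\la,\sigma}$ on $R(\la)$ versus $C(\la)$, i.e.\ to \prettyref{lem:extRla}). In the paper, $\type{E}_6$ is handled identically to $\type{G}_2,\type{F}_4,\type{E}_8$, precisely because the construction of \cite[Section 5]{Spath09} applies to all these root systems uniformly.
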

\begin{proof}
The construction of the map $\Lambda$ is exactly as in \cite[Corollary 3.13]{MalleSpathMcKay2} in the case $d=v=1$, using the construction from \cite[Section 5]{Spath09} for an extension map with respect to $H\lhd V$.  We provide the construction for the convenience of the reader, and to show that it is $\gal$-equivariant.

\emph{(1) A $\gal$-equivariant extension with respect to $H\lhd V$.}  First, note that when $p=2$, we have $H=1$.  Hence we may assume $p$ is odd. According to \cite[Lemma 5.3]{Spath09}, for each $\delta\in\irr(H)$, there is a subset $R(\delta)$ of $\Phi$ such that $V_\delta=\langle H, n_\alpha(-1)\mid \alpha\in R(\delta)\rangle$.  By \cite[Proof of Lemma 5.4]{Spath09} there is an extension map $\Lambda_0'$ with respect to $H\lhd V$ such that $\Lambda_0'(\delta)$ is trivial on $n_\alpha(-1)$ for each $\alpha\in R(\delta).$  Note that $H$ is an elementary abelian $2$-group, so $\delta^\sigma=\delta$ for all $\delta\in\irr(H)$ and all $\sigma\in\gal$.  Hence we see that \[\Lambda_0'(\delta)^\sigma=\Lambda_0'(\delta)=\Lambda_0'(\delta^\sigma)\] for each $\delta\in\irr(H)$ and $\sigma\in\gal$.

\emph{(2) A $VD\times \gal$-equivariant extension with respect to $H\lhd V$.}  Let $\delta_1,\ldots,\delta_r$ be representatives of the $V$-orbits on $\irr(H)$.  For each $x\in V$ and $i\in\{1,...,r\}$, there is some linear $\mu_{x,i}\in\irr(V_{\delta_i}/H)$ such that $\Lambda_0'(\delta_i)^x=\mu_{x,i}\Lambda_0'(\delta_i^x)$.  Note that for each $x$ and $i$, $\mu_{x,i}$ is fixed by each $\sigma\in\gal$, since $V_{\delta_i}/H$ is generated by involutions and $\mu_{x,i}$ is a linear character.  Then we define another extension map with respect to $H\lhd V$ via $\Lambda_0(\delta_i):=\Lambda_0'(\delta_i)$ for each $i$ and \[\Lambda_0(\delta_i^x):=\Lambda_0'(\delta_i)^x=\mu_{x,i}\Lambda_0'(\delta_i^x)\] for each $x\in V$.   This map is well-defined, since if $\delta_i^x=\delta_i^y$, then $xy^{-1}\in V_{\delta_i}$ fixes $\delta_i$ and $\Lambda_0'(\delta_i)$.  By construction, $\Lambda_0$ is $V$-equivariant.  Further, we have for each $x\in V$, $i\in\{1,\ldots,r\}$, and $\sigma\in\gal$,
\[\Lambda_0(\delta_{i}^x)^\sigma=\mu_{x,i}^\sigma\Lambda_0'(\delta_{i}^x)^\sigma=\mu_{x,i}\Lambda_0'(\delta_i^{x})=\Lambda_0(\delta_i^x).\]  

Hence $\Lambda_0$ is a $V$-equivariant extension with respect to $H\lhd V$ which still satisfies $\Lambda_0(\delta)^\sigma=\Lambda_0(\delta)=\Lambda_0(\delta^\sigma)$ for each $\delta\in\irr(H)$ and $\sigma\in\gal$.  Further, since $F_0$ acts trivially on $V$, the extension map $\Lambda_0$ is $VD\times\gal$-equivariant by the proofs of \cite[Proposition 3.9]{MalleSpathMcKay2} and \cite[Lemma 8.2]{Spath09}.

\emph{(3) The $ND\times \gal$-equivariant extension with respect to $T\lhd N$.}   Finally, we use the strategy of \cite[4.2]{Spath09} to obtain the desired extension map with respect to $T\lhd N$.  For $\lambda\in\irr(T)$, we define an extension $\Lambda(\la)=\Lambda_\la$ of $\la$ to $N_\la$ via $\Lambda_\la(vt)=\Lambda_0(\la\downarrow_H)(v)\la(t)$ for each $v\in V_\la$ and $t\in T$, since $N=TV$.  Then $\Lambda$ is $ND$-equivariant by construction, and
\[\Lambda_\la^\sigma(vt)=\Lambda_0(\la\downarrow_H)^\sigma(v)\la^\sigma(t)=\Lambda_0(\la\downarrow_H^\sigma)(v)\la^\sigma(t)=\Lambda_{\la^\sigma}(vt)\] for each $\sigma\in\gal$, $v\in V_\la$, and $t\in T$.
\end{proof}

\begin{remark}\label{rem:trialityLambda}
When $G=\tw{3}\type{D}_4(q)$, we similarly have an $ND$-equivariant extension map $\Lambda$ with respect to $T\lhd N$, by \cite[Theorem 3.6]{CS13}. 
\end{remark}


%

\assumption{
For the remainder of the article, we let $\Lambda$ denote the $ND$-equivariant extension map with respect to $T\lhd N$ as in \prettyref{lem:exceptLambda}, \prettyref{rem:trialityLambda}, or \cite[Corollary 3.13]{MalleSpathMcKay2}.  We remark that for the overlapping case that $\bG$ is type $\type{E}_6$, these maps agree. 
}

Recall that for $\la\in\irr(T)$ and $\sigma\in\gal$, we let $\delta_{\la,\sigma}\in\irr(W(\la))$ denote the linear character of $W(\la)=W(\la^\sigma)$ such that $\Lambda(\la)^\sigma=\delta_{\la,\sigma}\Lambda(\la^\sigma)$.  It will also sometimes be useful to denote $\Lambda_\la:=\Lambda(\la)$.

Let $R_\la$ be the subgroup of $N_\la$ generated by $\bg{T}^F=T$ and $\langle n_\alpha(-1)\mid \alpha\in\Phi_\la\rangle$, so that $R_\la/T\cong R(\la)$.   Note that $\Phi_\la=\Phi_{\la^\sigma}$ (see \cite[Lemma 3.2]{SFgaloisHC}). 
The following extends \cite[Lemma 3.13]{SFT20} and implies that the character $\delta_{\la,\sigma}$ of $W(\la)$ can be viewed as a character of  $C(\la)$, and hence is equal to $\delta_{\la,\sigma}'$.

\begin{lemma}\label{lem:extRla}
Keep the notation from the beginning of the section, and assume $\bG$ is not of type $\type{A}_n$. Let $\la\in\irr(T)$ and $\sigma\in\gal$.   Then $\Lambda_\la^\sigma\downarrow_{R_\la}=\Lambda_{\la^\sigma}\downarrow_{R_\la}$.  In particular, $R(\la)\leqslant \ker\delta_{\la,\sigma}$.  
\end{lemma}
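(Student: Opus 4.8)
The plan is to reduce the identity $\Lambda_\la^\sigma\downarrow_{R_\la}=\Lambda_{\la^\sigma}\downarrow_{R_\la}$ to a comparison on the generators $n_\alpha(-1)$, $\alpha\in\Phi_\la$, of $R_\la$ modulo $T$, and then to read off the relevant character values from the explicit shape of the standing extension map $\Lambda$. First I would note that for $\alpha\in\Phi_\la$ the reflection $s_\alpha$ lies in $W(\la)\leqslant W=\bg{W}^F$, which forces $\tau'(\alpha)=\alpha$ (a reflection $s_\alpha$ is $F$-stable only if $\tau'$ fixes $\pm\alpha$, hence $\alpha$); in particular $n_\alpha(-1)\in N_\la$ and $n_\alpha(-1)^2=h_\alpha(-1)\in H$. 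Since the extension maps entering the standing assumption are linear, $\Lambda_\la^\sigma$ and $\Lambda_{\la^\sigma}$ are linear characters of $N_\la$ agreeing with $\la^\sigma$ on $T$; as $R_\la=\langle T,\,n_\alpha(-1)\mid\alpha\in\Phi_\la\rangle$, it suffices to prove $\Lambda_\la^\sigma(n_\alpha(-1))=\Lambda_{\la^\sigma}(n_\alpha(-1))$ for every $\alpha\in\Phi_\la$. Combined with $\Lambda_\la^\sigma=\delta_{\la,\sigma}\Lambda_{\la^\sigma}$, this also yields $\delta_{\la,\sigma}\downarrow_{R(\la)}=1$, i.e. the asserted $R(\la)\leqslant\ker\delta_{\la,\sigma}$.

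Next I would split into cases according to the standing assumption on $\Lambda$. When $\Lambda$ is the $ND\times\gal$-equivariant map of \prettyref{lem:exceptLambda} — in particular whenever $p=2$, where $H=1$ — we have $\Lambda_\la^\sigma=\Lambda_{\la^\sigma}$ on all of $N_\la$ and there is nothing left to prove. In the remaining cases ($p$ odd and $\bG$ of type $\type{B}_n$, $\type{C}_n$, $\type{D}_n$, or $\type{E}_7$, or $G=\tw{3}\type{D}_4(q)$) I would use that $\Lambda$ is built from an extension map $\Lambda_0$ with respect to $H\lhd V$ via $\Lambda_\la(vt)=\Lambda_0(\la\downarrow_H)(v)\,\la(t)$ for $v\in V_\la$, $t\in T$ (see \cite[4.2]{Spath09}, \cite[Section 3.A, Corollary 3.13]{MalleSpathMcKay2}, \cite[Section 3]{CS13}), so that in particular $\Lambda_\la(n_\alpha(-1))=\Lambda_0(\la\downarrow_H)(n_\alpha(-1))$ depends only on $\la\downarrow_H$.

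Two observations then finish it. (a) For $\alpha\in\Phi_\la$ one has $\la(h_\alpha(-1))=1$: this is where the defining condition $p_{\alpha,\la}\neq1$ is used, since computing in $[L_\alpha,L_\alpha]^F\cong\SL_2(q)$ (the $\tau'$-orbit of $\alpha$ is the single root $\alpha$), $p_{\alpha,\la}\neq1$ forces $\la$ to be trivial on the maximally split torus of this subgroup, which contains $h_\alpha(-1)$ — equivalently, this is immediate from the identification $R(\la)\cong W^\circ(s)$ of \prettyref{lem:W(s)} together with duality. Consequently, since $\Lambda_0(\la\downarrow_H)$ extends $\la\downarrow_H$ and $n_\alpha(-1)^2=h_\alpha(-1)$, the value $\Lambda_0(\la\downarrow_H)(n_\alpha(-1))$ is a square root of $(\la\downarrow_H)(h_\alpha(-1))=1$, hence lies in $\{\pm1\}$. (b) As $H$ is an elementary abelian $2$-group, $\la\downarrow_H$ is $\gal$-fixed, so $\la^\sigma\downarrow_H=\la\downarrow_H$ and $\Lambda_0(\la^\sigma\downarrow_H)=\Lambda_0(\la\downarrow_H)$. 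Combining (a) and (b),
\[\Lambda_\la^\sigma(n_\alpha(-1))=\Lambda_0(\la\downarrow_H)(n_\alpha(-1))^\sigma=\Lambda_0(\la\downarrow_H)(n_\alpha(-1))=\Lambda_0(\la^\sigma\downarrow_H)(n_\alpha(-1))=\Lambda_{\la^\sigma}(n_\alpha(-1)),\]
which is exactly what was needed.

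The step I expect to be the main obstacle is the verification of the two ``structural'' ingredients of the second case: confirming that every extension map permitted by the standing assumption — those of \prettyref{lem:exceptLambda} for $p$ odd, of \cite[Corollary 3.13]{MalleSpathMcKay2}, and of \cite{CS13} for $\tw{3}\type{D}_4(q)$ — genuinely arises from an $H\lhd V$-extension map in the displayed form, so that $\Lambda_\la(n_\alpha(-1))$ really does depend only on $\la\downarrow_H$; and pinning down fact (a) uniformly, i.e. identifying the rank-one Levi $L_\alpha$ attached to $\alpha\in\Phi_\la$ and the location of $h_\alpha(-1)$ inside $[L_\alpha,L_\alpha]^F$ across all the relevant (including twisted) cases. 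Everything else is formal.
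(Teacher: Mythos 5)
Your proof is correct and follows essentially the same route as the paper: reduce to the values of $\Lambda_0(\la\downarrow_H)$ on the $n_\alpha(-1)$ for $\alpha\in\Phi_\la$, observe that $\la^\sigma\downarrow_H=\la\downarrow_H$ because $H$ has exponent $2$, and use that these values lie in $\{\pm1\}$. The only real difference is presentational: where the paper invokes \cite[Lemma 3.13]{SFT20} for the $\{\pm1\}$-valuedness on $V(\la):=\langle n_\alpha(-1)\mid\alpha\in\Phi_\la\rangle$, you re-derive it on the spot from $\la(h_\alpha(-1))=1$ for $\alpha\in\Phi_\la$ together with $n_\alpha(-1)^2=h_\alpha(-1)$.
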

\begin{proof}
Let $\la_0:=\la\downarrow_H$ and note that $\la_0^\sigma=\la_0$, since $H$ is elementary abelian.  By the construction in the proofs of \cite[Corollary 3.13]{MalleSpathMcKay2} and \cite[Lemma 4.2]{Spath09}, we see that $\Lambda_\la(x)=\Lambda_{\la^\sigma}(x)$ for $x\in V_{\la}$, and hence for $x\in V(\la):=\langle n_{\alpha}(-1)\mid \alpha\in\Phi_{\la}\rangle$. Then for $t\in T$ and $x\in V(\la)$, we have 
\[\Lambda_{\la}^\sigma(tx)=\Lambda_{\la}^\sigma(t)\Lambda_{\la}^\sigma(x)=\la^\sigma(t)\Lambda_{\la}(x)=\la^\sigma(t)\Lambda_{\la^\sigma}(x)=\Lambda_{\la^\sigma}(tx),\]
where the second equality is since $\Lambda_\la(x)\in\{\pm1\}$ for $x\in V(\la)$, thanks to \cite[Lemma 3.13]{SFT20}. 
\end{proof}

\begin{corollary}\label{cor:exceptLambda1}
Let ${G}$ be  $\type{G}_2(q)$, $\type{F}_4(q)$, $\tw{3}\type{D}_4(q)$, $\type{E}_6^\epsilon(q)$, or $\type{E}_8(q)$.  Then $\Lambda$ is $ND\times \gal$-equivariant.  Further, there is an ${N}_1D\times\gal$-equivariant extension map $\Lambda_1$ with respect to $T_1\lhd N_1$. 
\end{corollary}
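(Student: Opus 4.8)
The plan is to read the statement off the constructions and auxiliary lemmas already in place, splitting the list of groups by which earlier result applies. For the first assertion: if $\bG$ is of type $\type{G}_2,\type{F}_4,\type{E}_6$ or $\type{E}_8$, then the globally fixed $\Lambda$ is the map produced in \prettyref{lem:exceptLambda}, which is already $ND\times\gal$-equivariant; and if $G=\tw{3}\type{D}_4(q)$ with $q$ even, then $p=2$ and \prettyref{lem:exceptLambda} again applies. So the only remaining case is $G=\tw{3}\type{D}_4(q)$ with $q$ odd, where $\Lambda$ is the map of \prettyref{rem:trialityLambda} from \cite[Theorem 3.6]{CS13}, a priori only $ND$-equivariant. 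Here I would show that $\delta_{\la,\sigma}=1$ for all $\la\in\irr(T)$ and $\sigma\in\gal$, which by the definition of $\delta_{\la,\sigma}$ is exactly the statement $\Lambda(\la)^\sigma=\Lambda(\la^\sigma)$. Since the triality extension map is built, just as in step (3) of the proof of \prettyref{lem:exceptLambda}, by inflating an extension map with respect to $H\lhd V$, the argument of \prettyref{lem:extRla} applies and gives $R(\la)\leqslant\ker\delta_{\la,\sigma}$; hence $\delta_{\la,\sigma}$ descends to a linear character of $C(\la)\cong W(\la)/R(\la)$. By \prettyref{lem:W(s)} this group embeds into $(\zen(\bG)/\zen(\bG)^\circ)^F=\zen(\bG)^F$, and for $\bG$ of type $\type{D}_4$ with $q$ odd one has $\zen(\bG)\cong(\Z/2)^2$, on which the triality component of $F$ permutes the three involutions cyclically; hence $\zen(\bG)^F=1$, so $C(\la)=1$ and $\delta_{\la,\sigma}=1$.

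For the second assertion I would build $\Lambda_1$ by mimicking the construction of $\Lambda$ with $(T_1,V_1,H_1,N_1)$ in place of $(T,V,H,N)$. When $G$ is $\type{G}_2(q)$, $\type{F}_4(q)$, $\type{E}_8(q)$ or $\tw{3}\type{D}_4(q)$, \prettyref{lem:V1V} gives $V_1=V$ and $H_1=H$, and we have $N_1=T_1V_1$ with $H=H_1\leqslant\bT^{vF}=T_1$, so $\mu\downarrow_H$ makes sense for $\mu\in\irr(T_1)$. Using the same $\gal$-fixed extension map $\Lambda_0$ with respect to $H\lhd V$ produced in the proof of \prettyref{lem:exceptLambda} (resp. the analogous datum behind \cite[Theorem 3.6]{CS13} for $\tw{3}\type{D}_4(q)$), set $\Lambda_1(\mu)(vt):=\Lambda_0(\mu\downarrow_H)(v)\,\mu(t)$ for $v$ in the stabilizer of $\mu$ in $V_1$ and $t\in T_1$; this is a well-defined extension map with respect to $T_1\lhd N_1$, and its $N_1D\times\gal$-equivariance follows by the same computation as in step (3) of \prettyref{lem:exceptLambda}, using that $F_0$ acts trivially on $V_1=V$ and that $\mu\downarrow_H=\mu^\sigma\downarrow_H$ because $H$ is an elementary abelian $2$-group. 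When $G=\type{E}_6^\epsilon(q)$ the longest element of $\bg{W}$ is not central, so $V_1\neq V$ and this shortcut fails; there I would instead take the $N_1D$-equivariant extension map $\Lambda_1$ with respect to $T_1\lhd N_1$ supplied by \cite[Corollary 3.13]{MalleSpathMcKay2}, and argue its $\gal$-equivariance by inspecting that construction, which again inflates an extension map with respect to $H_1\lhd V_1$ taking values in $\{\pm1\}$ on the generators $n_\alpha(-1)$: since $H_1\leqslant H$ has exponent dividing $2$ this gives $\Lambda_1(\mu)^\sigma=\Lambda_1(\mu^\sigma)$; alternatively the resulting $\delta_{\mu,\sigma}$ then factors through $C(\mu)\hookrightarrow\zen(\bG)^F$, a group of order dividing $3$, while $\delta_{\mu,\sigma}$ has $2$-power order, so it is trivial.

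I expect the main obstacle to be precisely the two places where externally cited extension maps enter — \cite[Theorem 3.6]{CS13} for $\tw{3}\type{D}_4(q)$ and \cite[Corollary 3.13]{MalleSpathMcKay2} for $\type{E}_6^\epsilon(q)$ — where one must confirm that these maps genuinely have the ``inflate an extension map along $H\lhd V$ (resp. $H_1\lhd V_1$)'' shape needed to invoke \prettyref{lem:extRla} and the $\{\pm1\}$-valued computation. Granted that, the triviality of $C(\la)$ in type $\type{D}_4$ and its odd order in type $\type{E}_6$ close the argument, and everything else is a direct transcription of \prettyref{lem:exceptLambda}, \prettyref{lem:extRla}, \prettyref{lem:W(s)}, and \prettyref{lem:V1V}.
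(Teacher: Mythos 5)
Your proof is essentially the paper's own argument: for $\type{G}_2,\type{F}_4,\type{E}_6^\epsilon,\type{E}_8$ the first claim is a restatement of \prettyref{lem:exceptLambda}; for $\tw3\type{D}_4(q)$ one uses $C(\la)=1$ (\prettyref{lem:W(s)}) together with \prettyref{lem:extRla}; and the $\Lambda_1$ part repeats the \prettyref{lem:exceptLambda} construction over $(T_1,V_1,H_1,N_1)$ after invoking \prettyref{lem:V1V}. You unpack the $C(\la)=1$ step for triality a bit more explicitly than the paper (via $\zen(\bG)^F=1$), which is fine. The one place you genuinely deviate is $\type{E}_6^\epsilon$: the paper uses the structural fact from \cite[Lemma 6.1]{Spath09} that either $(V_1,H_1)=(V,H)$ or $(V_1,H_1)\cong(V',H')$ for a root system of type $\type{F}_4$, and then simply reruns the \prettyref{lem:exceptLambda} construction in either situation; you instead import the $N_1D$-equivariant map of \cite[Corollary 3.13]{MalleSpathMcKay2} and then argue $\gal$-equivariance either by inspecting that construction or by the parity shortcut $\delta^2_{\mu,\sigma}=1$ versus $|C(\mu)|\mid 3$. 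Both routes reach the same conclusion; the paper's has the small advantage of using only the ingredients already set up in Section 4, whereas your parity argument for $\type{E}_6$ is a clean alternative that avoids a case split on the shape of $V_1$. Two small cautions: your unqualified statement ``$V_1\neq V$'' for $\type{E}_6^\epsilon$ is not always true (the paper notes $V_1=V$ can occur depending on $\epsilon$ and the twist), though your argument does not actually need it; and your parity shortcut implicitly uses that the analogue of \prettyref{cor:deltaorder2} holds for $\Lambda_1$, which requires the same construction-level inspection you already flagged, so it is not a free lunch relative to the paper's route.
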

\begin{proof}
The statement about $\Lambda$ is just a restatement of \prettyref{lem:exceptLambda}, except for the case $\tw{3}\type{D}_4(q)$.  In the latter case, since $C(\la)=1$ by \prettyref{lem:W(s)}, \prettyref{lem:extRla} immediately yields that the map $\Lambda$ in \prettyref{rem:trialityLambda} is also $\gal$-equivariant. Hence we assume $q\equiv 3\pmod 4$ and prove the second statement.

 If $\bG$ is not of type $\type{E}_6$, this follows analogously to  \prettyref{lem:exceptLambda} (and appealing to \prettyref{lem:extRla} in the case $\tw{3}\type{D}_4(q)$ since $C(\la)=1$), since $V_1=V, H_1=H,$ and $N_1=T_1V$ in these cases by \prettyref{lem:V1V}.  If $\bG$ is type $\type{E}_6$, then the proof of \cite[Lemma 6.1]{Spath09} show that either again $V_1=V$ and $H_1=H$ or $V_1\cong V'$ and $H_1\cong H'$, where $H'\lhd V'$ are the corresponding groups for a root system of type $\type{F}_4$.  In either case, the proof of \prettyref{lem:exceptLambda} again yields the result.
\end{proof}

\begin{corollary}\label{cor:deltaorder2}
In the situation of \prettyref{lem:extRla}, we have $\delta_{\la,\sigma}^2=1$.
\end{corollary}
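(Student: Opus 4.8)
We must show that $\delta_{\la,\sigma}$, which by definition is a linear character of $W(\la)$, has order dividing $2$. By \prettyref{lem:extRla} we have $R(\la)\leqslant\ker\delta_{\la,\sigma}$, so $\delta_{\la,\sigma}$ factors through $W(\la)/R(\la)$; using the decomposition $W(\la)=R(\la)\rtimes C(\la)$ this quotient is identified with $C(\la)$, which by \prettyref{lem:W(s)} is isomorphic to a subgroup of $(\zen(\bG)/\zen(\bG)^\circ)^F=\zen(\bG)^F$ (recall $\bG$ is simple, hence semisimple, so $\zen(\bG)^\circ=1$). Thus it suffices to bound the order of $\delta_{\la,\sigma}$ regarded as a character of $C(\la)$.

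Now I would run through the simple, simply connected types other than $\type{A}_n$. The center $\zen(\bG)$ is trivial in types $\type{G}_2$, $\type{F}_4$, $\type{E}_8$; cyclic of order $2$ in types $\type{B}_n$, $\type{C}_n$, $\type{E}_7$; isomorphic to $\Z/2\times\Z/2$ in type $\type{D}_n$ with $n$ even; cyclic of order $4$ in type $\type{D}_n$ with $n$ odd; and cyclic of order $3$ in type $\type{E}_6$. In the first three families $\zen(\bG)$ — hence $C(\la)$ — has exponent dividing $2$, so $\delta_{\la,\sigma}^2=1$ is immediate. Furthermore, whenever $p=2$ (so $H=1$), or $\bG$ is of type $\type{G}_2$, $\type{F}_4$, $\type{E}_6$, or $\type{E}_8$, or $G=\tw{3}\type{D}_4(q)$, the chosen extension map $\Lambda$ is $ND\times\gal$-equivariant by \prettyref{lem:exceptLambda}, \prettyref{rem:trialityLambda} and \prettyref{cor:exceptLambda1}, so $\delta_{\la,\sigma}=1$; in particular this settles type $\type{E}_6$.

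What remains — and what I expect to be the main obstacle — is the case $\bG$ of type $\type{D}_n$ or $\tw{2}\type{D}_n$ with $n$ odd and $p$ odd, where $C(\la)$ may be cyclic of order $4$ and $\Lambda$ is the extension map of \cite[Corollary 3.13]{MalleSpathMcKay2}. Here I would use the identity isolated in the proof of \prettyref{lem:extRla}: since $\Lambda_\la$ and $\Lambda_{\la^\sigma}$ agree on $V_\la$, one has $\delta_{\la,\sigma}(vT)=\Lambda_\la(v)^\sigma/\Lambda_\la(v)$ for $v\in V_\la$, and hence
\[\delta_{\la,\sigma}(vT)^2=\frac{\bigl(\Lambda_\la(v)^2\bigr)^\sigma}{\Lambda_\la(v)^2}=\frac{\Lambda_\la(v^2)^\sigma}{\Lambda_\la(v^2)}.\]
So it is enough to show that $\Lambda_\la(v^2)$ is fixed by $\sigma$ for every $v\in V_\la$. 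The cleanest route I see is to verify, from the explicit construction of \cite[Corollary 3.13]{MalleSpathMcKay2} together with \cite[Lemma 3.13]{SFT20} (which controls the values of $\Lambda_\la$ on the elements $n_\alpha(-1)$ and is already used in the proof of \prettyref{lem:extRla}), that $\Lambda_\la$ takes values of order dividing $4$ on $V_\la$; then $\Lambda_\la(v)\in\{\pm1,\pm i\}$ forces $\Lambda_\la(v)^\sigma=\pm\Lambda_\la(v)$, so the displayed ratio is $1$. Alternatively, one can observe that $(vT)^2$ lies in $R(\la)$ times the subgroup of order at most $2$ of $C(\la)$ and check that $\Lambda_\la$ is $\{\pm1\}$-valued on suitable lifts. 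The genuinely delicate point is that the order-$4$ torsion in $\zen(\bG)$ for odd $n$ must not propagate to the values of the extension map: I would expect those values to be governed by $\la$ evaluated on an element of order $2$ of $\zen(\bG)^F$ (hence a sign), together with the $\pm1$-contributions of the reflections $n_\alpha(-1)$, which would yield the claim.
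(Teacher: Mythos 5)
Your reduction to the type $\type{D}_n$ case (via $R(\la)\leqslant\ker\delta_{\la,\sigma}$, the identification $W(\la)/R(\la)\cong C(\la)\hookrightarrow\zen(\bG)^F$, and the $\gal$-equivariance of $\Lambda$ in type $\type{E}_6$) matches the paper exactly, and your diagnosis that the only real obstacle is type $\type{D}_n$ with $n$ odd and $p$ odd, where $C(\la)$ may have exponent $4$, is also correct.

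For that remaining case, however, you have identified a strategy rather than carried out a proof. The step you need — that $\Lambda_\la$ takes values of order dividing $4$ on $V_\la$ (or, in your alternative, that $\Lambda_\la$ is $\{\pm1\}$-valued on a suitable lift of $c^2$ for $c$ a generator of $C(\la)$) — is stated as something you "would expect" to follow from the construction, but it is never verified, and \cite[Lemma 3.13]{SFT20} only controls the values of $\Lambda_\la$ on the reflections $n_\alpha(-1)$ with $\alpha\in\Phi_\la$, which lift $R(\la)$; it says nothing a priori about the values on lifts of $C(\la)$, which is exactly where the order-$4$ torsion could enter. So as written there is a genuine gap at the crux.

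The paper closes this gap by a different route. Rather than bounding the values of $\Lambda_\la$ directly, it uses the specific recipe of \cite[Proposition 3.10]{MalleSpathMcKay2}: the type $\type{D}_n$ extension map $\Lambda_0$ is obtained from a type $\type{B}_n$ extension map $\Lambda_B$ (for an overgroup $\overline{\bG}$ of type $\type{B}_n$ with the same $H$) by restriction to $V_\la$ and conjugation by a fixed $t\in\bT$, namely $\Lambda_0(\la)=(\Lambda_B(\la)\downarrow_{V_\la})^t$. Chasing the Gallagher characters through this formula gives $\delta_{\la,\sigma}=(\delta_B)\downarrow_{V_\la}^t$, where $\delta_B$ is the corresponding character for $\Lambda_B$. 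But in type $\type{B}_n$ the simply connected center has order $2$, so $\delta_B$ already has order dividing $2$, and this order bound is inherited by $\delta_{\la,\sigma}$. In other words, the paper transfers the problem to a type where the exponent-$2$ argument you already used for $\type{B}_n$, $\type{C}_n$, $\type{E}_7$ applies; your approach would instead require an explicit computation of the values of $\Lambda_0$ on lifts of $C(\la)$ in type $\type{D}_n$, which you have not supplied. If you want to complete your route, you would need to extract from \cite[Proposition 3.10]{MalleSpathMcKay2} the actual value $\Lambda_0(\la_0)(v)$ on a lift $v$ of a generator of $C(\la)$ and show it lies in $\mu_4$; the paper's indirect argument avoids that computation.
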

\begin{proof}
If $\bG$ is of type $\type{E}_6$ or $(\zen(\bG)/\zen^\circ(\bG))^F$ is trivial or an elementary abelian $2$-group, then Lemmas \ref{lem:W(s)}, \ref{lem:exceptLambda}, and \ref{lem:extRla} imply the result, since $\delta_{\la,\sigma}$ is trivial for $\type{E}_6$ and can be thought of as a character of $C(\la)$ in each case.  

This leaves the case $\bG$ is of type $\type{D}_n$.  In this case, along the lines of the illustration in \prettyref{lem:exceptLambda}(3), the extension map $\Lambda$ is constructed using an extension map $\Lambda_0$ with respect to $H\lhd V$, which is constructed in \cite[Proposition 3.10]{MalleSpathMcKay2}.  Using an embedding of $\bg{G}$ into a group $\overline{\bg{G}}$ of type $\type{B}_n$, the map $\Lambda_0$ in \cite[Proposition 3.10]{MalleSpathMcKay2} is moreover defined in terms of an extension map $\Lambda_B$ with respect to $\overline{H}\lhd\overline{V}$ in the case of type $\type{B}$, where $\overline{V}$ is the corresponding extended Weyl group for $\overline{\bG}$ and $\overline{H}=H$ is the toral subgroup.  Specifically, we have $\Lambda_0(\la)=(\Lambda_B(\la)\downarrow_{V_\la})^t$ for a specific $t\in \bT$. Here to alleviate notation, we let $\Lambda_0(\la):=\Lambda_0(\la\downarrow_H)$ and $\Lambda_B(\la):=\Lambda_B(\la\downarrow_H)$.

  Let $\sigma\in\gal$.  Letting $\delta_B\in\irr(\overline{V}_{\la\downarrow_H}/H)$ be such that $\Lambda_B(\la)^\sigma=\delta_B\Lambda_B(\la^\sigma)$, we then have  $\Lambda_0(\la^\sigma)=(\Lambda_B(\la^\sigma)\downarrow_{V_\la})^t=((\delta_B^{-1}\Lambda_B(\la)^\sigma)\downarrow_{V_\la})^t=(\delta_B^{-1}\downarrow_{V_\la})^t(\Lambda_B(\la)^\sigma\downarrow_{V_\la})^t=(\delta_B^{-1}\downarrow_{V_\la})^t\Lambda_0(\la)^\sigma$.  Then since $\Lambda_\la(tv)=\la(t)\Lambda_0(\la)(v)$ for $t\in T$ and $v\in V_\la$, we see $\delta_{\la,\sigma}=(\delta_B)\downarrow_{V_\la}^t$.  But since $\delta_B$ has order dividing $2$, this completes the proof. 
\end{proof}

We next show that $\Lambda$ is further $\gal$-equivariant in the case of $\type{E}_7$ or $\type{B}_n$, when restricted to the members of $\irr(T)$ such that $\mathcal{E}(G, T, \la)$ contains odd-degree characters.

\begin{proposition}\label{prop:equivextB}
Let $G=\bG^F$ such that $\bG$ is simple of simply connected type $\type{B}_n$ with $n\geq3$ or $\type{E}_7$.  Assume $q$ is odd and that $\mathcal{E}(G, T,\la)\cap\irr_{2'}(G)$ is nonempty.  Then $\delta_{\la,\sigma}=1$ for any $\sigma\in\gal$.  
\end{proposition}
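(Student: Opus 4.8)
The plan is to reduce, via \prettyref{lem:sinvol} and \prettyref{lem:extRla}, to a single computation on $W(\la)/R(\la)$. First I would fix $\chi\in\mathcal E(G,T,\la)\cap\irr_{2'}(G)$ and let $s\in T^\ast$ be the semisimple element corresponding to $\la$ under $\irr(T)\cong T^\ast$, so that $\chi\in\mathcal E(G,s)$ and, by \prettyref{lem:W(s)}, $W(\la)\cong W(s)$ with $R(\la)\cong W^\circ(s)$ and $W(\la)/R(\la)\cong(\cen_{\bG^\ast}(s)/\cen_{\bG^\ast}^\circ(s))^{F^\ast}$. Since $\bG$ is of type $\type B_n$ with $n\ge 3$ or $\type E_7$, \prettyref{lem:sinvol} applies to $\chi$ and gives $s^2=1$; as the isomorphism $\irr(T)\cong T^\ast$ is order-preserving, this is equivalent to $\la^2=1$ in $\irr(T)$, i.e.\ $\la$ is rational-valued. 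Hence $\la^\sigma=\la$ for every $\sigma\in\gal$, and $\delta_{\la,\sigma}$ is the (order-$\le 2$) linear character of $W(\la)$ with $\Lambda(\la)^\sigma=\delta_{\la,\sigma}\Lambda(\la)$. By \prettyref{lem:extRla} we already have $R(\la)\le\ker\delta_{\la,\sigma}$, so it remains only to see that $\delta_{\la,\sigma}$ is trivial on a set of representatives of $W(\la)/R(\la)$.

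For $\bG$ of type $\type E_7$ I would argue exactly as in the proof of \prettyref{prop:Clambdaeven}: analysing which semisimple involutions of $\bG^\ast$ centralize a Sylow $2$-subgroup of $G^\ast$ shows that either $s=1$ or $\cen_{\bG^\ast}(s)$ is connected of type $\type D_6+\type A_1$. In both cases $(\cen_{\bG^\ast}(s)/\cen_{\bG^\ast}^\circ(s))^{F^\ast}=1$, hence $W(\la)=R(\la)\le\ker\delta_{\la,\sigma}$, and $\delta_{\la,\sigma}=1$.

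For $\bG$ of type $\type B_n$ I would proceed as follows. If $W(\la)=R(\la)$ we finish as before, so assume $\cen_{\bG^\ast}(s)$ is disconnected. Running through the semisimple involutions of the adjoint group $\bG^\ast$ of type $\type C_n$ shows that the only possibility is $\cen_{\bG^\ast}^\circ(s)$ of type $\type C_m+\type C_m$ with $n=2m$: the other disconnected case, $\cen_{\bG^\ast}^\circ(s)$ of type $\type A_{n-1}$, has $2\mid[W:W(\la)]=2^{n-1}$ and is ruled out by \prettyref{lem:MS7.9}. In the remaining case $W(\la)/R(\la)\cong\Z/2$ and $[W:W(\la)]=\tfrac12\binom{2m}{m}$, and since $\mathcal E(G,T,\la)$ contains a $2'$-character \prettyref{lem:MS7.9} forces $2\nmid\tfrac12\binom{2m}{m}$, i.e.\ $m$ is a power of $2$; because $n=2m\ge 3$ this makes $m$ even. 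The nontrivial coset of $W(\la)/R(\la)$ is represented by the ``block swap'' $w_c=\prod_{i=1}^m s_{\alpha_i}\in W$ with $\alpha_i=e_i-e_{m+i}$ (which fixes $\la$ and stabilizes $\Delta_\la$), and it lifts to $v_c:=\prod_{i=1}^m n_{\alpha_i}(1)\in V_\la$. The roots $\alpha_i$ are pairwise orthogonal with no sum or difference a root, so the $n_{\alpha_i}(1)$ commute and $v_c^2=\prod_{i=1}^m n_{\alpha_i}(1)^2=\prod_{i=1}^m h_{\alpha_i}(-1)\in H$; a direct computation from the $\Sp_{2n}/\mathrm{PSp}_{2n}$-description of $s$ gives $\la(h_{\alpha_i}(-1))=-1$, hence $\la(v_c^2)=(-1)^m=1$. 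Since $\Lambda(\la)(v_c)=\Lambda_0(\la\downarrow_H)(v_c)$ (from $\Lambda(\la)(tv)=\la(t)\Lambda_0(\la\downarrow_H)(v)$) and $\Lambda_0(\la\downarrow_H)$ is a homomorphism extending $\la\downarrow_H$, we get $\Lambda(\la)(v_c)^2=\la(v_c^2)=1$, so $\Lambda(\la)(v_c)\in\{\pm1\}$ is rational and $\delta_{\la,\sigma}(w_c)=\Lambda(\la)(v_c)^\sigma/\Lambda(\la)(v_c)=1$; together with \prettyref{lem:extRla} this yields $\delta_{\la,\sigma}=1$.

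The step I expect to require the most care is the type-$\type B_n$ bookkeeping: confirming that the only disconnected centralizer compatible with \prettyref{lem:MS7.9} is of type $\type C_m+\type C_m$ with $n=2m$; that the block swap $w_c$ really represents the nontrivial class of $W(\la)/R(\la)$ and that its chosen lift $v_c$ lies in $V_\la$; and the short computation $\la(h_{\alpha_i}(-1))=-1$. It is precisely at this point that the hypothesis $n\ge 3$ enters, since it forces $m$ even and hence $\la(v_c^2)=1$ — for $\type B_2\cong\type C_2$ one would have $m=1$ and $\la(v_c^2)=-1$, and the argument would break, consistent with type $\type C$ being treated separately. Alternatively, if one verifies that the auxiliary extension map $\Lambda_0$ for $H\lhd V$ may be chosen with values in $\{\pm1\}$ (as in the construction recalled in the proof of \prettyref{lem:exceptLambda}), then $\Lambda(\la)$ is rational-valued outright and this last computation can be bypassed.
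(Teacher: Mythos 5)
Your approach matches the paper's: the $\type{E}_7$ case is handled identically via centralizer analysis, and for $\type{B}_n$ you reduce (just as the paper does) to the disconnected case $W(\la)\cong(\type{C}_m\times\type{C}_m).2$ with $m$ a power of $2$, lift the block-swap generator of $C(\la)$ to $V_\la$, and show $\Lambda(\la)$ is rational on it. One small slip: $\la(h_{e_i-e_{m+i}}(-1))$ equals $(-1)^{(q-1)/2}$, not $-1$ (it is $+1$ when $q\equiv 1\pmod 4$); since $m$ is even this does not change the conclusion, as $\la(v_c^2)=\bigl((-1)^{(q-1)/2}\bigr)^m=1$ in either case, so $\Lambda(\la)(v_c)\in\{\pm1\}$ and the argument closes exactly as in the paper. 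Your closing remark — that $m=1$ (type $\type{C}_2$) would give $\la(v_c^2)=(-1)^{(q-1)/2}$, which is $-1$ for $q\equiv3\pmod4$ and so breaks the rationality — is a correct diagnosis of why type $\type{C}$ needs the separate treatment the paper gives it.
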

\begin{proof}

Note that if $\bG$ is type $\type{E}_7$, then the proof of \prettyref{prop:Clambdaeven} yields $R(\la)=W(\la)$ for any $\la\in\irr(T)$ such that $\mathcal{E}(G, T,\la)\cap\irr_{2'}(G)\neq \emptyset$, and hence $\delta_{\la,\sigma}=1$ for any $\sigma\in\gal$ in this case.

 So let $\bG$ be of type $\type{B}_n$, and assume that $R(\la)\neq W(\la)$, as otherwise we have $\delta_{\la,\sigma}=1$ trivially by appealing to \prettyref{lem:extRla}. Then $G^\ast$ is type $\type{C}_n$ and $W(\la)\cong W_{G^\ast}(s)=(\type{C}_{n/2}\times \type{C}_{n/2}).2$ with $n=2^a\geq 4$, from the proof of \cite[Lemma 4.11]{SFgaloisHC}. (See also \cite[Lemma 7.6]{MalleSpathMcKay2}.)  

Let $\Delta=\{\alpha_1,\ldots,\alpha_n\}$ be a system of simple roots for $\bG$, with labeling as in \cite[Remark 1.8.8]{gorensteinlyonssolomonIII}, so that $\alpha_i=e_i-e_{i+1}$ for $1\leq i<n$ and $\alpha_n=e_n$, where $\{e_1, \ldots, e_n\}$ is an orthonormal basis for $\R^n$.  The following can be seen using the relevant Chevalley  relations, found for example in \cite[Theorem 1.12.1]{gorensteinlyonssolomonIII}.

 Let $\zeta$ be a generator for $\F_q^\times$. Then $\lambda$ can be described by $\lambda(h_{\alpha_{n/2}}(\zeta))=-1$ and $\lambda(h_{\alpha_i}(\zeta))=1$ for $i\neq n/2$. Hence $R(\la)\cong\type{C}_{n/2}^2$ is generated by the image in $\bg{W}$ of the elements $n_{e_i}(-1)$ for $1\leq i\leq n$, together with the elements $n_{\alpha_i}(-1)$ for $1\leq i<n$ with $i\neq n/2$ and   
%
%
 $C(\la)$ is generated by the image in $\bg{W}$ of the element  
{\[c:=n_{e_1-e_{n/2+1}}(-1)n_{e_2-e_{n/2+2}}(-1)\cdots n_{e_{n/2}-e_{n}}(-1).\]}


Now, we have $h_{e_i-e_{n/2+i}}(-1)=h_{\alpha_i}(-1)\cdots h_{\alpha_{n/2+i-1}}(-1)$ for each $1\leq i\leq n/2$, so that 
\[\la(h_{e_i-e_{n/2+i}}(-1))=\la(h_{\alpha_{n/2}}(-1))=\la(h_{\alpha_{n/2}}(\zeta)^{(q-1)/2})= (-1)^{(q-1)/2}.\]  
Writing $\Lambda_\la:=\Lambda(\la)$, this yields that $\Lambda_\lambda(c)^2=\lambda(c^2)=\prod_{i=1}^{n/2}\la(h_{e_i-e_{n/2+i}}(-1))=(-1)^{n(q-1)/4}=1$.  In particular, $\Lambda_\la(c)\in\{\pm1\}$ is necessarily fixed by any $\sigma\in\gal$, so $\delta_{\la,\sigma}=1$ for all $\sigma\in\gal$ by applying \prettyref{lem:extRla}.
\end{proof}

\section{The Generic Algebra and the Character $\eta^{(\sigma)}$}\label{sec:genericalg}

Here we recall some relevant details regarding the construction of generic algebras and their specializations.  For more, we refer the reader to \cite{howlettlehrer83}, though we remark that here we work over $\overline{\mathbb{\Q}}$, rather than $\mathbb{C}$.

Keep the notation of \prettyref{sec:HCseries}. Let $(L, \la)$, with $\la\in \irr(L)$, be a cuspidal pair for $G$.   Let $\mathbf{u}:=\{u_\alpha\mid \alpha\in\Delta_\la\}$ be a set of indeterminates such that $u_\alpha=u_\beta$ if and only if $\alpha$ and $\beta$ are $W(\la)$-conjugate.  For $m\in\Q$, we will write $\mathbf{u}^m$ for the set $\{u_\alpha^m\mid\alpha\in\Delta_\la\}$ and set $\sqrt{\mathbf{u}}:=\mathbf{u}^{1/2}$. Let $\mathbb{A}_0:=\overline{\Q}[\mathbf{u}, \mathbf{u}^{-1}]$, let $\mathbb{K}$ be an algebraic closure of the quotient field of $\mathbb{A}_0$ containing $\Q(\sqrt{\mathbf{u}})$, and let $\mathbb{A}$ be the integral closure of $\mathbb{A}_0$ in $\mathbb{K}$. 

 We construct the so-called \emph{generic algebra} $\mathcal{H}$ as the free $\mathbb{A}$-module with basis $\{a_w\mid w\in W(\la)\}$, together with the unique $\mathbb{A}$-bilinear associative multiplication satisfying \cite[Theorem 4.1]{howlettlehrer83}.  Let $\mathbb{K}\mathcal{H}$ denote the $\mathbb{K}$-algebra $\mathbb{K}\otimes_{\mathbb{A}} \mathcal{H}$, which is semisimple by \cite[Corollary 4.6]{howlettlehrer83}. 
 For a ring homomorphism $h\colon \mathbb{A}\rightarrow\overline{\Q}$, we obtain the so-called \emph{specialized algebra} $\mathcal{H}^h:=\overline{\Q}\otimes_{\mathbb{A}}\mathcal{H}$ with basis $\{1\otimes a_w\mid w\in W(\la)\}$ and structure constants obtained by applying $h$ to the structure constants of $\mathcal{H}$.  If $h$ is the extension to $\mathbb{A}$ of a homomorphism $h_0\colon \mathbb{A}_0\rightarrow\overline{\Q}$ and $\mathcal{H}^h$ is semisimple, then \cite[Proposition 4.7]{howlettlehrer83} yields that there is a bijection $\varphi\mapsto \varphi^h$ between $\mathbb{K}$-characters corresponding to simple $\mathbb{K}\mathcal{H}$-modules and $\overline{\Q}$-characters of simple $\mathcal{H}^h$-modules, where $\varphi^h$ is defined by $\varphi^h(1\otimes a_w)=h(\varphi(a_w))$ for each $w\in W(\la)$.  
 
 In particular, the homomorphisms $f_0, g_0\colon \mathbb{A}_0\rightarrow\overline{\Q}$ defined by $f_0(u_\alpha)=p_{\alpha, \la}$ and $g_0(u_\alpha)=1$, respectively, yield specializations $\mathcal{H}^f$ and $\mathcal{H}^g$ satisfying $\mathcal{H}^f\cong \mathrm{End}_{\overline{\Q}G}(\HC_L^G(\la))$ and $\mathcal{H}^g\cong \overline{\Q}W(\la)$, using \cite[Lemma 4.2]{howlettlehrer83}.  (We remark that the  2-cycle in the original bijection may be taken to be trivial by \cite{geck93}.)

For the remainder of this section, let $L=T$ be a maximally split torus, so that $\mathcal{E}(G, L, \la)=\mathcal{E}(G, T, \la)$ is a principal series.  In this situation, we may follow the treatment of \cite{HowlettKilmoyer}.  In particular, let $\mathcal{H}_0$ denote the subalgebra of $\mathcal{H}$ generated by $\{a_w\mid w\in R(\la)\}$, so that $\mathcal{H}_0$ is the generic Iwahori--Hecke algebra corresponding to the Coxeter group $R(\la)$. 

We remark that since $R(\la)$ is a Weyl group, all of its characters are rational-valued, using for example \cite[Theorems 5.3.8, 5.4.5, 5.5.6, and Corollary 5.6.4]{GeckPfeiffer}.  We will use this fact throughout the remainder of the paper, and our next observations aim to give an analogue to this fact for the characters of $\mathcal{H}_0$. 

\begin{lemma}\label{lem:K0splittingfieldH0}
With the notation above, let $\mathbb{K}_0$ be $\Q(\mathbf{u})$ in the case that $R(\la)$ contains no component of type $\type{G}_2, \type{E}_7,$ or $\type{E}_8$, and let $\mathbb{K}_0:=\Q(\sqrt{\mathbf{u}})$ otherwise.  Then $\mathbb{K}_0$ is a splitting field for $\mathcal{H}_0$.  
\end{lemma}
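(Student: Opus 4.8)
The plan is to reduce the claim to the irreducible components of $R(\la)$ and then quote the known description of the splitting fields of generic Iwahori--Hecke algebras of Weyl groups.

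First, decompose $R(\la)=\prod_i R_i$ into irreducible Weyl groups, with $R_i$ having simple system $\Delta_i\subseteq\Delta_\la$; then $\mathcal{H}_0\cong\bigotimes_i\mathcal{H}_{0,i}$, where $\mathcal{H}_{0,i}$ is the generic Iwahori--Hecke algebra of $R_i$ in the indeterminates $\mathbf{u}_i:=\{u_\alpha\mid\alpha\in\Delta_i\}$, and these algebras have their structure constants in $\Q(\mathbf{u})$.  By the Tits deformation argument (the specialization $u_\alpha\mapsto1$ producing $\overline{\Q}R(\la)$), $\mathbb{K}\mathcal{H}_0$ is split semisimple over the algebraically closed field $\mathbb{K}$; combined with the fact that a tensor product over a field of split semisimple algebras is split semisimple ($M_m(k)\otimes_k M_n(k)\cong M_{mn}(k)$) and that a splitting field remains one after any field extension, it suffices to prove that each $\mathcal{H}_{0,i}$ has $\Q(\mathbf{u}_i)$ as a splitting field when $R_i$ is not of type $\type{G}_2$, $\type{E}_7$, or $\type{E}_8$, and $\Q(\sqrt{\mathbf{u}_i})$ as a splitting field otherwise.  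Granting this, $\mathbb{K}_0$ contains the compositum of all these subfields (the compositum being exactly $\Q(\mathbf{u})$ in the first case), so $\mathbb{K}_0$ is a splitting field for $\mathcal{H}_0$.

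The remaining task is to run through the irreducible types, appealing to \cite[Chapters~8--9]{GeckPfeiffer}.  For $R_i$ of classical type $\type{A}$, $\type{B}$, or $\type{D}$, the seminormal matrix models of Hoefsmit and of Dipper--James realize every simple module over $\Q(\mathbf{u}_i)$ (with one or two indeterminates, as appropriate), and for $R_i$ of type $\type{E}_6$ or $\type{F}_4$ the explicit character tables of Geck again give $\Q(\mathbf{u}_i)$ as a splitting field.  For $R_i$ of type $\type{G}_2$, note that in the \emph{generic} algebra this component carries two \emph{distinct} parameters $u_1\neq u_2$, since the two simple roots of a $\type{G}_2$ system have different lengths --- hence are never $W(\la)$-conjugate --- and the $\type{G}_2$ diagram has no nontrivial automorphism; the two $2$-dimensional simple modules then have character values involving $\sqrt{u_1u_2}$, so $\Q(\sqrt{\mathbf{u}_i})$ is the (minimal) splitting field.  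Finally, for $R_i$ of type $\type{E}_7$ or $\type{E}_8$ the character table of the one-parameter generic Hecke algebra forces $\Q(\sqrt{u})$.  In every case $\Q(\sqrt{\mathbf{u}_i})\subseteq\Q(\sqrt{\mathbf{u}})$, so the claim follows.

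The only step that is not pure bookkeeping is the reduction in the second paragraph --- that $\mathbb{K}\mathcal{H}_0$ is split semisimple and that a splitting field can be detected on the tensor factors --- but this is a routine consequence of the Tits deformation theorem together with elementary facts about semisimple algebras over a field; everything after that is a matter of quotation from \cite{GeckPfeiffer}.
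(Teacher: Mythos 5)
Your proposal is correct and is, in substance, the same route the paper takes: the paper's entire proof is the one-line citation of \cite[Theorems~6.3.8 and~9.3.5, Example~9.3.4]{GeckPfeiffer}, which are exactly the three ingredients you reconstruct --- the tensor/product reduction to irreducible components (Theorem~6.3.8), the general statement that adjoining square roots of the parameters always yields a splitting field (Theorem~9.3.5), and the case-by-case discussion of which irreducible types actually require the square roots (Example~9.3.4). Your write-up just fills in the reduction mechanics (Tits deformation, $M_m(k)\otimes_k M_n(k)\cong M_{mn}(k)$, stability of splitting under extension) that Geck--Pfeiffer's Theorem~6.3.8 packages.

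One small inaccuracy that does not affect the argument: in the $\type{G}_2$ case with two unequal parameters $u_1\neq u_2$, the character values of the two $2$-dimensional representations generate $\Q(u_1,u_2,\sqrt{u_1u_2})$, so the \emph{minimal} splitting field is $\Q(u_1,u_2,\sqrt{u_1u_2})$, which is a proper subfield of $\Q(\sqrt{u_1},\sqrt{u_2})=\Q(\sqrt{\mathbf{u}_i})$; your parenthetical ``(minimal)'' should be dropped. The containment $\Q(u_1,u_2,\sqrt{u_1u_2})\subseteq\Q(\sqrt{\mathbf{u}})$, which is all the lemma needs, of course still holds.
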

\begin{proof}
This follows directly from \cite[Example 9.3.4 and Theorems 6.3.8 and 9.3.5]{GeckPfeiffer}.
\end{proof}

Now, using the discussion preceeding \cite[Theorem 3.7]{HowlettKilmoyer}, we may identify the group algebra $\mathbb{K} C(\la)$ with the subalgebra of $\mathbb{K}\mathcal{H}$ generated by $\{a_d\mid d\in C(\la)\}$.  In this situation, $C(\la)$ acts on $\mathbb{K}\mathcal{H}_0$ via $d(a_w)=a_{dwd^{-1}}$ for $w\in R(\la)$ and $d\in C(\la)$, and on the irreducible $\mathbb{K}$-characters $\psi_0$ of $\mathbb{K}\mathcal{H}_0$ via $\psi_0^d(a_w)=\psi_0(a_{dwd^{-1}})$.  
For an irreducible $\mathbb{K}$-character $\psi_0$ of $\mathbb{K}\mathcal{H}_0$, let $\mathbb{K}C(\la)_{\psi_0}\mathcal{H}_0$ denote the subalgebra generated by $\{a_{dw}\mid d\in C(\la)_{\psi_0}; w\in R(\la)\}$.  Then \cite[Theorem 3.7]{HowlettKilmoyer} implies that $\psi_0$ extends to an irreducible character of $\mathbb{K}C(\la)_{\psi_0}\mathcal{H}_0$. 

Further, \cite[Lemma 3.12]{HowlettKilmoyer} and the discussion preceeding it provides an analogue of Clifford theory for the characters of $\mathbb{K}\mathcal{H}$.  
Namely, if $\psi$ is an extension of $\psi_0$ as above, then any extension of $\psi_0$ to $\mathbb{K}C(\la)_{\psi_0}\mathcal{H}_0$ is of the form $\beta\psi$ for $\beta\in\irr(C(\la)_{\psi_0})$, where 
$(\beta\psi)(a_{dw}):=\beta(d)\psi(a_{dw})$ for $d\in C(\la)_{\psi_0}$ and $w\in R(\la)$.  
Further, any irreducible character of $\mathbb{K}\mathcal{H}$ is induced from such a character, where for $\psi$ a character of $\mathbb{K}C(\la)_{\psi_0}\mathcal{H}_0$, 
the induced character $\Ind_{C(\la)_{\psi_0}\mathcal{H}_0}^\mathcal{H} \psi$ is defined by 
\begin{equation}\label{eq:inducedHecke}
\Ind_{C(\la)_{\psi_0}\mathcal{H}_0}^\mathcal{H} \psi (a_w)=\frac{1}{|C(\la)_{\psi_0}|} \sum \psi(a_{dwd^{-1}})
\end{equation}
for $w\in W(\la)$, where the sum is taken over those $d\in C(\la)$ such that $a_{dwd^{-1}}\in C(\la)_{\psi_0}\mathcal{H}_0$.


\begin{lemma}\label{lem:K0splittingfieldH}
Let $\rho_0$ be an irreducible $\mathbb{K}_0$-representation of $\mathbb{K}_0\mathcal{H}_0$ affording the character $\psi_0$, with notation as in \prettyref{lem:K0splittingfieldH0}.  Then there is an extension $\rho$ of $\rho_0$ to an irreducible representation of $\mathbb{K}C(\la)_{\psi_0}\mathcal{H}_0$ 
that can be afforded over $\mathbb{K}_0$.
\end{lemma}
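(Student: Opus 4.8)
The key point is that some extension is available for free: by the Howlett--Kilmoyer analogue of Clifford theory recalled above (in particular \cite[Theorem 3.7]{HowlettKilmoyer}), together with the fact noted in the text that the $2$-cocycle in the Howlett--Lehrer bijection may be taken trivial (\cite{geck93}), the character $\psi_0$ extends to an irreducible character of $\mathbb{K}C(\la)_{\psi_0}\mathcal{H}_0$, so $\rho_0$ extends to an irreducible $\mathbb{K}$-representation. The content of the lemma is therefore to descend the field of realization to $\mathbb{K}_0$, and the plan is a standard Clifford-theoretic descent that exploits \prettyref{lem:K0splittingfieldH0}, i.e.\ that $\mathbb{K}_0$ is already a splitting field for $\mathcal{H}_0$.

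Concretely, write $B_0$ for the $\mathbb{K}_0$-span of $\{a_{dw}\mid d\in C(\la)_{\psi_0},\ w\in R(\la)\}$, so that $\mathbb{K}C(\la)_{\psi_0}\mathcal{H}_0=\mathbb{K}\otimes_{\mathbb{K}_0}B_0$ and $B_0=\bigoplus_{d\in C(\la)_{\psi_0}}a_d\,(\mathbb{K}_0\mathcal{H}_0)$ is a crossed product of $\mathbb{K}_0\mathcal{H}_0$ by $C(\la)_{\psi_0}$, with $a_d a_w a_d^{-1}=a_{dwd^{-1}}$; note that $C(\la)_{\psi_0}$ acts trivially on the coefficient field $\mathbb{K}_0$, since $C(\la)\le W(\la)$ fixes every indeterminate $u_\alpha$ (because $u_\alpha=u_\beta$ whenever $\alpha,\beta$ are $W(\la)$-conjugate). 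First I would form $N:=\Ind_{\mathbb{K}_0\mathcal{H}_0}^{B_0}\rho_0$. As $\psi_0^d=\psi_0$ for $d\in C(\la)_{\psi_0}$ and $\mathbb{K}_0$ splits $\mathcal{H}_0$, each twist ${}^{d}\rho_0$ is isomorphic to $\rho_0$ over $\mathbb{K}_0$ and $\Hom_{\mathbb{K}_0\mathcal{H}_0}(\rho_0,{}^{d}\rho_0)$ is one-dimensional over $\mathbb{K}_0$; a routine Mackey/Clifford computation then identifies $\End_{B_0}(N)$ with a twisted group algebra $\mathbb{K}_0^{\gamma}[C(\la)_{\psi_0}]$ for a class $[\gamma]\in H^2(C(\la)_{\psi_0},\mathbb{K}_0^\times)$ (with trivial action), and one obtains: a $\mathbb{K}_0$-rational extension of $\rho_0$ to $B_0$ exists if and only if $\mathbb{K}_0^{\gamma}[C(\la)_{\psi_0}]$ has a one-dimensional $\mathbb{K}_0$-module, i.e.\ if and only if $[\gamma]=0$.

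The remaining step, showing $[\gamma]=0$, is the one I expect to be the main obstacle. By \prettyref{lem:W(s)}, $C(\la)_{\psi_0}$ is isomorphic to a subgroup of $(\zen(\bG)/\zen(\bG)^\circ)^{F^\ast}$, hence abelian (and, in the cases relevant here, of order dividing $4$), so $[\gamma]$ is detected on cyclic subgroups: for $\langle d\rangle$ of order $m$ the obstruction is the class of $\Phi_d^{\,m}$ in $\mathbb{K}_0^\times/(\mathbb{K}_0^\times)^m$, where $\Phi_d$ is a $\mathbb{K}_0$-intertwiner of $\rho_0$ with ${}^{d}\rho_0$. Writing $\rho(a_d)=\mu_d\Phi_d$ with $a_d\in\mathcal{H}$ the normalized basis element for which $a_d^{\,m}=a_1$ and $\mu_d\in\mathbb{K}^\times$, one has $\mu_d^{\,m}=\Phi_d^{-m}\in\mathbb{K}_0^\times$, and the obstruction vanishes precisely when $\mu_d\in\mathbb{K}_0^\times\cdot\mu_m(\mathbb{K})$. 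I would verify this from the explicit description of the $a_d$ in \cite{howlettlehrer83,HowlettKilmoyer}: the scalars involved are, up to sign and a rational constant, Laurent monomials in $\mathbf{u}$, and the choice of $\mathbb{K}_0$ in \prettyref{lem:K0splittingfieldH0} is precisely arranged so that the only square roots that can occur --- which happen for components of type $\type{G}_2$, $\type{E}_7$, or $\type{E}_8$ --- already lie in $\mathbb{K}_0$; the residual rational scalar is then pinned down by specializing $\mathbf{u}\mapsto 1$, under which $B_0$ becomes the group algebra $\Q[R(\la)\rtimes C(\la)_{\psi_0}]$ and $\rho_0$ becomes the rational-valued Weyl-group representation of $R(\la)$ attached to $\psi_0$, which extends over $\Q$ (here one uses the rationality of Weyl-group character theory, already invoked in the excerpt, together with an explicit analysis of the small group $C(\la)_{\psi_0}$). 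This gives $[\gamma]=0$, hence the desired $\mathbb{K}_0$-rational extension.
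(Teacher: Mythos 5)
Your plan identifies the right issue --- extensibility over $\mathbb{K}$ is already given by Howlett--Kilmoyer, and the content is to descend the field of realization to $\mathbb{K}_0$ --- but it then takes a considerably heavier and, as written, incomplete route than the paper. You reformulate the descent as the vanishing of a $2$-cocycle class $[\gamma]\in H^2\bigl(C(\la)_{\psi_0},\mathbb{K}_0^\times\bigr)$ attached to the $\mathbb{K}_0$-crossed-product $B_0$, and you yourself flag the verification that $[\gamma]=0$ as ``the main obstacle''; the final paragraph is a sketch (``I would verify this from the explicit description of the $a_d$\dots'', ``the residual rational scalar is then pinned down by specializing $\mathbf{u}\mapsto 1$'') rather than a proof. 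As it stands this is a genuine gap: you never actually establish that the scalars $\mu_d$ lie in $\mathbb{K}_0^\times\cdot\mu_m(\mathbb{K})$, and the specialization argument in particular needs care, since triviality of a cocycle after specialization does not by itself give triviality before specialization.

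The paper's proof is much shorter and sidesteps the cohomological bookkeeping entirely. It observes that in Howlett--Kilmoyer's Theorems~3.7 and 3.8 the extension $\rho$ of $\rho_0^{\mathbb{K}}$ is built \emph{constructively}: the operator $\rho(a_d)$ for $d\in C(\la)_{\psi_0}$ is defined directly from an intertwiner realizing the equivalence $\rho_0^{\mathbb{K}}\cong(\rho_0^{\mathbb{K}})^d$. Since $\rho_0$ and $\rho_0^d$ are $\mathbb{K}_0$-representations that become equivalent over $\mathbb{K}$, the Noether--Deuring theorem (Curtis--Reiner, Theorem~29.7) gives an intertwiner already over $\mathbb{K}_0$; \prettyref{lem:K0splittingfieldH0} (that $\mathbb{K}_0$ is a splitting field for $\mathcal{H}_0$) is what guarantees these intertwiners are determined up to $\mathbb{K}_0$-scalars. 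Running the Howlett--Kilmoyer construction with these $\mathbb{K}_0$-intertwiners then produces the extension over $\mathbb{K}_0$ directly --- there is no separate cocycle to kill because the construction has already normalized it. In short: your reduction to a Schur-multiplier computation is a legitimate general framework, but the paper's argument shows that in this situation the computation is unnecessary; the explicit Howlett--Kilmoyer construction descends as soon as the intertwiners do. If you wanted to pursue your route, you would need to make the $\mathbf{u}\mapsto 1$ specialization argument rigorous (and address why vanishing after specialization lifts), which is more work than the appeal to Noether--Deuring.
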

\begin{proof}

Let $\rho_0^{\mathbb{K}}$ denote $\rho_0$ viewed as a $\mathbb{K}$-representation. In \cite[Theorems 3.7 and 3.8]{HowlettKilmoyer}, the $\mathbb{K}$-character $\psi_0$ is shown to extend to a $\mathbb{K}$-character $\psi$ of $\mathbb{K}C(\la)_{\psi_0}\mathcal{H}_0$ 
 by constructing a representation $\rho$ extending $\rho_0^\mathbb{K}$.  In particular, $\rho(d)$ for $d\in C(\la)_{\psi_0}$ is constructed using the equivalence properties of $\rho_0^{\mathbb{K}}$.
 Since $\psi_0=\psi_0^d$ for every $d\in C(\la)_{\psi_0}$, we have $\rho_0^{\mathbb{K}}$ is equivalent over $\mathbb{K}$ to $(\rho_0^{\mathbb{K}})^d$, and hence $\rho_0$ and $\rho_0^d$ are also equivalent over $\mathbb{K}_0$, using for example \cite[Theorem 29.7]{curtisreiner62}.  But in the proof of \cite[Theorem 3.8]{HowlettKilmoyer}, this means that the construction for $\rho$ can be done over $\mathbb{K}_0$, rather than $\mathbb{K}$, yielding the statement.
\end{proof}

\begin{corollary}
Keep the notation as above. Let $\psi_0^f$ be an irreducible character of $\mathcal{H}_0^f$.  Then there is an irreducible character of $\mathcal{H}^f$ lying over $\psi_0^f$ whose values lie in $\Q(\sqrt{q})$.  If  $R(\la)$ contains no component of type $\type{G}_2, \type{E}_7, $ or $\type{E}_8$, then there is further such a character with values in $\Q$. 

Similarly, if $\psi_0^g$ is an irreducible character of $\mathcal{H}_0^g$, then there is an irreducible character of $\mathcal{H}^g$ lying over $\psi_0^g$ with values in $\Q$. 
\end{corollary}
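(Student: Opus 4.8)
The plan is to lift the given irreducible character $\psi_0^h$ of $\mathcal{H}_0^h$ (with $h\in\{f,g\}$) to a generic character of $\mathbb{K}\mathcal{H}_0$, extend it and induce it up inside the generic algebra $\mathcal{H}$ by means of the Hecke-algebra Clifford theory of \cite{HowlettKilmoyer} — carrying out both the extension and the induction over the small field $\mathbb{K}_0$ via \prettyref{lem:K0splittingfieldH0} and \prettyref{lem:K0splittingfieldH} — and then to specialize the resulting character of $\mathbb{K}\mathcal{H}$ back down via $h$, reading the field of values off from the image of $\mathbb{K}_0$ under $h$.

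First I would observe that $\mathcal{H}_0$ is itself a generic algebra in the sense of \cite{howlettlehrer83} (the generic Iwahori--Hecke algebra of the Coxeter group $R(\la)$), and that both $\mathcal{H}_0^f$ (the Iwahori--Hecke algebra of $R(\la)$ at the parameters $p_{\alpha,\la}$, each a power of $q$) and $\mathcal{H}_0^g\cong\overline{\Q}R(\la)$ are split semisimple. So \cite[Proposition 4.7]{howlettlehrer83} applied to $\mathcal{H}_0$ gives an irreducible $\mathbb{K}$-character $\psi_0$ of $\mathbb{K}\mathcal{H}_0$ specializing to $\psi_0^h$. By \prettyref{lem:K0splittingfieldH0}, $\psi_0$ is afforded by a representation $\rho_0$ over $\mathbb{K}_0$; by \prettyref{lem:K0splittingfieldH}, $\rho_0$ extends to a representation $\rho$ of $\mathbb{K}C(\la)_{\psi_0}\mathcal{H}_0$ that is irreducible over $\mathbb{K}$ and is afforded over $\mathbb{K}_0$. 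Write $\psi$ for the character of $\rho$; then $\psi$ extends $\psi_0$ and has values in $\mathbb{K}_0$.

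Next I would set $\Psi:=\Ind_{C(\la)_{\psi_0}\mathcal{H}_0}^{\mathcal{H}}\psi$. By the Clifford theory recalled above (in particular \cite[Lemma 3.12]{HowlettKilmoyer}), $\Psi$ is an irreducible $\mathbb{K}$-character of $\mathbb{K}\mathcal{H}$ whose restriction to $\mathbb{K}\mathcal{H}_0$ contains $\psi_0$; and since formula \eqref{eq:inducedHecke} writes each $\Psi(a_w)$ as $|C(\la)_{\psi_0}|^{-1}$ times a $\Z$-linear combination of values of $\psi$, the character $\Psi$ again has values in $\mathbb{K}_0$, hence (being a character of a simple $\mathbb{K}\mathcal{H}$-module) in $\mathbb{K}_0\cap\mathbb{A}$. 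Now $\mathcal{H}^f\cong\End_{\overline{\Q}G}(\HC_T^G(\la))$ and $\mathcal{H}^g\cong\overline{\Q}W(\la)$ are semisimple, so \cite[Proposition 4.7]{howlettlehrer83} applied to $\mathcal{H}$ shows $\Psi^h$ is an irreducible character of $\mathcal{H}^h$ with $\Psi^h(1\otimes a_w)=h(\Psi(a_w))$. Because this bijection and the inclusion $\mathcal{H}_0^h\hookrightarrow\mathcal{H}^h$ are all described on the common basis $\{a_w\}$, restriction commutes with specialization, so $\Psi^h\downarrow_{\mathcal{H}_0^h}$ contains $\psi_0^h$; that is, $\Psi^h$ lies over $\psi_0^h$. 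Finally, for $h=g$ we have $g(\sqrt{u_\alpha})=1$, so $\Psi^g$ has values in $\Q$; for $h=f$ we have $f(\sqrt{u_\alpha})=\sqrt{p_{\alpha,\la}}\in\Q(\sqrt q)$ because $p_{\alpha,\la}$ is a power of $q$, so $\Psi^f$ has values in $\Q(\sqrt q)$, and when $R(\la)$ has no component of type $\type{G}_2,\type{E}_7,\type{E}_8$ we have $\mathbb{K}_0=\Q(\mathbf{u})$, whence $f(\mathbb{K}_0\cap\mathbb{A})\subseteq\Q$ and $\Psi^f$ has values in $\Q$.

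The step I expect to need the most care is the compatibility claim in the third paragraph: that passing from $\mathbb{K}$-characters to $\overline{\Q}$-characters via $h$ preserves the ``lies over'' relation, i.e.\ that $\Psi^h$ restricted to $\mathcal{H}_0^h$ still contains $\psi_0^h$. This is not conceptually deep, but it does require spelling out the bijection of \cite[Proposition 4.7]{howlettlehrer83} explicitly as $\varphi\mapsto(1\otimes a_w\mapsto h(\varphi(a_w)))$ on the common basis and checking that it intertwines the inclusions $\mathbb{K}\mathcal{H}_0\hookrightarrow\mathbb{K}\mathcal{H}$ and $\mathcal{H}_0^h\hookrightarrow\mathcal{H}^h$; once that is in hand, the field-of-values statements are immediate substitutions into $\mathbb{K}_0$.
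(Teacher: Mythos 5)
Your proposal is correct and takes essentially the same route as the paper: lift $\psi_0^h$ to the generic Hecke algebra $\mathbb{K}\mathcal{H}_0$, extend over $\mathbb{K}_0$ via \prettyref{lem:K0splittingfieldH}, induce to $\mathcal{H}$ using \eqref{eq:inducedHecke} (preserving $\mathbb{K}_0$-values), then specialize via $f$ or $g$. The only cosmetic difference is that the paper separates the case $C(\la)_{\psi_0}=C(\la)$ (where no induction is needed and it invokes \cite[Lemma~2.6]{HowlettKilmoyer} to get the integrality/field statement directly) from the general case, whereas you treat both uniformly by always inducing; you also spell out explicitly the compatibility of the Howlett--Lehrer specialization bijection with restriction to $\mathcal{H}_0$, which the paper leaves implicit.
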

\begin{proof}
Let $\psi_0$ be the corresponding character of $\mathbb{K}\mathcal{H}_0$ and let $\psi$ be the $\mathbb{K}_0$-character of $\mathbb{K}C(\la)_{\psi_0}\mathcal{H}_0$ extending $\psi$ guaranteed by \prettyref{lem:K0splittingfieldH}.  Then the statement follows immediately if $C(\la)_{\psi_0}=C(\la)$,  since $g(u_\alpha)=1$ and $f(u_\alpha)=p_{\alpha,\la}$ is a power of $q$ for each $\alpha\in\Delta_\la$, using \cite[Lemma 2.6]{HowlettKilmoyer}. Otherwise, we see by \eqref{eq:inducedHecke} that the character $\tau$ of $\mathcal{H}$ induced from $\psi$ will also have values in $\mathbb{K}_0$, and hence $\tau^f$ and $\tau^g$ will also take its values in the stated fields.
\end{proof}

Recall that we denote by $\mathfrak{f}$ the bijection $\mathfrak{f}\colon \irr(\mathrm{End}_{\overline{\Q}G}(\HC_T^G(\la))\rightarrow \irr(W(\la))$ induced by the specializations.  In other words, $\mathfrak{f}\colon \irr(\mathcal{H}^f)\rightarrow \irr(\mathcal{H}^g)$ is given by $\mathfrak{f}(\tau^f)=\tau^g$ for $\tau$ an irreducible character of the generic algebra $\mathcal{H}$.  Recall that for $\eta\in\irr(W(\la))$ and $\sigma\in\gal$, if $\phi\in\irr(\mathrm{End}_{\overline{\Q}G}(\HC_T^G(\la))$ is such that $\mathfrak{f}(\phi)=\eta$, then we write $\eta^{(\sigma)}$ for the character of $W(\la)$ such that $\mathfrak{f}(\phi^\sigma)=\eta^{(\sigma)}$.  Although one might hope that $\eta^{(\sigma)}=\eta^\sigma$ in general, this is known not to be the case. (For example there are situations in which every character of the Weyl group has values in $\Q$, but characters in the corresponding Iwahori--Hecke algebra take values in $\Q(\sqrt{q})$.) However, our main result of this section implies that this does hold in certain situations.

\begin{proposition}\label{prop:etasigma}
Let $\la\in\irr(T)$ and $\eta\in\irr(W(\la))$.  Let $\sigma\in\gal$, and further assume that $\sigma$ fixes $\Q(\sqrt{q})$ in the case that $R(\la)$ contains a component of type $\type{G}_2, \type{E}_7, $ or $\type{E}_8$. Then $\eta^\sigma=\eta^{(\sigma)}$.
\end{proposition}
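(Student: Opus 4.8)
The plan is to move the whole question onto the generic algebra $\mathcal{H}$ and to track how $\sigma$ permutes $\irr(\mathbb{K}\mathcal{H})$ through the two specializations $f$ and $g$. Recall that $\tau\mapsto\tau^f$ and $\tau\mapsto\tau^g$ are bijections from $\irr(\mathbb{K}\mathcal{H})$ onto $\irr(\mathcal{H}^f)$ and onto $\irr(\mathcal{H}^g)=\irr(W(\la))$, and that $\mathfrak{f}$ is the resulting bijection $\tau^f\mapsto\tau^g$. Pick $\tau\in\irr(\mathbb{K}\mathcal{H})$ with $\tau^f=\wt\eta$, so that $\tau^g=\eta$. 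By the definition of $\eta^{(\sigma)}$ it then suffices to produce a single $\tau_\sigma\in\irr(\mathbb{K}\mathcal{H})$ satisfying both $\tau_\sigma^f=(\tau^f)^\sigma$ and $\tau_\sigma^g=(\tau^g)^\sigma$; for then $\eta^{(\sigma)}=\mathfrak{f}((\tau^f)^\sigma)=\mathfrak{f}(\tau_\sigma^f)=\tau_\sigma^g=(\tau^g)^\sigma=\eta^\sigma$.

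To build $\tau_\sigma$, I would use the Clifford-theoretic description of $\irr(\mathbb{K}\mathcal{H})$ recalled above: write $\tau=\Ind_{C(\la)_{\psi_0}\mathcal{H}_0}^{\mathcal{H}}(\beta\psi)$ with $\psi_0\in\irr(\mathbb{K}\mathcal{H}_0)$, $\psi$ an extension of $\psi_0$ to $\mathbb{K}C(\la)_{\psi_0}\mathcal{H}_0$, and $\beta\in\irr(C(\la)_{\psi_0})$. By \prettyref{lem:W(s)} the group $C(\la)$, and hence $C(\la)_{\psi_0}$, is abelian, so $\beta$ is linear and its Galois conjugate $\beta^\sigma$ again lies in $\irr(C(\la)_{\psi_0})$; I set $\tau_\sigma:=\Ind_{C(\la)_{\psi_0}\mathcal{H}_0}^{\mathcal{H}}(\beta^\sigma\psi)$, which is again irreducible by \cite[Lemma 3.12]{HowlettKilmoyer}. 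The key input is \prettyref{lem:K0splittingfieldH}: $\psi$ may be taken to be afforded over $\mathbb{K}_0$, which by \prettyref{lem:K0splittingfieldH0} is $\Q(\mathbf{u})$ unless $R(\la)$ has a component of type $\type{G}_2$, $\type{E}_7$, or $\type{E}_8$, in which case it is $\Q(\sqrt{\mathbf{u}})$. Since the values of characters of $\mathbb{K}\mathcal{H}$ on the basis $\{a_w\}$ lie in $\mathbb{A}$, i.e.\ are integral over $\mathbb{A}_0$ (cf.\ \cite[Proposition 4.7]{howlettlehrer83}), this forces $f(\psi(a_{dw}))\in\Q$ in the first case and $f(\psi(a_{dw}))\in\Q(\sqrt{q})$ in the second (each $p_{\alpha,\la}$ being a power of $q$), while $g(\psi(a_{dw}))\in\Q$ in all cases. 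Hence, by the hypothesis on $\sigma$ — rationality being automatic and $\sigma$ fixing $\Q(\sqrt q)$ in the $\type{G}_2/\type{E}_7/\type{E}_8$ case — all of these specialized values of $\psi$ are fixed by $\sigma$.

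With this in hand, the formula \eqref{eq:inducedHecke}, which expresses induced-character values as rational-coefficient linear combinations of the values of the inducing character and therefore commutes with the ring homomorphisms $f$ and $g$, gives for each $w\in W(\la)$
\[(\tau^f)^\sigma(a_w)=\frac{1}{|C(\la)_{\psi_0}|}\sum_d\sigma\!\bigl(\beta(w_c)\bigr)\,f\!\bigl(\psi(a_{dwd^{-1}})\bigr)=\tau_\sigma^f(a_w),\]
where $w_c$ denotes the $C(\la)$-part of $w$, the sum is as in \eqref{eq:inducedHecke}, and the last equality uses $\sigma(\beta(w_c))=\beta^\sigma(w_c)$ together with the $\sigma$-invariance of the factors $f(\psi(a_{dwd^{-1}}))$. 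The identical computation with $g$ in place of $f$ gives $(\tau^g)^\sigma=\tau_\sigma^g$, which produces the two required identities and completes the argument.

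I expect the main technical hurdle to be the field-of-values bookkeeping in the second paragraph: one must check that $\psi(a_{dw})$, being simultaneously a value of a $\mathbb{K}_0$-representation and integral over $\mathbb{A}_0$, actually lies in $\Q[\mathbf{u}^{\pm1}]$ (resp.\ $\Q[\sqrt{\mathbf{u}}^{\,\pm1}]$) — using that these Laurent polynomial rings over $\overline{\Q}$ are normal — so that $f$ may be applied and the result lies where claimed. This bookkeeping can also be sidestepped by invoking the Corollary following \prettyref{lem:K0splittingfieldH}, which already records that some irreducible character of $\mathcal{H}^f$ lying over $\psi_0^f$ has values in $\Q(\sqrt q)$ (resp.\ $\Q$), and similarly on the $g$-side; one then only needs that any other irreducible character over $\psi_0^f$, resp.\ $\psi_0^g$, differs from it by one of the linear characters $\beta$ — precisely the Clifford structure used above. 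The remaining points (that induction of generic characters commutes with specialization, and that the $C(\la)$-part $w_c$ is the only $C(\la)$-coordinate entering $(\beta\psi)(a_{dwd^{-1}})$, because $R(\la)\lhd W(\la)$) are immediate from \eqref{eq:inducedHecke}.
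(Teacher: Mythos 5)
Your argument is correct and is essentially the paper's proof: both pivot on \prettyref{lem:K0splittingfieldH} to get a $\sigma$-stable extension $\psi$ of $\psi_0$, deduce that the specialized generic character $\tau^f,\tau^g$ has $\sigma$-stable values, and then track only the Clifford/Gallagher parameter $\beta\in\irr(C(\la)_{\psi_0})$ under $\sigma$. The paper packages this by noting $\eta=(\beta\tau)^g=\beta\tau^g$ (via Gallagher and the projection formula) and conjugating $\beta$; you instead introduce $\tau_\sigma:=\Ind(\beta^\sigma\psi)$ and verify both specializations via \eqref{eq:inducedHecke} — a minor reorganization of the same computation.
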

\begin{proof}
Let $\eta\in\irr(W(\la))$ lie over $\eta_0\in\irr(R(\la))$. Note that both $\eta^\sigma$ and $\eta^{(\sigma)}$ also lie over $\eta_0$, since the characters of the Weyl group $R(\la)$ are all rational-valued.

 Write $\eta_0=\psi_0^g$ for $\psi_0$ a character of $\mathcal{H}_0$, with the notation as above, and let $\psi$ be the character of $C(\la)_{\psi_0}\mathcal{H}_0$ with values in $\mathbb{K}_0$ ensured by \prettyref{lem:K0splittingfieldH}.  Let $\tau$ be the character induced from $\psi$ to $\mathcal{H}$, so that $\tau$ also has values in $\mathbb{K}_0$ and hence $\tau^g$ and $\tau^f$ are fixed by $\sigma$.  Note that, using Gallagher's theorem and \cite[Problem (5.3)]{isaacs}, together with the fact that that $C(\la)$ is either cyclic or Klein-four, we have $\eta=(\beta \tau)^g=\beta \tau^g$ for some $\beta\in\irr(C(\la))$, since $g$ does not affect the values of $\beta$.  Then $\eta^\sigma=\beta^\sigma \tau^g=(\beta^\sigma \tau)^g$.  But also, $(\beta \tau)^f=\beta \tau^f$, and hence $\mathfrak{f}(\beta \tau^f)=\eta$, and we similarly have $(\beta \tau^f)^\sigma=\beta^\sigma \tau^f=(\beta^\sigma \tau)^f$.  Hence $\mathfrak{f}((\beta \tau^f)^\sigma)=\eta^\sigma$, so  $\eta^{(\sigma)}=\eta^\sigma$ by definition.
\end{proof}

%

\subsection{The Case $\ell=2$}

We next consider the situation of $\ell=2$.  
In the following observation, for $R\lhd H$ we denote by $\irr_{2',H}(R)\subseteq \irr_{2'}(R)$ the set of $\theta\in\irr(R)$ such that $\theta\in\irr(R|\eta)$ for some $\eta\in\irr_{2'}(H)$.

\begin{lemma}\label{lem:semidirelemab}
Let $H$ be a finite group such that $H=R\rtimes C$ with $C$ an elementary abelian $2$-group.  

\begin{enumerate}[label=(\alph*)] 
\item Let $\sigma\in\gal$ and let $\eta\in\irr_{2'}(H|\theta)$, where $\theta\in\irr(R)$ is fixed by $\sigma$.  Then $\eta^\sigma=\eta$.
\item If $C=C_2$ is cyclic, then there is a $\gal$-equivariant map $\Xi\colon \irr_{2',H}(R)\rightarrow\irr_{2'}(H)$ such that $\Xi(\theta)$ is an extension of $\theta$ to $H$ for each $\theta\in\irr_{2',H}(R)$. 
\end{enumerate}
\end{lemma}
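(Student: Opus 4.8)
The plan is to reduce both parts to a single structural observation: under the hypotheses every $\eta\in\irr_{2'}(H|\theta)$ is in fact an \emph{extension} of $\theta$, and because $C$ has exponent $2$ such an $\eta$ takes a nonzero rational integer value at every element of $C$. To establish the reduction, let $\eta\in\irr_{2'}(H|\theta)$. The number $|H:H_\theta|$ of $H$-conjugates of $\theta$ divides $\eta(1)/\theta(1)$, which is odd, and also divides $|H:R|=|C|$, a power of $2$; hence $|H:H_\theta|=1$ and $\theta$ is $H$-invariant. Then $\eta\downarrow R=e\theta$ with $e=\eta(1)/\theta(1)$, and a Frobenius-reciprocity count identifies $e^2$ (the multiplicity of $1_R$ in $\eta\overline\eta\downarrow R$) with $|\{\beta\in\irr(H/R):\beta\eta=\eta\}|$, which is the order of a subgroup of $\irr(H/R)$ and so divides $|H/R|=|C|$; since $e$ is odd, $e=1$. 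Thus $\eta$ extends $\theta$ and $\theta(1)=\eta(1)$ is odd, so by Gallagher's theorem \cite[Corollary 6.17]{isaacs} the set $\irr(H|\theta)$ is exactly $\{\beta\eta:\beta\in\irr(H/R)\cong\irr(C)\}$; as $C$ is elementary abelian $2$, each such $\beta$ has order dividing $2$ and is rational-valued, so in particular $\irr_{2'}(H|\theta)=\irr(H|\theta)$.

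The second ingredient is the elementary fact that, for any $c\in C$, since $c^2=1$ a representing matrix of $\eta$ at $c$ is an involution, whence $\eta(c)\in\Z$ with $\eta(c)\equiv\eta(1)=\theta(1)\equiv 1\pmod 2$; in particular $\eta(c)\neq 0$, and $\sigma$ fixes $\eta(c)$ for every $\sigma\in\gal$. For part (a), suppose $\theta^\sigma=\theta$. Then $\eta^\sigma\downarrow R=(\eta\downarrow R)^\sigma=\theta$, so $\eta^\sigma=\beta\eta$ for some $\beta\in\irr(H/R)$ by the reduction. Evaluating at any $c\in C$ gives $\eta(c)=\eta(c)^\sigma=\eta^\sigma(c)=\beta(c)\eta(c)$, and $\eta(c)\neq 0$ forces $\beta(c)=1$; since $c$ was arbitrary, $\beta$ is trivial and $\eta^\sigma=\eta$.

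For part (b), write $C=\langle c\rangle$ with $c^2=1$. Given $\theta\in\irr_{2',H}(R)$, the reduction shows the two extensions of $\theta$ to $H$ are $\eta$ and $(-1_{H/R})\eta$, with values at $c$ equal to the nonzero integers $\eta(c)$ and $-\eta(c)$; both have odd degree $\theta(1)$, hence lie in $\irr_{2'}(H)$. Define $\Xi(\theta)$ to be the unique extension of $\theta$ to $H$ with $\Xi(\theta)(c)>0$. For $\sigma\in\gal$, the character $\Xi(\theta)^\sigma$ is an extension of $\theta^\sigma$ (which again lies in $\irr_{2',H}(R)$, being covered by $\eta^\sigma\in\irr_{2'}(H)$), and $\Xi(\theta)^\sigma(c)=\Xi(\theta)(c)^\sigma=\Xi(\theta)(c)>0$, so $\Xi(\theta)^\sigma=\Xi(\theta^\sigma)$; this is the required $\gal$-equivariance, and $\Xi(\theta)$ extends $\theta$ by construction. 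The only genuine step is the structural reduction showing $\eta\downarrow R$ is irreducible (so that the competing characters differ merely by a rational linear character of $H/R$); once that is in hand, both statements are forced by the triviality that an involution has integer — hence $\gal$-fixed and, here, nonzero — trace.
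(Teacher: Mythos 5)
Your proof is correct, and it takes a genuinely different route from the paper's. The paper establishes both parts via the determinant: for (a), it observes that $\det\theta$ is a $\sigma$-fixed linear character, extends it to a $\sigma$-fixed linear $\mu$ on $H$ (using the easy linear case), invokes the uniqueness in \cite[Lemma 6.24]{isaacs} of the extension $\eta'$ of $\theta$ with $\det\eta'=\mu$ to conclude $\eta'$ is $\sigma$-fixed, and then writes $\eta=\beta\eta'$ for a rational $\beta\in\irr(C)$; for (b), it fixes a choice $\Xi_0$ on linear characters by prescribing $\Xi_0(\theta)(c)=1$ and then defines $\Xi(\theta)$ to be the unique extension with $\det\Xi(\theta)=\Xi_0(\det\theta)$. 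You instead exploit that every $c\in C$ is represented by an involutory matrix, so $\eta(c)$ is a rational integer with $\eta(c)\equiv\eta(1)\pmod 2$, hence nonzero and $\sigma$-fixed; part (a) follows at once by evaluating $\eta^\sigma=\beta\eta$ at each $c$, and part (b) by selecting the extension with $\Xi(\theta)(c)>0$, an intrinsically canonical and $\sigma$-stable choice. Both are sound: yours avoids the determinant-lift uniqueness argument, at the cost of needing $\eta(c)\neq 0$, which is exactly what the odd-degree hypothesis guarantees. Your opening reduction that $\eta\downarrow R$ is irreducible is the same Clifford-theoretic point the paper dispatches in one line (odd degree together with a $2$-power index forces inertial index and ramification $e$ both equal to $1$); the $e^2$-count via Frobenius reciprocity is a valid way to spell it out.
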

\begin{proof}
First note that for $\eta\in\irr_{2'}(H)$, it follows from Clifford theory that $\eta$ restricts irreducibly to some $\theta\in\irr_{2'}(R)$ since $H/R$ is a $2$-group.

Now consider part (a).  Note that the statement is true for linear characters, since if $\eta$ is linear, we have $\eta^\sigma(rc)=\eta(r)^\sigma\eta(c)^\sigma=\theta(r)^\sigma\eta(c)^\sigma=\theta(r)\eta(c)=\eta(rc)$ for any $r\in R$ and $c\in C$, since $c^2=1$ and $\theta^\sigma=\theta$ by assumption.

Now, the character $\det\theta$ (obtained by composing the determinant with a representation affording $\theta$) is a linear character fixed by $\sigma$. Hence any extension $\mu$ of $\det\theta$ to $H$ is fixed by $\sigma$ by the previous paragraph.  But \cite[Lemma 6.24]{isaacs} implies that there is a unique extension $\eta'\in\irr(H)$ of $\theta$ such that $\det\eta'=\mu$, and hence $\eta'$ is fixed by $\sigma$ by uniqueness.  Then since $\eta=\eta'\beta$ for some $\beta\in\irr(C)$ by Gallagher's theorem, we see $\eta^\sigma=\eta$ as well, since $\beta$ is a character of an elementary abelian $2$-group. This completes part (a).

We now consider part (b). Note that again the statement holds for linear characters, since the extension is determined by its value on a generator $c$ of $C$. In particular, we may define $\Xi_0\colon \irr_{2',H}(R/R')\rightarrow \irr_{2'}(H/R')$ to be an extension map such that $\Xi_0(\theta)$ is an extension to $H$ satisfying $\Xi_0(\theta)(c)=1$ for each linear $\theta\in\irr_{2', H}(R)$. Then $\Xi_0(\theta)^\sigma=\Xi_0(\theta^\sigma)$ for each $\sigma\in\gal$ and each linear $\theta\in\irr_{2', H}(R)$.

Now, for general $\theta\in\irr_{2', H}(R)$, we may define $\Xi(\theta)$ to be the unique extension to $H$ such that $\det(\Xi(\theta))=\Xi_0(\det\theta)$, guaranteed by \cite[Lemma 6.24]{isaacs}.  Then since $\det(\Xi(\theta^\sigma))=\Xi_0(\det\theta^\sigma)=\Xi_0(\det\theta)^\sigma=\det(\Xi(\theta))^\sigma=\det(\Xi(\theta)^\sigma)$, this forces $\Xi(\theta)^\sigma=\Xi(\theta^\sigma)$ for each $\sigma\in\gal$ and $\theta\in\irr_{2', H}(R)$.
\end{proof}

This yields the following extensions of \cite[Proposition 4.13 and Corollary 4.16]{SFgaloisHC}.

\begin{corollary}\label{cor:newsigmaaction}
Let ${G}$ be a group of Lie type with odd defining characteristic and let $\lambda\in\irr(T)$ such that $C(\lambda)$ is a (possibly trivial) elementary abelian $2$-group. 
Let $\sigma\in\gal$ and $\eta\in\irr_{2'}(W(\lambda))$.  
Then $\eta^{(\sigma)}=\eta=\eta^\sigma$.
\end{corollary}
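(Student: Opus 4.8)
\emph{The plan.}
I would prove the two equalities separately, exploiting the semidirect decomposition $W(\la)=R(\la)\rtimes C(\la)$ recalled in \prettyref{sec:HCseries}, together with \prettyref{lem:semidirelemab} and \prettyref{prop:etasigma}.

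\emph{The equality $\eta^\sigma=\eta$.}
Since $[W(\la):R(\la)]=|C(\la)|$ is a power of $2$ and $\eta$ has odd degree, Clifford theory forces $\eta$ to restrict irreducibly to a character $\theta\in\irr_{2'}(R(\la))$, exactly as noted at the start of the proof of \prettyref{lem:semidirelemab}. Because $R(\la)$ is a Weyl group, $\theta$ is rational-valued, so $\theta^\sigma=\theta$. Applying \prettyref{lem:semidirelemab}(a) to $H=W(\la)=R(\la)\rtimes C(\la)$ with $C=C(\la)$ the elementary abelian $2$-group then yields $\eta^\sigma=\eta$.

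\emph{The equality $\eta^{(\sigma)}=\eta^\sigma$.}
By \prettyref{prop:etasigma} this is immediate unless $R(\la)$ has a component of type $\type{G}_2$, $\type{E}_7$, or $\type{E}_8$, so only that case needs work. There I would first observe, using \prettyref{lem:W(s)} and an inspection of the centralizers of semisimple elements in $\bG^\ast$ (equivalently, of the possible root subsystems), that such a component forces $C(\la)=1$, so $W(\la)=R(\la)$ and $\eta=\theta$. Since $\theta$ has odd degree, its restriction to each component of type $\type{G}_2$, $\type{E}_7$, or $\type{E}_8$ is again of odd degree, hence is not one of the characters of the corresponding generic Iwahori--Hecke algebra whose realization over a splitting field requires a square root of the indeterminates---those characters have even degree ($2$, $512$, and $4096$ respectively; cf. \prettyref{lem:K0splittingfieldH0} and \cite{GeckPfeiffer}). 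Therefore the irreducible character $\phi$ of the generic algebra $\mathcal{H}$ with $\phi^g=\eta$ has values in $\Q(\mathbf{u})$, so $\phi^f$ takes rational values (the values of $f$ being powers of $q$), is fixed by $\sigma$, and satisfies $\mathfrak{f}(\phi^f)=\eta$; by the definition of $\eta^{(\sigma)}$ this gives $\eta^{(\sigma)}=\mathfrak{f}\bigl((\phi^f)^\sigma\bigr)=\mathfrak{f}(\phi^f)=\eta=\eta^\sigma$.

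\emph{Main obstacle.}
The exceptional case above is the only real difficulty: one must carefully confirm that a $\type{G}_2$/$\type{E}_7$/$\type{E}_8$ component of $R(\la)$ genuinely forces $C(\la)=1$, and cite precisely which Hecke-algebra characters require $\sqrt{\mathbf{u}}$, verifying they are all of even degree so that an odd-degree $\eta$ avoids them. An alternative that bypasses this Hecke-algebra bookkeeping is to note that in each group arising here ($\bG$ of type $\type{G}_2$, $\type{E}_7$, or $\type{E}_8$ with $s$ central, so that $\la$ is rational and $\gamma_{\la,\sigma}=\delta'_{\la,\sigma}=1$) one can apply \prettyref{thm:GaloisAct} and deduce $\eta^{(\sigma)}=\eta$ directly from the known rationality of the odd-degree characters of $G$ in the relevant principal series.
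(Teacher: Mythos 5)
Your proof is essentially correct and rests on the same key observation as the paper's: the characters of the generic Iwahori--Hecke algebra $\mathcal{H}_0$ that genuinely require $\sqrt{\mathbf{u}}$ (the exceptions to $\mathbb{K}_0 = \Q(\mathbf{u})$ in types $\type{G}_2$, $\type{E}_7$, $\type{E}_8$ from \prettyref{lem:K0splittingfieldH0}) all have even degree, so an odd-degree $\eta$ restricts to an $\eta_0 \in \irr_{2'}(R(\la))$ that avoids them, and the hypothesis on $\sigma$ fixing $\Q(\sqrt{q})$ in \prettyref{prop:etasigma} can simply be dropped. The first half (that $\eta^\sigma = \eta$) is verbatim the paper's argument via \prettyref{lem:semidirelemab}.

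The one place where you diverge is the detour through $C(\la) = 1$. The paper does not establish or use this. It is an unnecessary complication: once you know $\eta_0$ has odd degree, the even-degree observation already gives you a $\Q(\mathbf{u})$-realization of $\psi_0$ regardless of whether $C(\la)$ is trivial, and the proof of \prettyref{prop:etasigma} goes through unchanged (the extension $\psi$ and the induced character $\tau$ then take values in $\Q(\mathbf{u})$, so $\tau^f$ has rational values). Moreover, verifying $C(\la)=1$ via \prettyref{lem:W(s)} invokes a hypothesis (simply connected, not type $\type{A}_n$) that the corollary as stated does not impose, so this route is both harder and less robust. Your alternative route at the end (exploit $\la^\sigma = \la$, $\gamma_{\la,\sigma} = \delta'_{\la,\sigma} = 1$, and independently known rationality of the relevant principal-series characters, then read off $\eta^{(\sigma)} = \eta$ from \prettyref{thm:GaloisAct}) is sound as a sanity check in the cases where those components actually occur, but it is a consequence rather than a proof of the intended general statement and is somewhat circular relative to the later applications. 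The direct even-degree argument, which you correctly identify, is the right one.
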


\begin{proof}
Since $R(\la)$ is a Weyl group, every $\theta\in\irr(R(\la))$ is rational-valued, so the statement $\eta=\eta^\sigma$ follows directly from \prettyref{lem:semidirelemab}, since $W(\la)=R(\la)\rtimes C(\la)$.
If $R(\la)$ contains no component $\type{G}_2, \type{E}_7,$ or $\type{E}_8$, \prettyref{prop:etasigma} yields $\eta^\sigma=\eta^{(\sigma)}$.  However, note that the exceptions to $\mathbb{K}_0=\mathbb{Q}(\mathbf{u})$ in  \prettyref{lem:K0splittingfieldH0} in case $\type{G}_2$, $\type{E}_7$, and $\type{E}_8$ have even degree, using \cite[Theorem 8.3.1, Examples 9.3.2 and 9.3.4]{GeckPfeiffer}, and hence we may remove the assumption that $\sigma$ fixes $\Q(\sqrt{q})$ in this case in \prettyref{prop:etasigma}.  
\end{proof}


\section{The Bijection for the Principal Series}\label{sec:principalseries}

In this section, we prove part \ref{mainodd} of \prettyref{thm:main}, with the 
exception of $\Sp_{2n}(q)$ when $\ell= 2$ and $q\equiv 5\pmod 8$.  This exception will be completed in \prettyref{sec:typeC}.

As before, let $\bG$ be simple of simply connected type and let $F\colon \bG\rightarrow \bG$ be a Frobenius morphism defining $G=\bG^F$ over $\F_q$, and $G\lhd \wt{G}$ as a result of a regular embedding.

We keep the notation from \prettyref{sec:prelim}. In particular, we let $T=\bg{T}^F$ be a maximally split torus of $G$ and let $N=\norm_{\bg{G}}(\bg{T})^F=\norm_{{G}}(\bg{S}_1)$ and $\wt{N}=\norm_{\wt{{G}}}(\bg{S}_1)$ where $\bg{S}_1$ is a torus of $\bg{G}$ such that $\cen_{G}(\bg{S}_1)=T$.  Further, $D\leqslant \aut(G)$ denotes the group of automorphisms generated by graph and field automorphisms.  For $\la\in\irr(T)$, recall that $\mathcal{E}(G, T,\la)$ denotes the set of irreducible constituents of $\HC_T^G(\la)$ and that $\Lambda$ is the $ND$-equivariant extension map with respect to $T\lhd N$ discussed in \prettyref{sec:extnmap}. 

According to \cite[Theorem 5.2]{MalleSpathMcKay2}, the map 
\[\Omega\colon\bigcup_{\la\in\irr(T)} \mathcal{E}(G, T,\la)\rightarrow \irr(N) \] given by 
\begin{equation}\label{eq:bij}
\HC_T^G(\la)_\eta \mapsto \Ind_{N_{\la}}^N(\Lambda(\la)\eta)
\end{equation} defines an $\wt{N}D$-equivariant bijection (see also \cite[Theorem 4.5]{CS13} and its proof when $\zen(G)=1$).  

 Using the results of the previous sections, we may show that $\Omega$ is equivariant with respect to the Galois automorphisms in $\galh_\ell$ when $\ell$ is an odd prime dividing $q-1$.
 
 \begin{proposition}\label{prop:equivariancemaxsplit}
 Let $\Omega$ be as in \eqref{eq:bij} and assume $G=\bG^F$ is as above, not of type $\type{A}$.  Let $\ell$ be an odd prime dividing $(q-1)$.  Then $\Omega(\chi^\sigma)=\Omega(\chi)^\sigma$ for any $\chi\in \bigcup_{\la\in\irr(T)} \mathcal{E}(G, T,\la)$ and $\sigma\in \galh_\ell$.  
 \end{proposition}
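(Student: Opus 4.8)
The plan is to compare the two characters directly, by feeding \prettyref{thm:GaloisAct} into the formula \eqref{eq:bij} for $\Omega$ and then using the results of Sections~\ref{sec:gamma}--\ref{sec:genericalg} to see that, once $\ell$ is odd with $\ell\mid q-1$, all the ``error terms'' either vanish or cancel. First I would write $\chi=\HC_T^G(\la)_\eta$ for suitable $\la\in\irr(T)$ and $\eta\in\irr(W(\la))$ and apply \prettyref{thm:GaloisAct}, obtaining $\chi^\sigma=\HC_T^G(\la^\sigma)_{\eta'}$ with $\eta'(w)=\gamma_{\la,\sigma}(w)\,\delta'_{\la,\sigma}(w^{-1})\,\eta^{(\sigma)}(w)$. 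Since $N_\la=N_{\la^\sigma}$, applying $\Omega$ gives $\Omega(\chi^\sigma)=\Ind_{N_\la}^N(\Lambda(\la^\sigma)\eta')$; on the other hand, since the $\gal$-action commutes with induction and with products of characters, and $\Lambda(\la)^\sigma=\delta_{\la,\sigma}\Lambda(\la^\sigma)$, we get $\Omega(\chi)^\sigma=\Ind_{N_\la}^N\bigl(\Lambda(\la)^\sigma\eta^\sigma\bigr)=\Ind_{N_\la}^N\bigl(\delta_{\la,\sigma}\Lambda(\la^\sigma)\eta^\sigma\bigr)$. So it suffices to establish the identity $\eta'=\delta_{\la,\sigma}\,\eta^\sigma$ in $\irr(W(\la))$, after which the $\delta_{\la,\sigma}$-twist coming from $\Lambda(\la)^\sigma$ cancels against the one inside $\eta'$ and we are done.

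Next I would simplify $\eta'$ using the standing hypothesis. By \prettyref{cor:maxsplitsqrt} the character $\gamma_{\la,\sigma}$ is trivial, and moreover $\Q(\sqrt q)$ lies in the fixed field of $\galh_\ell$, so $\sigma$ fixes $\Q(\sqrt q)$; hence \prettyref{prop:etasigma} applies (its $\type{G}_2,\type{E}_7,\type{E}_8$ caveat being exactly the condition just verified) and gives $\eta^{(\sigma)}=\eta^\sigma$. By \prettyref{lem:extRla} we have $R(\la)\leqslant\ker\delta_{\la,\sigma}$, so $\delta_{\la,\sigma}$ factors through $W(\la)/R(\la)\cong C(\la)$ and therefore equals $\delta'_{\la,\sigma}$; and by \prettyref{cor:deltaorder2}, $\delta_{\la,\sigma}^2=1$, whence $\delta_{\la,\sigma}(w^{-1})=\delta_{\la,\sigma}(w)$ for all $w$. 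Substituting these into the formula for $\eta'$ yields $\eta'(w)=\delta_{\la,\sigma}(w)\,\eta^\sigma(w)$, i.e.\ $\eta'=\delta_{\la,\sigma}\,\eta^\sigma$, which is precisely what the first paragraph reduced the statement to. I would also record along the way that the hypothesis that $\bG$ is not of type $\type{A}$ is used here, through \prettyref{lem:W(s)} and \prettyref{lem:extRla}.

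\textbf{Main difficulty.}
Given that Sections~\ref{sec:gamma}--\ref{sec:genericalg} have been set up, the proof is really an assembly rather than a new argument, and the genuinely hard work lies in those earlier results — above all \prettyref{prop:etasigma}, which controls the passage between $\eta^{(\sigma)}$ and $\eta^\sigma$ via the generic Hecke algebra. The one point requiring care in the present proof is that $\delta_{\la,\sigma}$ need \emph{not} be trivial in general (it is only known to vanish on the odd-degree part, by \prettyref{prop:equivextB}), so one cannot simply discard it; the argument must use that the twist appearing in $\eta'$ through $\delta'_{\la,\sigma}$ is exactly the twist produced by $\Lambda(\la)^\sigma=\delta_{\la,\sigma}\Lambda(\la^\sigma)$, so that the two cancel after multiplying by $\Lambda(\la^\sigma)$ and inducing to $N$. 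This in turn rests on the two identifications $\delta'_{\la,\sigma}=\delta_{\la,\sigma}$ and $\delta_{\la,\sigma}(w^{-1})=\delta_{\la,\sigma}(w)$ noted above, together with the fact that $N_\la=N_{\la^\sigma}$ and that $\Lambda(\la^\sigma)$ and $\Lambda(\la)^\sigma$ are both extensions of $\la^\sigma$ to $N_\la$, so that every character being compared genuinely lives on the same group.
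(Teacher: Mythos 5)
Your proposal is correct and follows essentially the same route as the paper's own proof: both apply \prettyref{thm:GaloisAct} inside the definition \eqref{eq:bij}, kill $\gamma_{\la,\sigma}$ via \prettyref{cor:maxsplitsqrt}, identify $\eta^{(\sigma)}=\eta^\sigma$ via \prettyref{prop:etasigma} (with the $\Q(\sqrt q)$ hypothesis satisfied precisely because of \prettyref{cor:maxsplitsqrt}), and cancel the $\delta_{\la,\sigma}$-twists using \prettyref{lem:extRla} and \prettyref{cor:deltaorder2}. The only difference is cosmetic: you phrase the cancellation as the single identity $\eta'=\delta_{\la,\sigma}\eta^\sigma$ while the paper compares the two induced characters directly, and you are slightly more explicit about where the hypotheses enter.
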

 \begin{proof}
 
Let $\chi=\HC_T^G(\la)_\eta$.  Then by \prettyref{thm:GaloisAct} and the definition \eqref{eq:bij} of $\Omega$, we have 
\[\Omega(\chi^\sigma)=\Omega(\HC_T^G(\la^\sigma)_{\gamma_{\la,\sigma}(\delta'_{\la,\sigma})^{-1}\eta^{(\sigma)}})=\Ind_{N_\la}^N\left(\Lambda(\la^\sigma)\gamma_{\la,\sigma}(\delta'_{\la,\sigma})^{-1}\eta^{(\sigma)}\right).\]
  But $\eta^{(\sigma)}=\eta^\sigma$ and  $\gamma_{\la,\sigma}=1$, by \prettyref{prop:etasigma} and \prettyref{cor:maxsplitsqrt}.  Further, $\delta_{\la,\sigma}=\delta_{\la,\sigma}'$ by \prettyref{lem:extRla}. Hence, we have 
  \[\Omega(\chi^\sigma)=\Ind_{N_\la}^N\left(\Lambda(\la^\sigma)\delta_{\la,\sigma}^{-1}\eta^{\sigma}\right).\]

On the other hand, 
\[\Omega(\chi)^\sigma=\left(\Ind_{N_{\la}}^N(\Lambda(\la)\eta)\right)^\sigma=\Ind_{N_{\la}}^N(\Lambda(\la)^\sigma\eta^\sigma)=\Ind_{N_{\la}}^N(\Lambda(\la^\sigma)\delta_{\la,\sigma}\eta^\sigma).\]  The result now follows, since $\delta_{\la,\sigma}=\delta_{\la,\sigma}^{-1}$ by \prettyref{cor:deltaorder2}.
 \end{proof}

 By \cite[Theorems 5.14 and 5.19]{malleheightzero}, when $d_\ell(q)=1$, we have $N$ contains $\norm_G(Q)$ for some Sylow $\ell$-subgroup $Q$ (and hence $N$ is also $\aut(G)_Q$-stable by  \cite[Proposition 2.5]{CS13}), 
with the following exceptions: $\ell=3$ and $G=\SL_3(q)$ with $q\equiv  4, 7\pmod 9$, $\ell=3$ and $G=G_2(q)$ with $q\equiv 4,7\pmod 9$, or $\ell=2$ and $G=\Sp_{2n}(q)$ with $q\equiv 5\pmod 8$.  
  We now complete the proof of \prettyref{thm:main}\ref{maind=1}, with these exceptions.

\begin{theorem}\label{thm:mainsplitinline}
Let $q$ be a power of a prime $p$ and let $\bG$ be simple of simply connected type, not of type $\type{A}$.  Let $F\colon \bG\rightarrow\bG$ be a Frobenius morphism such that $G:=\bG^F$ is defined over $\F_q$ and let $\ell$ be a prime such that $d_\ell(q)=1$.  Further assume that $q\equiv 1 \pmod 9$ if $\ell=3$ and $\bG$ is type $\type{G}_2$, and $q\equiv 1\pmod 8$ if $\ell=2$ and $\bG$ is of type $\type{C}$.    Then the map defined by \prettyref{eq:bij} induces an $\wt{N}D\times \galh_\ell$-equivariant bijection 
\[\Omega_\ell\colon \irr_{\ell'}(G)\rightarrow\irr_{\ell'}(N).\]
In particular, \prettyref{cond:condition} holds, taking $M=N$.  
\end{theorem}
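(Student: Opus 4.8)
The strategy is to show that the map $\Omega$ of \eqref{eq:bij} restricts to the asserted bijection, and then to read off \prettyref{cond:condition}. Recall that $\Omega\colon\bigcup_{\la\in\irr(T)}\mathcal{E}(G,T,\la)\to\irr(N)$ is already known to be an $\wt{N}D$-equivariant bijection by \cite[Theorem 5.2]{MalleSpathMcKay2}. First I would match the two $\ell'$-sets. Using Clifford theory and Gallagher's theorem together with $N/T=W$ and $N_\la/T=W(\la)$, one sees that $\irr_{\ell'}(N)=\{\Ind_{N_\la}^N(\Lambda(\la)\eta)\mid\ell\nmid[W:W(\la)],\ \eta\in\irr_{\ell'}(W(\la))\}$, so the $\Omega$-preimage of $\irr_{\ell'}(N)$ is exactly $\{\HC_T^G(\la)_\eta\mid\ell\nmid[W:W(\la)],\ \eta\in\irr_{\ell'}(W(\la))\}$. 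I would then argue this preimage equals $\irr_{\ell'}(G)$: that each such $\HC_T^G(\la)_\eta$ has $\ell'$-degree is the usual generic-degree computation, which works because $d_\ell(q)=1$ forces the factor $q-1$ of $|G|$ to be the only source of $\ell$-power beyond $|W|_\ell$; conversely, that every $\chi\in\irr_{\ell'}(G)$ lies in the principal series and has this shape follows for $\ell=2$ from \prettyref{thm:MS7.7} and \prettyref{lem:MS7.9} (the exceptional $\Sp_{2n}$-case there being excluded since it requires $q\equiv3\pmod4$, impossible when $d_2(q)=1$), and for odd $\ell\mid q-1$ from the parametrization of $\irr_{\ell'}(G)$ via the relative Weyl group of a Sylow $1$-torus in \cite{malleheightzero}. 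Finally, since $\zen(G)\leqslant T$, both $\HC_T^G(\la)_\eta$ and $\Ind_{N_\la}^N(\Lambda(\la)\eta)$ restrict to a multiple of $\la\downarrow_{\zen(G)}$ on $\zen(G)$, so corresponding characters lie over the same central character.

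Next I would check $\galh_\ell$-equivariance of the restricted bijection $\Omega_\ell$. For odd $\ell$ nothing is left to do: \prettyref{prop:equivariancemaxsplit} already gives $\galh_\ell$-equivariance of $\Omega$ on its whole domain, hence on $\irr_{\ell'}(G)$. For $\ell=2$ one repeats that computation, now only for $\chi=\HC_T^G(\la)_\eta\in\irr_{2'}(G)$: by \prettyref{thm:GaloisAct} one has $\chi^\sigma=\HC_T^G(\la^\sigma)_{\eta'}$ with $\eta'(w)=\gamma_{\la,\sigma}(w)\,\delta'_{\la,\sigma}(w^{-1})\,\eta^{(\sigma)}(w)$, while $\Omega_2(\chi)^\sigma=\Ind_{N_\la}^N(\Lambda(\la)^\sigma\eta^\sigma)=\Ind_{N_\la}^N(\delta_{\la,\sigma}\Lambda(\la^\sigma)\eta^\sigma)$, so it suffices to see $\eta'=\delta_{\la,\sigma}\eta^\sigma$ in $\irr(W(\la))$. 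Here $\delta'_{\la,\sigma}=\delta_{\la,\sigma}$ and $\delta_{\la,\sigma}^2=1$ by \prettyref{lem:extRla} and \prettyref{cor:deltaorder2}; the factor $\gamma_{\la,\sigma}$ is trivial by \prettyref{prop:Clambdaeven} when $\bG$ is not of type $\type{C}$, and when $\bG$ is of type $\type{C}$ with $q\equiv1\pmod8$ it is still trivial by \prettyref{lem:SFT20Lemma3.11}, since $q\equiv1\pmod8$ forces either $q$ to be a square or $p\equiv1\pmod8$, and in the latter case every $\sigma\in\galh_2$ fixes $\sqrt p$ (realized as a quadratic Gauss sum in $\Q(\zeta_p)$, on which $\sigma$ acts trivially because $2$ is a square modulo $p$); and $\eta^{(\sigma)}=\eta=\eta^\sigma$ by \prettyref{cor:newsigmaaction} whenever $C(\la)$ is an elementary abelian $2$-group (every case except possibly $\bG$ of type $\type{D}_n$ with $n$ odd, or of type $\type{E}_6$, with $F$ untwisted, where $C(\la)$ may be cyclic of order $4$ or $3$), while $\eta^{(\sigma)}=\eta^\sigma$ in those remaining cases by \prettyref{prop:etasigma}, since a root subsystem of type $\type{D}_n$ or $\type{E}_6$ contains no component of type $\type{G}_2$, $\type{E}_7$, or $\type{E}_8$, so no hypothesis on $\sigma$ is needed there. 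Substituting gives $\eta'(w)=\delta_{\la,\sigma}(w^{-1})\eta^\sigma(w)=\delta_{\la,\sigma}(w)\eta^\sigma(w)$, as required; together with the $\wt{N}D$-equivariance of $\Omega$ this yields the $\wt{N}D\times\galh_\ell$-equivariance of $\Omega_\ell$.

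Finally, \prettyref{cond:condition} drops out: under the stated hypotheses, which are precisely those avoiding the listed exceptional pairs $(\ell,G)$, \cite[Theorems 5.14 and 5.19]{malleheightzero} give $\norm_G(Q)\leqslant N$ for a Sylow $\ell$-subgroup $Q$, and then \cite[Proposition 2.5]{CS13} shows $N$ is $\aut(G)_Q$-stable, while the $\wt{N}D$-equivariance of $\Omega_\ell$ translates into $\aut(G)_Q$-equivariance in the standard way; so $M=N$ together with $\Omega_\ell$ witnesses the condition. The step I expect to be the real obstacle is the $\ell=2$ equivariance, and within it the type $\type{C}$ case: \prettyref{prop:Clambdaeven} deliberately omits type $\type{C}$, so the triviality of $\gamma_{\la,\sigma}$ there has to be extracted by hand from the behaviour of $\galh_2$ on $\sqrt p$, which is exactly the reason for the hypothesis $q\equiv1\pmod8$; everything else is assembled from the structural results already in place.
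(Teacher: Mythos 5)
Your proof is correct and follows essentially the same route as the paper: restrict the bijection $\Omega$ of \eqref{eq:bij} to the $\ell'$-degree characters, dispose of odd $\ell$ via \prettyref{prop:equivariancemaxsplit}, and for $\ell=2$ run the \prettyref{thm:GaloisAct}-computation with $\gamma_{\la,\sigma}=1$, $\delta'_{\la,\sigma}=\delta_{\la,\sigma}=\delta_{\la,\sigma}^{-1}$, and $\eta^{(\sigma)}=\eta^\sigma$ from the same battery of lemmas (\prettyref{prop:Clambdaeven}, \prettyref{lem:extRla}, \prettyref{cor:deltaorder2}, \prettyref{prop:etasigma}, \prettyref{cor:newsigmaaction}). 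The two places you substitute your own argument for a citation — a Clifford-theory/generic-degree reduction in place of \cite[Propositions 7.3 and 7.8]{malleheightzero}, and a Gauss-sum argument via \prettyref{lem:SFT20Lemma3.11} in place of \cite[Lemma 4.11]{SFT20} to handle $\gamma_{\la,\sigma}$ for type $\type{C}$ with $q\equiv1\pmod 8$ — are both sound and do not alter the structure of the proof.
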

\begin{proof}

From \cite[Proposition 7.3]{malleheightzero}, every $\chi\in\irr_{\ell'}(G)$ lies in a Harish-Chandra series $\mathcal{E}(G, T,\la)$ for $T$ a maximally split torus of $G$ and by \cite[Proposition 7.8]{malleheightzero}, the bijection defined by \eqref{eq:bij} indeed yields a bijection \[\Omega_\ell\colon \irr_{\ell'}(G)\rightarrow\irr_{\ell'}(N)\]
such that corresponding characters lie over the same character of $\zen(G)$. 
 By \cite[Theorems 5.14 and 5.19]{malleheightzero} and \cite[Proposition 2.5]{CS13}, $N$ is $\aut(G)_Q$-stable and contains $\norm_G(Q)$ for some Sylow $\ell$-subgroup $Q$.  If $\ell$ is odd, the result now follows from \prettyref{prop:equivariancemaxsplit}.

Hence we assume $\ell=2$, $q\equiv 1\pmod 4$, and further $q\equiv 1\pmod 8$ if $G$ is $\Sp_{2n}(q)$.  Let $\sigma\in\galh_2$.  We must show that $\Omega(\chi)^\sigma=\Omega(\chi^\sigma)$ for each $\chi\in\irr_{2'}(G)$.  Note that \prettyref{lem:extRla} and \prettyref{cor:deltaorder2} imply that $\delta_{\la,\sigma}'=\delta_{\la,\sigma}=\delta^{-1}_{\la, \sigma}$.

 By \prettyref{lem:MS7.9}, we have $\chi=\HC_T^G(\la)_\eta$ for some $\la\in\irr(T)$ and $\eta\in\irr_{2'}(W(\la))$. Then we have $\gamma_{\la, \sigma}=1$, using \cite[Lemma 4.11]{SFT20} for $\Sp_{2n}(q)$ in the case being considered,  and  \prettyref{prop:Clambdaeven} otherwise.  
Further, we have $\eta^\sigma=\eta^{(\sigma)}$ by \prettyref{prop:etasigma} or \prettyref{cor:newsigmaaction}.
Then with this, the statement follows by the same calculation as the proof of \prettyref{prop:equivariancemaxsplit}.
\end{proof}




This proves \prettyref{thm:main}\ref{mainodd} with the exception of the case $G=\Sp_{2n}(q)$, $\ell=2$, and $q\equiv 5\pmod 8$, which we consider in the next section.

\section{$\Sp_{2n}(q)$ with $\ell=2$}\label{sec:typeC}

In this section, we prove \prettyref{thm:main} in the case $\ell=2$ and $G=\Sp_{2n}(q)$. Throughout this section, let $\galh:=\galh_2$ and let $G=\Sp_{2n}(q)$ with $q$ odd.  Since the case $q\equiv1\pmod8$ is completed in \prettyref{thm:mainsplitinline} above, we assume throughout that $q$ is an odd power of an odd prime.   In \cite[Section 4.4]{Malle08},  Malle shows that $G$ satisfies the inductive McKay conditions for $\ell=2$ in this case.  In particular, he constructs a proper $\aut(G)_Q$-stable subgroup $M\leqslant G$ containing $\norm_G(Q)$, where $Q$ is a Sylow $2$-subgroup of $G$, such that there is an $\aut(G)_Q$-equivariant bijection $\irr_{2'}(G)\leftrightarrow\irr_{2'}(M)$ satisfying that corresponding characters lie over the same character of $\zen(G)$, in addition to the stronger properties required in the inductive McKay conditions. We will show that this bijection can be chosen to further be equivariant with respect to $\galh$.  

The following, found in \cite[Corollary 14.3]{SFT20}, describes the character $\gamma_{\la,\sigma}\delta_{\la,\sigma}$ in the case of $\Sp_{2n}(q)$, and will be needed to show the $\galh$-equivariance of the bijection.

\begin{lemma}[Corollary 14.3 of \cite{SFT20}]\label{lem:rdelta2}
Let $G=\Sp_{2n}(q)$ with $q$ odd, and let $\la\in\irr(T)$ be a nontrivial character of a maximally split torus $T$ of $G$ such that $\la^2=1$.  Let $\ell=2$ and $\sigma\in\galh_2$.  Then $\gamma_{\la,\sigma}\delta_{\la,\sigma}\in\irr(C(\la))\cong\{\pm1\}$ satisfies:
\begin{itemize}
\item If $q\equiv \pm1\pmod 8$, then $\gamma_{\la,\sigma}\delta_{\la,\sigma}=1$.
\item If $q\equiv \pm 3\pmod 8$, then $\gamma_{\la,\sigma}\delta_{\la,\sigma}=(-1)^r$, where $r$ is the integer such that $\sigma$ sends each $\ell'$-root of unity to the $\ell^r$ power.
\end{itemize}
In particular, $\gamma_{\la,\sigma}\delta_{\la,\sigma}$  is nontrivial if and only if $\sqrt{\omega q}$ is moved by $\sigma$, where $\omega=(-1)^{(q-1)/2}$.

\end{lemma}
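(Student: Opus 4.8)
The plan is to reduce the whole computation to the action of $\sigma$ on a single quadratic Gauss sum. Since $\la\neq 1$ and $\la^2=1$, its dual semisimple element $s\in G^\ast=\mathrm{SO}_{2n+1}(q)$ is a nontrivial involution, so $\cen_{\bG^\ast}^\circ(s)$ has type $\type{B}_a\times\type{D}_b$ with $a+b=n$ and $b\geq 1$, and $\big(\cen_{\bG^\ast}(s)/\cen_{\bG^\ast}^\circ(s)\big)^{F^\ast}$ has order $2$; hence by \prettyref{lem:W(s)}, $W(\la)=R(\la)\rtimes C(\la)$ with $C(\la)=\langle c\rangle\cong\Z/2$. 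By \prettyref{lem:extRla} and \prettyref{lem:SFT20Lemma3.11}, both $\delta_{\la,\sigma}$ and $\gamma_{\la,\sigma}$ vanish on $R(\la)$ and so are inflated from $C(\la)$, which reduces everything to the sign $\gamma_{\la,\sigma}(c)\delta_{\la,\sigma}(c)$. After an $N$-conjugation moving the $(-1)$-block of $s$ to the last $b$ coordinates, I would take $c$ to be the reflection $s_{\alpha_n}$ attached to the long simple root $\alpha_n=2e_n$: it is an odd sign change in the $\type{D}_b$-factor, hence lies outside $R(\la)$ and generates $C(\la)$, it induces the nontrivial graph automorphism of that factor, and its length in $\bg{W}=W(\type{C}_n)$ equals $1$ — in particular odd.

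Next I would evaluate the two characters on $c$. Since $q$ is an odd power of $p$ it is not a square and $\Q(\sqrt q)=\Q(\sqrt p)$, so as $C(\la)$ contains the odd-length element $c$, \prettyref{lem:SFT20Lemma3.11} gives $\gamma_{\la,\sigma}(c)=\sigma(\sqrt q)/\sqrt q$. For $\delta_{\la,\sigma}$ I would imitate the Chevalley-relation computation of \prettyref{prop:equivextB}: a lift $\tilde c=n_{\alpha_n}(-1)\in N$ of $c$ satisfies $\tilde c^{\,2}=h_{\alpha_n}(-1)$, and by the choice of coordinates $\la(h_{\alpha_n}(-1))=(-1)^{(q-1)/2}=:\omega$, so $\Lambda_\la(\tilde c)^2=\Lambda_\la(\tilde c^{\,2})=\omega$; since $\la$ is $\pm1$-valued we have $\la^\sigma=\la$, and the explicit construction of $\Lambda$ (\prettyref{lem:exceptLambda}) forces $\delta_{\la,\sigma}(c)=\Lambda_\la(\tilde c)^\sigma/\Lambda_\la(\tilde c)=\sigma(\sqrt\omega)/\sqrt\omega$ for the chosen root $\sqrt\omega$. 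Thus $\delta_{\la,\sigma}=1$ if $q\equiv 1\pmod 4$, while $\sqrt\omega=\sqrt{-1}$ if $q\equiv 3\pmod 4$ — the feature distinguishing type $\type{C}$ from the type $\type{B}$ case of \prettyref{prop:equivextB}, where the analogous square came out $+1$.

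Multiplying, $\gamma_{\la,\sigma}(c)\,\delta_{\la,\sigma}(c)=\sigma(\sqrt{\omega q})/\sqrt{\omega q}$ with $\sqrt{\omega q}:=\sqrt\omega\,\sqrt q$; since $\omega q\equiv 1\pmod 4$, the ambiguity of $\sigma$ on $\sqrt{-1}$ — which is \emph{not} controlled by $\galh$ — cancels between the two factors. Now let $\chi$ be the quadratic character of $\F_q^\times$ and $\Gamma=\sum_{x\in\F_q^\times}\chi(x)\,\zeta_p^{\tr(x)}\in\Q(\zeta_p)$ the associated Gauss sum, so $\Gamma^2=\chi(-1)q=\omega q$ and $\sqrt{\omega q}=\pm\Gamma$. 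If $\sigma\in\galh$ corresponds to the integer $r$, then since $\zeta_p$ is a $2'$-root of unity the substitution $x\mapsto 2^{-r}x$ gives $\sigma(\Gamma)=\chi(2)^{r}\Gamma$, so $\gamma_{\la,\sigma}\delta_{\la,\sigma}$ takes the value $\chi(2)^{r}$ on $c$. As $\chi(2)=1$ exactly when $q\equiv\pm1\pmod 8$ (the $\F_q$-analogue of the second supplement to quadratic reciprocity), this yields $\gamma_{\la,\sigma}\delta_{\la,\sigma}=1$ for $q\equiv\pm1\pmod 8$ and $\gamma_{\la,\sigma}\delta_{\la,\sigma}=(-1)^{r}$ for $q\equiv\pm3\pmod 8$; and $\gamma_{\la,\sigma}\delta_{\la,\sigma}$ is nontrivial iff $\chi(2)^{r}=-1$, i.e.\ iff $\sigma(\Gamma)\neq\Gamma$, i.e.\ iff $\sigma$ moves $\sqrt{\omega q}$, which is the final assertion.

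The step I expect to be the main obstacle is the square-root bookkeeping of the second paragraph: pinning $\Lambda_\la(\tilde c)$ down to an \emph{exact} square root of $\omega$ (this needs the explicit form of $\Lambda$ together with $\la^\sigma=\la$), and the observation that the uncontrolled sign $\sigma(\sqrt{-1})$ appearing in $\gamma_{\la,\sigma}(c)$ when $q\equiv 3\pmod 4$ is precisely cancelled by the one in $\delta_{\la,\sigma}(c)$ — along with correctly identifying the representative $c$ of $C(\la)$ and the parity of its $\bg{W}$-length. Once these are in place, the concluding Gauss-sum computation is routine, but it is there that the arithmetic condition on $q\bmod 8$ actually enters.
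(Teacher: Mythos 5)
Your argument is correct, but note that the paper itself does not prove this lemma: it is imported verbatim as \cite[Corollary 14.3]{SFT20}, so there is no internal proof to compare against. What you have written is in effect a self-contained reconstruction, and it holds up. The identification of $C(\la)=\langle s_{2e_n}\rangle$ of odd length (via the dual centralizer of type $\type{B}_a\times\type{D}_b$, $b\geq 1$, with component group of order $2$, using \prettyref{lem:W(s)}), the evaluation $\gamma_{\la,\sigma}(c)=\sigma(\sqrt q)/\sqrt q$ from \prettyref{lem:SFT20Lemma3.11}, the computation $\Lambda_\la(\tilde c)^{2}=\la(h_{\alpha_n}(-1))=\omega$ for the lift $\tilde c=n_{\alpha_n}(-1)$, and the Gauss-sum evaluation $\sigma(\sqrt{\omega q})/\sqrt{\omega q}=\chi(2)^{r}$ together give exactly the stated trichotomy on $q\bmod 8$ and the reformulation in terms of $\sqrt{\omega q}$. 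Structurally this is the type-$\type{C}$ analogue of the paper's own \prettyref{prop:equivextB} computation, with precisely the difference you identify: here the square $\la(h_{\alpha_n}(-1))$ equals $\omega$ rather than $1$, which is what makes the product $\gamma_{\la,\sigma}\delta_{\la,\sigma}$ sensitive to $\sigma$ when $q\equiv\pm3\pmod 8$.

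Two small corrections. First, you do not need ``the explicit construction of $\Lambda$'' — and \prettyref{lem:exceptLambda} in any case covers $p=2$ and certain exceptional types, not $\type{C}_n$ with $q$ odd; the identity $\delta_{\la,\sigma}(c)=\sigma(\Lambda_\la(\tilde c))/\Lambda_\la(\tilde c)$ is purely definitional once $\la^\sigma=\la$, since any extension of $\la$ to $N_\la$ is linear and the ratio is independent of the chosen lift of $c$ (two lifts differ by $t\in T$ with $\la(t)=\pm1$, which is $\sigma$-fixed). The only place the specific extension map of the boxed assumption enters is the triviality of $\delta_{\la,\sigma}$ on $R(\la)$, which \prettyref{lem:extRla} supplies, as you say. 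Second, the assumption that $q$ is a non-square is not in the statement and is not needed: if $q$ is a square then $q\equiv1\pmod 8$, $\gamma_{\la,\sigma}=1$ by \prettyref{lem:SFT20Lemma3.11}, $\omega=1$ so $\Lambda_\la(\tilde c)=\pm1$ and $\delta_{\la,\sigma}(c)=1$; your Gauss-sum formula covers this case as well, since the sum is then rational.
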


We begin by recalling the group $M$ and bijection from \cite[Section 4.4]{Malle08}, which depends on whether or not $n$ is a power of $2$.  Write $n=\sum_{j\in{J}}2^j$ for the $2$-adic decomposition of $n$.  

\subsection{The Case $n=2^j$}

Here suppose that $|J|=1,$ so that $n$ is a power of $2$.  In this case, the group $M$ from \cite[Section 4.4]{Malle08} is a wreath product $\Sp_n(q)\wr 2$.  Write $X$ for the base group $\Sp_{n}(q)\times \Sp_n(q)$, embedded naturally as $2$-by-$2$ block matrices.  The members of $\irr_{2'}(M)$ are the extensions to $M$ of the characters $\mu\otimes\mu$ of $X$, where $\mu\in\irr_{2'}(\Sp_n(q))$.  For $\mu\in\irr_{2'}(\Sp_n(q))$, we let $\Xi(\mu)$ denote the corresponding extension of $\mu\otimes\mu$ to $M$ such that $\Xi(\mu^\sigma)=\Xi(\mu)^\sigma$ for all $\sigma\in\gal$, guaranteed by \prettyref{lem:semidirelemab}(b).  Note that by Gallagher's theorem, the other extension of $\mu\otimes\mu$ is $\Xi(\mu)\beta$, where $\beta\in\irr(M/X)$ has order 2, and $\Xi(\mu^\sigma)\beta=\Xi(\mu)^\sigma\beta=(\Xi(\mu)\beta)^\sigma$ since $\beta$ is necessarily fixed by $\gal$.

\begin{lemma}\label{lem:Cnpower2}
Let $G=\Sp_{2n}(q)$ for $q$ an odd power of an odd prime and $n\geq 2$ a power of $2$, and let $\ell=2$.  Then \prettyref{cond:condition} holds for $G$, taking $M=\Sp_n(q)\wr 2$ as above.  
\end{lemma}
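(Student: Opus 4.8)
The statements that $M=\Sp_n(q)\wr 2$ is a proper $\aut(G)_Q$-stable subgroup containing $\norm_G(Q)$, that there is an $\aut(G)_Q$-equivariant bijection $\irr_{2'}(G)\to\irr_{2'}(M)$, and that it preserves the underlying character of $\zen(G)$, are already provided by Malle in \cite[Section 4.4]{Malle08}; so the only thing to establish is $\galh$-equivariance. The plan is to describe Malle's bijection $\Pi$ in the coordinates fixed just before the lemma, where $\irr_{2'}(M)=\{\Xi(\mu),\,\Xi(\mu)\beta\mid\mu\in\irr_{2'}(\Sp_n(q))\}$ with $\Xi(\mu^\sigma)=\Xi(\mu)^\sigma$ and $\beta$ fixed by $\gal$, and then to check that the ``which extension'' bit in $\Pi$ is $\gal$-coherent (or to re-choose it so, using that $\beta$ is $\aut(G)_Q\times\gal$-fixed).

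On the $G$-side I would first pin down the source. Since $n=2^j$ is even, \prettyref{thm:MS7.7} gives that every $\chi\in\irr_{2'}(G)$ lies in the principal series, so by \prettyref{lem:MS7.9} we may write $\chi=\HC_T^G(\la)_\eta$ with $2\nmid[W:W(\la)]$ and $\eta\in\irr_{2'}(W(\la))$. Because a Sylow $2$-subgroup of $W=W(\type{C}_n)$ contains the full sign group, odd index forces $\la$ to be invariant under all sign changes, hence $\la^2=1$; and since $n=2^j$, this in turn forces $\la$ to be one of the two $W$-invariant characters $1,\la_-$. In particular $\la^\sigma=\la$, the group $W(\la)$ is isomorphic to a Weyl group of type $\type{C}_n$ (so all its characters are rational), and $C(\la)\leqslant C_2$ by \prettyref{lem:W(s)}. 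Feeding this into \prettyref{thm:GaloisAct}, together with $\eta^{(\sigma)}=\eta^\sigma=\eta$ from \prettyref{cor:newsigmaaction}, $\delta'_{\la,\sigma}=\delta_{\la,\sigma}$ from \prettyref{lem:extRla}, and $\delta_{\la,\sigma}^{2}=1$ from \prettyref{cor:deltaorder2}, I obtain the clean formula $\chi^\sigma=\HC_T^G(\la)_{\kappa_{\la,\sigma}\eta}$, where $\kappa_{\la,\sigma}:=\gamma_{\la,\sigma}\delta_{\la,\sigma}\in\irr(C(\la))\cong\{\pm1\}$ is the character computed in \prettyref{lem:rdelta2}: trivial for all $\sigma$ when $q\equiv\pm1\pmod 8$, and otherwise equal to $-1_{C(\la)}$ exactly when $\sigma$ moves $\sqrt{\omega q}$ (with $\omega=(-1)^{(q-1)/2}$). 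Thus $\galh$ acts on $\irr_{2'}(G)$ through the at-most-order-two quotient $\Gal(\Q(\sqrt{\omega q})/\Q)$, trivially on the unipotent series $\la=1$ and via the $\kappa$-twist on the $\la_-$-series.

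Correspondingly, $\galh$ acts on $\irr_{2'}(M)$ through the same quotient. Indeed, the odd-degree characters of $\Sp_n(q)=\Sp_{2^j}(q)$ have fields of values contained in $\Q(\sqrt{\omega q})$, with $\mu^\sigma$ determined by whether $\sigma$ moves $\sqrt{\omega q}$: for $j\geq 2$ this follows by the same analysis applied to $\Sp_{2^j}(q)$ (an easy induction using \prettyref{thm:GaloisAct}, \prettyref{lem:rdelta2}, \prettyref{cor:newsigmaaction}), and for $j=1$ it is the classical description of $\irr_{2'}(\SL_2(q))$ (the trivial and Steinberg characters, plus the two cuspidal characters of degree $(q-1)/2$, which are rational over $\Q(\sqrt{\omega q})$ and interchanged exactly when $\sqrt{\omega q}$ is moved). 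Since $\Xi(\mu)^\sigma=\Xi(\mu^\sigma)$, $\beta$ is $\gal$-fixed, and every member of $\irr_{2'}(M)$ is $\Xi(\mu)$ or $\Xi(\mu)\beta$, the claim follows.

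It therefore remains to show that $\Pi$ matches these two $\Gal(\Q(\sqrt{\omega q})/\Q)$-actions, i.e.\ $\Pi(\chi^\sigma)=\Pi(\chi)^\sigma$ for a (any) $\sigma\in\galh$ moving $\sqrt{\omega q}$ — the case of $\sigma$ fixing $\sqrt{\omega q}$ being vacuous, since then both actions are trivial. As $\Pi$ is a bijection carrying fixed points to fixed points, the content is that $\Pi$ sends each two-element $\sigma$-orbit on $\irr_{2'}(G)$ (necessarily inside the $\la_-$-series, where $(-1_{C(\la_-)})\eta\neq\eta$) to a two-element $\sigma$-orbit on $\irr_{2'}(M)$ (necessarily of the form $\{\Xi(\mu),\Xi(\mu^\sigma)\}$ with $\mu$ non-rational). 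This is the step where I expect the real work: one must unwind the construction of \cite[Section 4.4]{Malle08} — the explicit dictionary sending the parameter $(\la_-,\eta)$ of $\HC_T^G(\la_-)_\eta$ to a specific $\mu\in\irr_{2'}(\Sp_n(q))$ and a specific extension of $\mu\otimes\mu$ — and verify that twisting $\eta$ by $-1_{C(\la_-)}$ corresponds under $\Pi$ precisely to replacing $\mu$ by its Galois partner $\mu^\sigma$ (with the extension bit unchanged). The essential point that makes this possible, and that I would emphasize, is that \prettyref{lem:rdelta2} produces the \emph{same} quadratic field $\Q(\sqrt{\omega q})$ as the one governing the Galois orbits on $\irr_{2'}(\Sp_n(q))$, so no incompatibility can occur; what is left is combinatorial bookkeeping (tracking $C(\la_-)$-stability of the $R(\la_-)$-constituent of $\eta$ and the extension bit), handled separately for $\la=1$ (everything rational, $\Pi$ trivially $\galh$-equivariant) and $\la=\la_-$.
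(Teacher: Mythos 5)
Your analysis of the two Galois actions is essentially the paper's: rationality of unipotent characters plus \prettyref{lem:semidirelemab} handles the unipotent part, and on the non-unipotent side the action factors through $\Gal(\Q(\sqrt{\omega q})/\Q)$ via $\gamma_{\la,\sigma}\delta_{\la,\sigma}$ (\prettyref{lem:rdelta2}, \prettyref{prop:etasigma}), with the crucial observation that the same quadratic field, independent of $k$, governs both $\Sp_{2n}(q)$ and $\Sp_n(q)$. But your proof stops exactly where the conclusion has to be drawn: you declare that "the real work" is to unwind the explicit dictionary of \cite[Section 4.4]{Malle08} and verify that twisting $\eta$ by $-1_{C(\la_-)}$ corresponds under the fixed bijection $\Pi$ to replacing $\mu$ by $\mu^\sigma$, and you leave that verification as expected "combinatorial bookkeeping" without carrying it out. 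As written, the equivariance of $\Pi$ for $\sigma$ moving $\sqrt{\omega q}$ is asserted to be plausible ("no incompatibility can occur") but not proved, so the statement is not established.

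The gap also reflects a misreading of what Malle provides, and closing it the way you propose is harder than necessary. The bijection of \cite[Theorem 4.10]{Malle08} is not a canonical dictionary that must be checked; it is \emph{any} bijection sending odd-degree unipotent characters of $G$ to the extensions $\Xi(\mu)$, $\Xi(\mu)\beta$ with $\mu$ unipotent, and non-unipotent ones to those with $\mu$ non-unipotent. The paper exploits precisely this freedom: since by \prettyref{lem:rdelta2} the $\sigma$-orbits of non-unipotent odd-degree characters on both sides are singletons when $\sqrt{\omega q}^\sigma=\sqrt{\omega q}$ and pairs $\{\HC_T^G(\la)_\eta,\HC_T^G(\la)_{(-1_{C(\la)})\eta}\}$, respectively $\{\Xi(\HC_{T'}^{\Sp_n(q)}(\la')_{\eta'}),\Xi(\HC_{T'}^{\Sp_n(q)}(\la')_{(-1_{C(\la')})\eta'})\}$ (or the same pair multiplied by $\beta$) otherwise, one simply \emph{chooses} the bijection to map pairs to pairs. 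No unwinding of Malle's parametrization is needed, and Condition $\dagger$ only asks for existence of an equivariant bijection. If you insist on verifying equivariance of one specific fixed bijection, you must actually supply the bookkeeping you deferred; alternatively, rephrase your last step as a choice of bijection along $\sigma$-orbits, which is exactly the paper's argument and renders that step immediate.
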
 
\begin{proof}
Keeping the notation as above, the bijection $\irr_{2'}(G)\rightarrow\irr_{2'}(M)$ is constructed in \cite[Theorem 4.10]{Malle08} as any bijection sending odd-degree unipotent characters of $G$ to the characters of the form $\Xi(\mu)$ and $\Xi(\mu)\beta$ for $\mu$ an odd-degree unipotent character of $\Sp_n(q)$, and sending non-unipotent characters to those of the form $\Xi(\mu)$ and $\Xi(\mu)\beta$ for $\mu$ non-unipotent.  From \cite[Corollary 1.12]{lus02}, we know the unipotent characters of $G$ and $X$ are rational-valued, and hence fixed by every $\sigma\in\gal$.  Then the same is true for the characters of $\irr_{2'}(M)$ lying above unipotent characters of $X$, by \prettyref{lem:semidirelemab}. In particular, the bijection is $\galh$-equivariant on unipotent characters.     

Using \cite[Lemma 4.1 and Proposition 4.6]{Malle08} and \prettyref{thm:MS7.7}, we see that for any positive integer $k$, the non-unipotent characters of $\Sp_{2^k}(q)$ of odd degree lie in a single Harish-Chandra series, namely a principal series $\mathcal{E}(\Sp_{2^k}(q), T, \la)$ with $\la^2=1$.  
By \prettyref{prop:etasigma}, $\eta=\eta^{(\sigma)}=\eta^\sigma$ for all $\eta\in\irr(W(\la))$ here.  Hence the action of $\galh$ on $\mathcal{E}(\Sp_{2^k}(q), T, \la)$ is determined by the character $\gamma_{\la,\sigma}\delta_{\la,\sigma}$, by \prettyref{thm:GaloisAct}. 

From \prettyref{lem:rdelta2}, we note that $\gamma_{\la,\sigma}\delta_{\la,\sigma}$ is independent of $k$, so that $\galh$ permutes pairs of non-unipotent members of $\irr_{2'}(G)$ exactly when it permutes pairs of non-unipotent members of $\irr_{2'}(X)$ (namely, when $\sqrt{\omega q}^\sigma\neq\sqrt{\omega q}$), and in an analogous way.  Then the bijection $\irr_{2'}(G)\rightarrow\irr_{2'}(M)$ may be chosen to be $\galh$-equivariant by ensuring that each pair $\{\HC_T^G(\la)_\eta, \HC_T^G(\la)_{(-1_{C(\la)})\eta}\}$ is mapped to pairs of the form \[\left\{\Xi\left(\HC_{T'}^{\Sp_n(q)}(\la')_{\eta'}\right), \Xi\left(\HC_{T'}^{\Sp_n(q)}(\la')_{(-1_{C(\la_1)})\eta'}\right)\right\}\] or of the form \[\left\{\Xi\left(\HC_{T'}^{\Sp_n(q)}(\la')_{\eta'}\right)\beta, \Xi\left(\HC_{T'}^{\Sp_n(q)}(\la')_{(-1_{C(\la_1)})\eta'}\right)\beta\right\}\] for $\eta\in\irr_{2'}(W(\la))$ and $\eta'\in\irr_{2'}(W(\la'))$, where $\mathcal{E}(G, T,\la)$ and $\mathcal{E}(\Sp_n(q), T',\la')$ are the unique Harish-Chandra series of $G$ and $\Sp_n(q)$, respectively, containing non-unipotent odd-degree characters. This proves the statement.
%
%
%
%
\end{proof}

\subsection{The Case $n\neq 2^j$}

When $|J|\geq 2$, let $m=\max_{j\in J}\{2^j\}$.  In this case, Malle defines the subgroup $M$ in \cite[Theorem 4.11]{Malle08} as $\Sp_{2(n-m)}(q)\times \Sp_{2m}(q)$, naturally embedded as block matrices.  Writing $M_1:=\Sp_{2(n-m)}(q)$ and $M_2:=\Sp_{2m}(q)$, we see using \cite[Propositions 4.2, 4.5]{Malle08} that the members of $\irr_{2'}(X)$ for any $X\in\{G, M_1, M_2\}$ are found in Lusztig series $\mathcal{E}(X, t)$ indexed by semisimple classes corresponding to the identity $t=1$ and classes of $2$-central involutions $t$ of the dual group $X^\ast$,  and that these semisimple classes are then in bijection with subsets of $J$, $J\setminus \{m\}$, and $\{m\}$, respectively. Here the empty subset corresponds to the series of unipotent characters.  Write $\mathcal{E}_{2'}(X, t)$ for the set $\mathcal{E}(X, t)\cap \irr_{2'}(X)$.

\begin{proposition}
Let $G=\Sp_{2n}(q)$ for $q$ an odd power of an odd prime and $n\geq 2$, and let $\ell=2$.  Then \prettyref{cond:condition} holds for $G$, taking $M$ to be the subgroup of $G$ described above.  
\end{proposition}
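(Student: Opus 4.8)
The plan is to take the $\aut(G)_Q$-equivariant bijection $\irr_{2'}(G)\to\irr_{2'}(M)$ of \cite[Theorem 4.11]{Malle08} and show that it may be chosen $\galh$-equivariant as well, in the spirit of \prettyref{lem:Cnpower2}. (If $n=2^j$ this is \prettyref{lem:Cnpower2}, so we assume $|J|\geq2$ and write $M_1=\Sp_{2(n-m)}(q)$, $M_2=\Sp_{2m}(q)$, $m=2^{j_0}$, $j_0=\max J$.) As recalled above, $\irr_{2'}(G)$ and $\irr_{2'}(M)=\irr_{2'}(M_1)\times\irr_{2'}(M_2)$ are partitioned into the pieces $\mathcal{E}_{2'}(X,t)$ over $t=1$ or $t$ a $2$-central involution, indexed respectively by the subsets $S\subseteq J$ and by subsets $S_1\subseteq J\setminus\{j_0\}$ together with $S_2\subseteq\{j_0\}$ --- i.e.\ again by subsets of $J$, via $S\leftrightarrow(S\cap(J\setminus\{j_0\}),\,S\cap\{j_0\})$ --- and Malle's bijection may be taken to carry $\mathcal{E}_{2'}(G,t_S)$ onto $\mathcal{E}_{2'}(M_1,t_{S_1})\times\mathcal{E}_{2'}(M_2,t_{S_2})$. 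Since each relevant $t$ has order dividing $2$, the associated rational Lusztig series are $\galh$-stable, hence so is each piece $\mathcal{E}_{2'}(X,t)$; it therefore suffices to make the bijection $\galh$-equivariant on each piece, and for that it is enough to check that $\galh$ acts "in the same way" on a piece of $G$ as on the corresponding piece of $M_1\times M_2$.

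For $t=1$ this is clear, since the unipotent characters of $G$, $M_1$, $M_2$ are rational by \cite[Corollary 1.12]{lus02} and hence $\galh$-fixed, so any bijection is $\galh$-equivariant on the unipotent pieces. For a piece $\mathcal{E}_{2'}(X,t)$ with $t\neq1$ lying in the principal series, \prettyref{lem:MS7.9}, \prettyref{thm:MS7.7} and \cite[Proposition 4.6]{Malle08} show its odd-degree characters form a single Harish-Chandra series $\mathcal{E}(X,T_X,\la)$, with $T_X$ a maximally split torus of $X$ and $\la^2=1$ (for $X=\Sp_2(q)=\SL_2(q)$, excluded from those results, one argues instead directly with the two characters of degree $(q\mp1)/2$). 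Then by \prettyref{thm:GaloisAct}, \prettyref{prop:etasigma} or \prettyref{cor:newsigmaaction} (so $\eta^{(\sigma)}=\eta$, as $C(\la)$ has order at most $2$), \prettyref{cor:deltaorder2} and \prettyref{lem:extRla}, each $\sigma\in\galh$ acts on this piece by $\HC_{T_X}^X(\la)_\eta\mapsto\HC_{T_X}^X(\la)_{(\gamma_{\la,\sigma}\delta_{\la,\sigma})\eta}$, and \prettyref{lem:rdelta2} computes $\gamma_{\la,\sigma}\delta_{\la,\sigma}\in\irr(C(\la))\cong\{\pm1\}$ purely in terms of the residue of $q$ modulo $8$ and of $\sigma$ --- importantly, independently of $X$. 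Thus $\galh$ either fixes every character in these pieces (exactly when $\sqrt{\omega q}$ is $\sigma$-fixed, $\omega=(-1)^{(q-1)/2}$) or swaps the two members of each pair $\{\HC_{T_X}^X(\la)_\eta,\HC_{T_X}^X(\la)_{(-1_{C(\la)})\eta}\}$, and does so identically for $G$, $M_1$, $M_2$. As in the proof of \prettyref{lem:Cnpower2}, the bijection may then be chosen on each such piece to send pairs to pairs, yielding $\galh$-equivariance there; and that $M$ is $\aut(G)_Q$-stable, contains $\norm_G(Q)$ for $Q\in\syl_2(G)$, and that paired characters lie over the same character of $\zen(G)$, are all part of \cite[Theorem 4.11]{Malle08} together with \cite[Proposition 2.5]{CS13}.

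The remaining, and main, obstacle is the case $q\equiv3\pmod4$ with $n$ odd, where by \prettyref{thm:MS7.7} and \prettyref{lem:MS7.9} a piece $\mathcal{E}_{2'}(G,t_S)$ can instead consist of characters $\HC_L^G(\la)_\eta$ with $L=\Sp_2(q)\times(q-1)^{n-1}$ and $\la$ a cuspidal character of degree $(q-1)/2$; the same occurs for $M_1$ (whose $n-m$ is odd whenever $n$ is), while $M_2=\Sp_{2m}(q)$ has $m$ even and contributes no such piece. Here \prettyref{sec:genericalg} does not apply verbatim, being written for $L=T$, so the plan is to adapt it: $R(\la)\cong W^\circ(s)$ is still a Weyl group of classical type, whose (multi-parameter) Iwahori--Hecke algebra is split over $\Q(\mathbf{u})$, so the argument of \prettyref{prop:etasigma} should yield $\eta^{(\sigma)}=\eta^\sigma=\eta$ for $\eta\in\irr_{2'}(W(\la))$; and the torus part of $\la$ is rational while the $\sigma$-action on the cuspidal degree-$(q-1)/2$ constituent of $\Sp_2(q)$ is governed, as in \prettyref{lem:rdelta2} and the computations of \cite{SFT20}, by whether $\sigma$ fixes $\sqrt{\omega q}$. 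Inserting this into \prettyref{thm:GaloisAct} shows once more that $\galh$ acts on $\mathcal{E}_{2'}(G,t_S)$ and on the corresponding piece of $M_1$ in the same uniform manner --- fixing all of it, or swapping the two characters of each pair --- so the pair-to-pair choice of Malle's bijection again gives $\galh$-equivariance, completing the proof. I expect the real work to be precisely this last step: transferring the extension-map and generic-algebra analysis of Sections~\ref{sec:extnmap}--\ref{sec:genericalg} from the principal series to the Harish-Chandra series of the cuspidal character of degree $(q-1)/2$, and pinning down the action of $\galh$ on that cuspidal character itself.
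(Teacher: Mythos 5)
There is a genuine gap, and you yourself flag it: the case $q\equiv3\pmod4$, $n$ odd, where the non-unipotent odd-degree characters lie in the Harish-Chandra series of the cuspidal degree-$(q-1)/2$ character of $L\cong\Sp_2(q)\times(q-1)^{n-1}$ rather than in a principal series. You correctly observe that the generic-algebra machinery of \prettyref{sec:genericalg} and the explicit extension-map computation of \prettyref{prop:equivextB} were set up only for $L=T$, and you propose ``adapting'' them to this cuspidal series --- identifying the splitting field of the Hecke algebra over $\Q(\mathbf{u})$, tracking $\eta^{(\sigma)}$, and pinning down the $\galh$-action on the cuspidal character $\psi$ itself --- but you do not actually carry any of this out. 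That is precisely the substantive content needed, not a routine transcription: $W(\la)$ is a relative Weyl group for a non-torus Levi, the degree parameters $p_{\alpha,\la}$ are different, and the $\gamma$, $\delta$, and $\eta^{(\sigma)}$ contributions all have to be recomputed in that setting before one can conclude the uniform pair-swapping behavior you assert.

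The paper avoids this detour entirely. Rather than re-deriving the Galois action piece by piece, it cites \cite[Theorem B]{SFT20} \emph{and its proof}, which already gives the complete explicit description: for $X\in\{G,M_1,M_2\}$, every non-unipotent member of $\irr_{2'}(X)$ --- whether of the form $\HC_T^X(\la)_\eta$ with $\la^2=1$ or $\HC_L^X(\psi_i\otimes\la_0)_\eta$ --- is fixed by $\sigma\in\galh_2$ when $\sqrt{\omega q}^\sigma=\sqrt{\omega q}$, and otherwise $\sigma$ swaps the two members of the pair $\{\HC_T^X(\la)_\eta,\HC_T^X(\la)_{(-1_{C(\la)})\eta}\}$, respectively $\{\HC_L^X(\psi_1\otimes\la_0)_\eta,\HC_L^X(\psi_2\otimes\la_0)_{(-1_{C(\la_0)})\eta}\}$, in both series simultaneously. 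This makes the uniform-action claim a black box, and the pair-to-pair choice of Malle's bijection then goes through exactly as you describe. So your overall structure (partition by subsets of $J$, rationality of unipotents, pair-matching on the non-unipotent pieces) matches the paper's; what is missing is the one hard input, which the paper supplies by citation and you leave as a plan. Note also that for the principal-series pieces the paper does not need \prettyref{prop:etasigma}/\prettyref{cor:newsigmaaction}/\prettyref{cor:deltaorder2} separately as you do, since \cite[Theorem B]{SFT20} already covers those characters as well.
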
 
\begin{proof}
By \prettyref{lem:Cnpower2}, we may assume $n$ is not a power of $2$ and keep the notation above.  Let $s_1\in M_1^\ast$ and $s_2\in M_2^\ast$ correspond to subsets $I_1$ and $I_2$ of $J\setminus\{m\}$ and $\{m\}$, respectively.  Then the bijection from \cite[Theorem 4.11]{Malle08} sends $\mathcal{E}_{2'}(M_1, s_1)\otimes \mathcal{E}_{2'}(M_2, s_2)\subset \irr_{2'}(M)$ to the set $\mathcal{E}_{2'}(G,s)\subset\irr_{2'}(G)$, where $s$ corresponds to the subset $I_1\cup I_2$ of $J$.  This bijection naturally sends products of unipotent characters to a unipotent character, and hence is $\galh$-equivariant on unipotent characters since they are again rational-valued.

Let $\chi\in\irr_{2'}(X)$ be a non-unipotent character in a series parametrized by a semisimple element corresponding to the subset $I$, where $X:=\Sp_{2k}(q)\in\{G, M_1, M_2\}$.  By \prettyref{thm:MS7.7},  either $\chi\in\mathcal{E}(X, T,\la)$ where $T$ is a maximally split torus of $X$ and $\la\in\irr(T)$ satisfies $\la^2=1$, or $q\equiv3\pmod4$, $0\in I$, and $\chi\in\mathcal{E}(X, L,\la)$, where $L\cong \Sp_2(q)\times T_0$ with $T_0$ a maximally split torus of $\Sp_{2(k-1)}(q)$.  In the latter case, we further have $\la=\psi\otimes \la_0$, where $\la_0\in\irr(T_0)$ satisfies $\la_0^2=1$ and $\psi$ is one of the two characters $\psi_1$ or $\psi_2$ of $\Sp_2(q)$ of degree $\frac{q-1}{2}$. 

Let $\omega\in\{\pm1\}$ be such that $q\equiv \omega \pmod 4$. Note that $\sigma\in\galh$ interchanges $\psi_1$ and $\psi_2$ if and only if $\sqrt{\omega q}^\sigma\neq\sqrt{\omega q}$.  Further, \cite[Theorem B]{SFT20} and its proof yields that more generally, the non-unipotent members of $\irr_{2'}(X)$ are fixed if $\sqrt{\omega q}^\sigma=\sqrt{\omega q}$ and are permuted by $\sigma$ analogously in pairs via \[\left(\HC_T^G(\la)_\eta\right)^\sigma=\HC_T^G(\la)_{(-1_{C(\la)})\eta} \quad\text{ and }\quad \left(\HC_L^G(\psi_1\otimes \la_0)_\eta\right)^\sigma=\HC_L^G(\psi_2\otimes \la_0)_{{(-1_{C(\la_0)})\eta}},\] if $\sqrt{\omega q}^\sigma\neq\sqrt{\omega q}$.  

Recall that non-unipotent characters in $\irr_{2'}(G)$ are mapped to $\chi_1\otimes\chi_2 \in\irr_{2'}(M_1\times M_2)$ such that at least one of $\chi_1$ or $\chi_2$ is also non-unipotent.
Then this yields that the bijection $\irr_{2'}(G)\rightarrow\irr_{2}(M)$ may be fixed to be $\galh$-equivariant by sending pairs $\{\HC_T^G(\la)_\eta, \HC_T^G(\la)_{(-1_{C(\la)})\eta}\}$ or $\{\HC_L^G(\psi_1\otimes \la_0)_\eta, \HC_L^G(\psi_2\otimes \la_0)_{{(-1_{C(\la_0)})\eta}}\}$ to pairs in $\irr_{2'}(M)$ whose non-unipotent components are analogous pairs in $\irr_{2'}(M_1)$ and $\irr_{2'}(M_2)$ and whose unipotent components (if applicable) are equal.  Then the characters in these pairs are necessarily interchanged by $\sigma\in\galh$ if and only if the same is true for the pairs in $\irr_{2'}(G)$ mapped to them.
%
\end{proof}

This completes the proof of \prettyref{thm:main}\ref{maind=1}, as well as the case of $G=\Sp_{2n}(q)$ in \prettyref{thm:main}\ref{main2}.

\section{Remaining Cases for $\ell=2$ and $d_2(q)=2$}\label{sec:typeB}

In this section, we complete the proof of \prettyref{thm:main}\ref{main2}.  
In particular, we will consider the cases $G=\type{G}_2(q)$, $\tw{3}\type{D}_4(q)$, $\type{F}_4(q),$ $\type{E}_6^\epsilon(q)_{sc}$, $\type{E}_7(q)_{sc}$, $\type{E}_8(q)$, or
 $\type{B}_n(q)_{sc}$ with $n\geq3$, especially when $q\equiv 3\pmod 4$.  Here we write $\type{E}_6^\epsilon$ for $\epsilon\in\{\pm1\}$ to denote $\type{E}_6$ in the case $\epsilon=+1$ and $\tw{2}\type{E}_6$ in the case $\epsilon=-1$.  

 Recall that we have $\bg{T}=\cen_{\bg{G}}(\bg{S})$ for some Sylow $2$-torus $\bg{S}$ of $(\bg{G}, vF)$ and $N_1:=\norm_{\bg{G}}(\bg{S})^{vF}=\bg{N}^{vF}$.
 By \cite[Theorem 7.8]{malleheightzero}, if $q\equiv 3\pmod 4$, then $N_1$ contains $\norm_G(Q)$ for a Sylow $2$-subgroup $Q$ of $G$ (and hence $N_1$ is also $\aut(G)_Q$-stable by  \cite[Proposition 2.5]{CS13}), and there is a bijection 
\[ \Omega_1\colon \irr_{2'}(G)\rightarrow\irr_{2'}(N_1),\] which is moreover $\aut(G)_Q$-equivariant by \cite[Proposition 4.5]{CS13} and \cite[Theorem 6.3]{MalleSpathMcKay2} combined with \cite[Theorem 2.12]{spath12}, and where corresponding characters lie over the same character of $\zen(G)$.  
 Hence it suffices to show that this bijection can be chosen to further be $\galh_2$-equivariant.  In fact, for the listed groups aside from $\type{E}_6^\epsilon(q)$, we will show that every member of $\irr_{2'}(G)$ and of $\irr_{2'}(N_1)$ is rational-valued.

\begin{proposition}\label{prop:d2rational}
Let $q$ be odd and let $G=\type{G}_2(q)$, $\tw{3}\type{D}_4(q)$, $\type{F}_4(q),$ $\type{E}_7(q)_{sc}$, $\type{E}_8(q)$, or
 $\type{B}_n(q)_{sc}$ with $n\geq 3$.  Then every member of $\irr_{2'}(G)$ is rational-valued.  Further, keeping the notation above, we have every member $\irr_{2'}(N_1)$ is rational-valued when $q\equiv 3\pmod 4$.
 \end{proposition}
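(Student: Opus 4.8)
The plan is to combine the parametrization of odd-degree characters with the explicit description of the Galois action in \prettyref{thm:GaloisAct}, reducing rationality of $\chi\in\irr_{2'}(G)$ to the vanishing of the three obstructions $\gamma_{\la,\sigma}$, $\delta'_{\la,\sigma}$, and the discrepancy between $\eta^{(\sigma)}$ and $\eta^\sigma$, together with rationality of $\la$ itself; then to run the same, but simpler, argument for $N_1$. Concretely, by \prettyref{lem:MS7.9} every $\chi\in\irr_{2'}(G)$ has the form $\HC_T^G(\la)_\eta$ with $T$ maximally split, $2\nmid[W:W(\la)]$, and $\eta\in\irr_{2'}(W(\la))$ (the ``type $\type{C}_n$'' alternative of \prettyref{lem:MS7.9} does not occur for the listed groups). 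By \prettyref{thm:GaloisAct} it then suffices to show, for every $\sigma\in\gal$: (i) $\la^\sigma=\la$; (ii) $\gamma_{\la,\sigma}=1$; (iii) $\delta'_{\la,\sigma}=1$; (iv) $\eta^{(\sigma)}=\eta$; whereupon $\chi^\sigma=\HC_T^G(\la)_\eta=\chi$.

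For (i), the groups in the statement are exactly those permitted in \prettyref{lem:sinvol} (not of type $\type{A}_n$, $\type{C}_n$, $\type{E}_6$, or $\type{D}_{2n+1}$), so the semisimple $s$ with $\chi\in\mathcal{E}(G,s)$ satisfies $s^2=1$; transporting through $\irr(T)\cong T^\ast$ gives $\la^2=1$, hence $\la$ is $\{\pm1\}$-valued and rational. Point (ii) is \prettyref{prop:Clambdaeven}. For (iii): when $\bG$ has connected center ($\type{G}_2$, $\type{F}_4$, $\type{E}_8$), or has $\zen(\bG)^F=1$ by the triality action ($\tw{3}\type{D}_4$), \prettyref{lem:W(s)} forces $C(\la)=1$, so $W(\la)=R(\la)$ and $\delta'_{\la,\sigma}=1$; and for $\type{B}_n$ ($n\geq3$) and $\type{E}_7$, the hypothesis $\mathcal E(G,T,\la)\cap\irr_{2'}(G)\ne\emptyset$ places us in \prettyref{prop:equivextB}, giving $\delta_{\la,\sigma}=1$, which equals $\delta'_{\la,\sigma}$ by \prettyref{lem:extRla}. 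For (iv), in every case $C(\la)$ is a (possibly trivial) elementary abelian $2$-group --- trivial for the connected-center and $\tw{3}\type{D}_4$ cases, and a subgroup of $\zen(\bG)^F\cong\Z/2$ for $\type{B}_n$ and $\type{E}_7$ --- so \prettyref{cor:newsigmaaction} yields $\eta^{(\sigma)}=\eta=\eta^\sigma$ (this also covers the unipotent case $\la=1$, where $R(\la)$ may have a $\type{G}_2$, $\type{E}_7$, or $\type{E}_8$ component, since the relevant odd-degree $\eta$ fall outside the even-degree exceptions of \prettyref{lem:K0splittingfieldH0}).

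For $N_1$ when $q\equiv 3\pmod 4$, I would argue in parallel, using that $N_1=T_1V_1$ with $V_1=V$ and $H_1=H$ by \prettyref{lem:V1V}, so the extension map $\Lambda_1$ with respect to $T_1\lhd N_1$ from \prettyref{cor:exceptLambda1} (and its verbatim analogue for type $\type{B}_n$) is available, and every $\chi_1\in\irr_{2'}(N_1)$ has the form $\Ind_{(N_1)_{\la_1}}^{N_1}(\Lambda_1(\la_1)\eta_1)$ with $2\nmid[W:W(\la_1)]$ and $\eta_1\in\irr_{2'}(W(\la_1))$. Here the Galois action is governed by ordinary Clifford theory with no Harish--Chandra induction, hence no $\gamma$-correction: $\chi_1^\sigma=\Ind_{(N_1)_{\la_1}}^{N_1}(\delta_{\la_1,\sigma}\Lambda_1(\la_1^\sigma)\eta_1^\sigma)$, so it suffices that $\la_1^\sigma=\la_1$, $\delta_{\la_1,\sigma}=1$, and $\eta_1^\sigma=\eta_1$. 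Since a Sylow $2$-subgroup of $W$ contains all sign changes, any $\la_1$ of odd index satisfies $\la_1^2=1$ and is rational; the analogue of \prettyref{lem:extRla} (same construction of $\Lambda_1$) shows $\delta_{\la_1,\sigma}$ factors through $C(\la_1)$, which is trivial except possibly for $\type{E}_7$ and for $\type{B}_n$ with $n=2^a$ --- for $\type{E}_7$ the centralizer-structure analysis of \prettyref{prop:Clambdaeven} shows disconnected centralizers of $2$-central involutions have even index in $W$, so $C(\la_1)=1$, while for $\type{B}_n$ the Chevalley computation of \prettyref{prop:equivextB} applies because $4\mid(q+1)$ and $4\mid n$ force $\Lambda_1(\la_1)(c)^2=\la_1(c^2)=(-1)^{n(q+1)/4}=1$; finally $\eta_1^\sigma=\eta_1$ by \prettyref{lem:semidirelemab}(a), since $W(\la_1)=R(\la_1)\rtimes C(\la_1)$ with $C(\la_1)$ elementary abelian $2$ and $R(\la_1)$ a Weyl group. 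Thus $\chi_1^\sigma=\chi_1$.

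The main obstacle is the $N_1$ side: although conceptually easier (no generic-algebra subtleties enter), it requires transporting the structural inputs --- the parametrization of $\irr_{2'}(N_1)$, the extension map $\Lambda_1$, the analogue of \prettyref{lem:extRla}, and the case-by-case triviality of $\delta_{\la_1,\sigma}$ --- from the maximally split torus $T$ to the twisted torus $T_1=\bg{T}^{vF}$, and in particular checking that the $\type{E}_7$ centralizer argument and the $\type{B}_n$ Chevalley-relation computation survive there and that the relevant $\la_1$ really have order dividing $2$.
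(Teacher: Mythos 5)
Your proposal is correct and follows essentially the same route as the paper's own proof: reduce to the four obstructions $\la^\sigma=\la$, $\gamma_{\la,\sigma}=1$, $\delta'_{\la,\sigma}=1$, $\eta^{(\sigma)}=\eta^\sigma=\eta$ via \prettyref{thm:GaloisAct}, kill them with \prettyref{lem:sinvol}, \prettyref{prop:Clambdaeven}, \prettyref{cor:exceptLambda1}/\prettyref{prop:equivextB}, and \prettyref{cor:newsigmaaction} respectively, and then transport the whole analysis to $N_1$.

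Two small remarks on presentation rather than substance. First, for the $G$-side argument, the paper invokes \prettyref{cor:exceptLambda1} directly for the triviality of $\delta_{\la,\sigma}$ in the connected-centre and $\tw{3}\type{D}_4$ cases, whereas you deduce it from $C(\la)=1$ via \prettyref{lem:W(s)} and \prettyref{lem:extRla}; both are fine and the paper's Corollary is itself proved this way for $\tw{3}\type{D}_4$. Second, on the $N_1$-side, you justify $\la_1^2=1$ by a ``sign changes'' argument, which is really a $-1\in W$ argument, whereas the paper passes through the parametrization by semisimple $s\in T_1^\ast$ centralizing a Sylow $2$-subgroup of $N_1^\ast$ and then applies the centralizer analysis behind \prettyref{lem:sinvol} to get $s^2=1$. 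Your version is equivalent for the types at hand since $-1$ is a central involution in the relevant (twisted) Weyl group, but it would be cleaner to phrase it via $s^2=1$ as in the paper, since ``sign changes'' does not literally apply to $\type{G}_2$, $\type{F}_4$, $\type{E}_7$, $\type{E}_8$. The rest --- including the $q+1$-replacement in the Chevalley computation for $\type{B}_n$, the use of \eqref{eq:W_1R_1} to see $R_1(\la_1)$ is a Weyl group, and \prettyref{lem:semidirelemab}(a) for $\eta_1^\sigma=\eta_1$ --- matches the paper.
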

 
Before we prove this statement, we introduce a little more notation for the situation $q\equiv 3\pmod 4$.  Let $(\bG^\ast, (vF)^\ast)$ be dual to $(\bG, vF)$.  By \cite[Lemma 3.3]{malleheightzero}, the torus $\bT^\ast$ dual to $\bT$ can be identified with $\cen_{\bG^\ast}(\bg{S}^\ast)$ and we write $\bg{N}^\ast:=\norm_{\bG^\ast}(\bg{S}^\ast)$,  $N_1^\ast:=(\bg{N}^\ast)^{(vF)^\ast}$, and $T_1^\ast:=(\bT^\ast)^{(vF)^\ast}$.

By the proof of \cite[Theorem 7.8]{malleheightzero}, we see that both $\irr_{2'}(\bG^{vF})$ (and hence $\irr_{2'}(G)$ since $G\cong \bG^{vF}$) and $\irr_{2'}(N_1)$ are in bijection with pairs $(\lambda_1, \eta_1)$ or $(s, \eta_1)$, where $\lambda_1\in\irr(T_1)$ satisfies $[N_1:(N_1)_{\la_1}]$ is odd and  $\la_1\in\mathcal{E}(T_1, s)$ with $s\in T_1^\ast$ a semisimple element centralizing a Sylow $2$-subgroup of $N_1^\ast$ (and hence of ${\bG^\ast}^{(vF)^\ast}$), up to $N_1^\ast$-conjugation; and $\eta_1\in\irr_{2'}(W_1(\la_1))$.  Here we define $W_1(\la_1):=(N_1)_{\la_1}/T_1$, which by \cite[Proposition 7.7]{malleheightzero} is isomorphic to $W_{N_1^\ast}(s):=\cen_{N_1^\ast}(s)/T_1^\ast$.

Now, the member of $\irr_{2'}(N_1)$ corresponding to $(\la_1, \eta_1)$ is of the form $\Ind_{(N_1)_{\la_1}}^{N_1}(\Lambda_1(\la_1)\eta_1)$, where $\Lambda_1$ is an extension map with respect to $T_1\lhd N_1$. Note that since the Weyl group of $\bG$ and of $\bg{N}$ are the same and $N_1=\bg{N}^{vF}$, we have $W_{N_1^\ast}(s)$ can be thought of as the fixed points under $(vF)^\ast$ of the Weyl group of the possibly disconnected group $\cen_{\bG^\ast}(s)$, which in turn is the set of $w\in (\bg{W}^\ast)^{(vF)^\ast}$ such that $s^w=s$.    Write $W_1(s)$ for this group, so that $W_1(s)\cong W_1(\la_1)$.  Further, write $W_1^\circ(s)$ for the fixed points under $(vF)^\ast$ of the Weyl group of the connected component $\cen_{\bG^\ast}(s)^\circ$.  The group $W_1^\circ(s)$ is then a true Weyl group (in the sense of $(B,N)$ pairs and Coxeter groups) and is the subgroup of $W_1(s)$ generated by the simple reflections corresponding to $\alpha^\ast\in\Phi^\ast$ such that $\alpha^\ast(s)=1$, where we write $\Phi^\ast$ for the root system of $\bG^\ast$. (See, e.g. \cite[Remark 2.4]{dignemichel}.)  Then by the isomorphism $\irr(T_1)\cong T_1^\ast$ given by \cite[Propositions 4.2.3, 4.4.1]{carter2}, it follows that 
\begin{equation}\label{eq:W_1R_1}
W_1^\circ(s)\cong  R_1(\la_1)\quad\text{ and }\quad W_1(s)/W_1^\circ(s)\cong W_1(\la_1)/R_1(\la_1),
\end{equation}
 where we define $R_1(\la_1)$ to be the reflection group generated by the simple reflections $s_\alpha$ for $\alpha\in\Phi$ such that $s_\alpha\in W_1(\la_1)$ and $T_1\cap X_{\pm\alpha}$ is in the kernel of $\la_1$.

 \begin{proof}[Proof of \prettyref{prop:d2rational}]
 
 By \prettyref{thm:MS7.7} and \prettyref{lem:MS7.9}, we know every member of $\irr_{2'}(G)$ is of the form $\chi=\HC_T^G(\la)_\eta\in\mathcal{E}(G, T,\la)$, where $[W:W(\la)]$ and $\eta(1)$ are both odd.  
 Note that using \prettyref{lem:sinvol}, we see $s^2=1$, and it follows that $\la^2=1$ and hence $\la^\sigma=\la$. 
Now, \prettyref{prop:Clambdaeven} yields that $\gamma_{\la,\sigma}$ is trivial for any $\sigma\in\gal$.  Further, $\eta=\eta^{(\sigma)}=\eta^\sigma$ for any $\eta\in\irr_{2'}(W(\la))$ and $\sigma\in\gal$, using 
\prettyref{cor:newsigmaaction}.  Further, \prettyref{cor:exceptLambda1}  and \prettyref{prop:equivextB}  yields that $\delta_{\la,\sigma}=1$ for any $\sigma\in\gal$. From this, we now see that $\chi^\sigma=\chi$ for each $\chi\in\irr_{2'}(G)$ and $\sigma\in\gal$, using \prettyref{thm:GaloisAct}, and we turn our attention to $\irr_{2'}(N_1)$.


 

Let $q\equiv 3\pmod 4$. As in the preceeding discussion, $\chi\in\irr_{2'}(G)$ and $\Omega_1(\chi)$ are also parametrized by a pair $(\la_1, \eta_1)$ with the properties described above. Let $\sigma\in\gal$ and write $\Ind_{(N_1)_{\la_1}}^{N_1}(\Lambda_1(\la_1)\eta_1)$ for $\Omega_1(\chi)$, so that $\Omega_1(\chi)^\sigma=\Ind_{(N_1)_{\la_1}}^{N_1}(\Lambda_1(\la_1)^\sigma\eta_1^\sigma)$.  Note that \prettyref{lem:semidirelemab} gives $\eta_1^\sigma=\eta_1$, since $R_1(\la_1)$ is a Weyl group by \eqref{eq:W_1R_1} and hence has rational-valued characters.  
So, noting that $\la_1^\sigma=\la_1$ since $s^2=1$, it now suffices to show $\delta=1$, where $\delta$ is the character of $W_1(\la_1)$ such that $\Lambda_1(\la_1)^\sigma=\Lambda_1(\la_1^\sigma)\delta$ guaranteed by Gallagher's theorem.  In the case $G=\type{G}_2(q)$, $\tw{3}\type{D}_4(q)$, $\type{F}_4(q),$ or $\type{E}_8(q)$, this is accomplished by \prettyref{cor:exceptLambda1}, completing the proof in these cases.  

We may therefore assume $G$ is type $\type{E}_7$ or $\type{B}_n$.  Now, $\delta$ is trivial on $R_1(\la_1)$ by the same proof as \cite[Lemma 3.13]{SFT20}.  Then taking into consideration \eqref{eq:W_1R_1}, we may argue analogously to \prettyref{prop:equivextB}, replacing $\zeta$ with a generator of the cyclic group of size $q+1$ in $\F_{q^2}^\times$ in the case $\type{B}_n$, to obtain $\delta=1$.
%
%
\end{proof}
 

 \begin{proposition}\label{prop:E6}
 Let $q\equiv 3\pmod 4$ and let $G=\type{E}_6^\epsilon(q)_{sc}$ and keep the notation from before.  Then the  $\aut(G)_Q$-equivariant bijection $\Omega_1\colon\irr_{2'}(G) \rightarrow \irr_{2'}(N_1)$ is also $\galh_2$-equivariant.  
 \end{proposition}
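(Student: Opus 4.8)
The plan is to track the effect of $\sigma\in\galh_2$ on the common parametrization of $\chi\in\irr_{2'}(G)$ and $\Omega_1(\chi)\in\irr_{2'}(N_1)$ by pairs $(\la_1,\eta_1)$ recalled before \prettyref{prop:d2rational}, where $\la_1\in\mathcal{E}(T_1,s)$ for a $2$-central semisimple $s$, $\eta_1\in\irr_{2'}(W_1(\la_1))$, and $\Omega_1(\chi)=\Ind_{(N_1)_{\la_1}}^{N_1}(\Lambda_1(\la_1)\eta_1)$. The $N_1$-side is handled directly: since $G=\type{E}_6^\epsilon(q)$ occurs in \prettyref{cor:exceptLambda1}, the extension map $\Lambda_1$ is $\gal$-equivariant, so $\Lambda_1(\la_1)^\sigma=\Lambda_1(\la_1^\sigma)$, and as $(N_1)_{\la_1}=(N_1)_{\la_1^\sigma}$ we obtain
\[\Omega_1(\chi)^\sigma=\Ind_{(N_1)_{\la_1}}^{N_1}\!\bigl(\Lambda_1(\la_1)^\sigma\,\eta_1^\sigma\bigr)=\Ind_{(N_1)_{\la_1^\sigma}}^{N_1}\!\bigl(\Lambda_1(\la_1^\sigma)\,\eta_1^\sigma\bigr),\]
which is the member of $\irr_{2'}(N_1)$ attached to $(\la_1^\sigma,\eta_1^\sigma)$. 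So it suffices to show that $\chi^\sigma$ is the member of $\irr_{2'}(G)$ attached to $(\la_1^\sigma,\eta_1^\sigma)$.

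For the $G$-side I would first use the principal-series description in $G=\bG^F$: by \prettyref{thm:MS7.7} and \prettyref{lem:MS7.9}, $\chi=\HC_T^G(\la)_\eta$ for a maximally split torus $T$, with $2\nmid[W:W(\la)]$, $\eta\in\irr_{2'}(W(\la))$, and $\la$ corresponding to $s$ under duality in $G$, so that $C(\la)$ embeds in $(\zen(\bG)/\zen^\circ(\bG))^F\cong\zen(\bG)^F$, a cyclic group of order $1$ or $3$ since $\bG$ is of type $\type{E}_6$. Now \prettyref{thm:GaloisAct} applies: $\gamma_{\la,\sigma}$ is a character of the odd-order group $C(\la)$ of order dividing $2$, hence trivial; $\delta_{\la,\sigma}=\delta'_{\la,\sigma}=1$ because $\Lambda$ is $\gal$-equivariant for type $\type{E}_6$ by \prettyref{cor:exceptLambda1}; and $R(\la)$ is a parabolic subsystem of the simply-laced root system of $\type{E}_6$, so it has no component of type $\type{G}_2$, $\type{E}_7$, or $\type{E}_8$, whence $\eta^{(\sigma)}=\eta^\sigma$ by \prettyref{prop:etasigma}. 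Thus $\chi^\sigma=\HC_T^G(\la^\sigma)_{\eta^\sigma}$.

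It remains to transport this through the dictionary identifying the principal-series parameter $(\la,\eta)$ of $\chi$ with the parameter $(\la_1,\eta_1)$ used to define $\Omega_1$ — the one coming from the proof of \cite[Theorem 7.8]{malleheightzero} on $N_1\leqslant\bG^{vF}\cong G$ — and to see this dictionary is $\gal$-equivariant. On the semisimple part this is clear: $\la$ and $\la_1$ both correspond to the class of $s$, and $\la^\sigma$, $\la_1^\sigma$ both correspond to its $\sigma$-twist (whose $2'$-part is $s^{2^r}$, and whose $2$-part is determined by $\sigma$ uniformly, since the order of the $2$-part of $s$ divides the $2$-parts of $|T|$ and $|T_1|$ alike). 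For the remaining (``unipotent'') part, the passage $\eta\mapsto\eta_1$ runs through Jordan decomposition in $\cen_{\bG^\ast}(s)$: its identity component is, for the relevant $s$, of type $\type{D}$ and $\cen_{\bG^\ast}(s)/\cen^\circ_{\bG^\ast}(s)$ is cyclic of order $1$ or $3$, so all its unipotent characters are rational by \cite{lus02}; the whole irrationality of $\eta$ (and of $\eta_1$) is therefore carried by the common $C_3$-factor matching $\zen(\bG)^F\cong\zen(\bG)^{vF}$, on which $\sigma$ acts identically on both sides. Consequently the dictionary sends $(\la^\sigma,\eta^\sigma)$ to $(\la_1^\sigma,\eta_1^\sigma)$, so $\chi^\sigma$ is attached to $(\la_1^\sigma,\eta_1^\sigma)$ and $\Omega_1(\chi^\sigma)=\Omega_1(\chi)^\sigma$.

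The main obstacle is the third paragraph: establishing $\gal$-equivariance of this dictionary. In effect one needs a $\Phi_2$-analogue of \prettyref{thm:GaloisAct}/\cite[Theorem 3.8]{SFgaloisHC} governing the Galois action on the generalized Harish-Chandra series of $\bG^{vF}$ attached to $T_1$, and one must check that this action, written on $\irr(W_1(s))$, agrees with the principal-series action on $\irr(W(s))$ after the identification $W(s)\cong W_1(s)$ available because, for type $\type{E}_6$, the longest Weyl element acts on the relevant centralizers as $-1$ composed with a diagram automorphism (cf.\ \eqref{eq:W_1R_1}). Once the framework is in place, the accompanying ``$\gamma$'' and ``$\delta'$'' terms for the centralizer of $s$ vanish for the same two reasons as above — its component group has odd order and its root system is simply-laced — so I expect the remaining bookkeeping to be routine; the essential new input is the $\Phi_2$-version of the Galois-action formula.
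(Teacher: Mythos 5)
Your treatment of the $N_1$-side is correct and coincides with the paper's: $\gal$-equivariance of $\Lambda_1$ from \prettyref{cor:exceptLambda1} together with $\eta_1^\sigma=\eta_1$ shows $\Omega_1(\chi)^\sigma$ has parameter $(\la_1^\sigma,\eta_1)=(s^\sigma,\eta_1)$. On the $G$-side, however, you have a genuine gap, and you yourself identify it: you compute $\chi^\sigma=\HC_T^G(\la^\sigma)_{\eta^\sigma}$ inside $\bG^F$ using \prettyref{thm:GaloisAct}, but $\Omega_1$ is defined by matching the $(\la_1,\eta_1)$-parameters coming from the $d=2$ parametrization of \cite[Theorem~7.8]{malleheightzero} relative to $\bG^{vF}$, and you have no proved, $\gal$-equivariant dictionary between the principal-series parameter $(\la,\eta)$ and $(\la_1,\eta_1)$. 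Your third paragraph gestures at one via Jordan decomposition and rationality of unipotent characters of $\cen_{\bG^\ast}(s)$, but the argument that ``all the irrationality lives on the common $C_3$-factor'' is not actually established, and your closing paragraph concedes that what is really needed is a $\Phi_2$-analogue of \prettyref{thm:GaloisAct} which is not available in this paper. As written, the proposal does not close.

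The paper's proof sidesteps the entire dictionary issue by a degree argument that makes the principal-series side of \prettyref{thm:GaloisAct} unnecessary. Its key observation is structural: the only non-trivial $s$ with $\mathcal{E}(G,s)\cap\irr_{2'}(G)\neq\emptyset$ has $\cen_{\bG^\ast}(s)$ connected of type $\type{D}_5^\epsilon(q)\times(q-\epsilon)$, and the eight odd-degree unipotent characters of this centralizer have pairwise distinct degrees. Since Jordan decomposition preserves degrees up to the common factor $[G^\ast:\cen_{G^\ast}(s)]_{p'}$ and $\sigma$ preserves degrees, the $\sigma$-image of $\chi\in\mathcal{E}_{2'}(G,s)$ is completely determined by the image of its Lusztig series, and $\mathcal{E}(G,s)^\sigma=\mathcal{E}(G,s^\sigma)$ by \cite[Lemma~3.4]{SFT18a}. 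Unipotent odd-degree characters of $G$ are $\gal$-fixed by \cite[Proposition~4.4]{SFgaloisHC}, and $\eta_1$ is rational because $W_1(\la_1)\cong W_1(s)$ is a genuine Weyl group when $\cen_{\bG^\ast}(s)$ is connected. This forces $\chi^\sigma$ to have parameter $(s^\sigma,\eta_1)$, matching what you obtained on the $N_1$-side, without ever invoking the Howlett--Lehrer parametrization in $\bG^F$ or needing a $d=2$ analogue of the Galois formula. If you replace your second and third paragraphs with this degree-counting argument, your first paragraph finishes the proof.
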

 \begin{proof}
 Throughout, we identify $G$ with $\bG^{vF}$.
By analyzing the possible centralizer structures $\cen_{G^\ast}(s)$ of semisimple elements of $G^\ast$ (see also \cite[Lemma 4.13]{navarro-tiep:2015:irreducible-representations-of-odd-degree}), we see that the only possibilities for $s$ yielding $\chi\in\mathcal{E}(G,s)$ with odd degree are $s=1$ and those with $\cen_{G^\ast}(s)$ of type $\type{D}_5^\epsilon(q) \times (q-\epsilon)$.  Recall that odd-degree characters in $\mathcal{E}(G,s)$ are in bijection with odd-degree unipotent characters of $\cen_{G^\ast}(s)$ in such a way that if $\chi$ corresponds to the unipotent character $\psi$, we have $\chi(1)=[G^\ast:\cen_{G^\ast}(s)]_{p'}\psi(1)$.  

The unipotent characters of odd degree of $G$ are all fixed by $\gal$, using \cite[Proposition 4.4]{SFgaloisHC}. In the non-unipotent cases, $\cen_{\bG^\ast}(s)$ is connected, and the eight odd-degree unipotent characters of $\cen_{G^\ast}(s)$ have distinct degrees. Hence, the image of $\chi\in\mathcal{E}(G,s)\cap \irr_{2'}(G)$ under $\sigma\in\galh_2$ is completely determined by the image $\mathcal{E}(G,s)^\sigma$ of $\mathcal{E}(G,s)$ under $\sigma$. Further, \cite[Lemma 3.4]{SFT18a} implies that $\mathcal{E}(G,s)^\sigma=\mathcal{E}(G,s^\sigma)$, where if $\sigma\in\galh_2$ sends odd roots of unity to the $2^r$ power and $2$-power roots of unity to the power $b$, we define $s^\sigma:=s_{2'}^{2^r}s_{2}^b$, where $s=s_{2'}s_2=s_2s_{2'}$ with $|s_{2'}|$ odd and $|s_2|$ a power of $2$.

Then if $\chi\in\irr_{2'}(G)$ corresponds to the pair $(s, \eta_1)$ or $(\la_1, \eta_1)$, we see that for $\sigma\in\galh_2$, $\chi^\sigma$ corresponds to the pair $(s^\sigma, \eta_1)=(s^\sigma, \eta_1^\sigma)$.  (Note that since $\cen_{\bG^\ast}(s)$ is connected, $W_1(\la_1)\cong W_1(s)$ is a Weyl group in this case, and hence $\eta_1$ is rational-valued.)

On the other hand, let $\Omega_1(\chi)=\Ind_{(N_1)_{\la_1}}^{N_1}(\Lambda_1(\la_1)\eta_1)$ correspond to $(\la_1, \eta_1)$ or $(s, \eta_1)$ and let $\sigma\in\galh_2$.  As before, we have $\eta_1^\sigma=\eta_1$.  Further, by \prettyref{cor:exceptLambda1}, $\Lambda_1$ is $\gal$-equivariant, and certainly $(N_1)_{\la_1}=(N_1)_{\la_1^\sigma}$, hence $\Omega(\chi)^\sigma=\Ind_{(N_1)_{\la_1}}^{N_1}(\Lambda_1(\la_1)^\sigma\eta_1^\sigma)=\Ind_{(N_1)_{\la_1}}^{N_1}(\Lambda_1(\la_1^\sigma)\eta_1)$.  Further, note that $\la_1^\sigma\in\mathcal{E}(T_1, s)^\sigma=\mathcal{E}(T_1, s^\sigma)$ is also completely determined by $s^\sigma$.  Then again here we have $\Omega_1(\chi)^\sigma$ corresponds to the pair $(s^\sigma, \eta_1)$, yielding that $\Omega_1(\chi)^\sigma=\Omega_1(\chi^\sigma)$, as desired.
 \end{proof}
 
  Together, Propositions \ref{prop:d2rational} and \ref{prop:E6} yield the desired result:
 \begin{corollary}
 \prettyref{thm:main}\ref{main2} holds for $G=\type{G}_2(q)$, $\tw{3}\type{D}_4(q)$, $\type{F}_4(q),$ $\type{E}_6^\epsilon(q)_{sc}$, $\type{E}_7(q)_{sc}$, $\type{E}_8(q)$, or
 $\type{B}_n(q)_{sc}$ with $n\geq3$, taking $M=N_1$.
 \end{corollary}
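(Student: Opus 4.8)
The plan is simply to assemble the two preceding propositions together with the setup recalled at the start of this section. Since $d_2(q)=2$ is equivalent to $q\equiv 3\pmod 4$, we are in exactly that situation: by \cite[Theorem 7.8]{malleheightzero}, combined with \cite[Proposition 2.5]{CS13}, the group $N_1=\bg{N}^{vF}$ is a proper $\aut(G)_Q$-stable subgroup of $G$ containing $\norm_G(Q)$ for a Sylow $2$-subgroup $Q$, and $\Omega_1\colon\irr_{2'}(G)\to\irr_{2'}(N_1)$ is an $\aut(G)_Q$-equivariant bijection under which corresponding characters lie over the same character of $\zen(G)$. Hence, to deduce \prettyref{cond:condition} for $G$ and $\ell=2$ with $M=N_1$, it remains only to check that $\Omega_1$ can be taken $\galh_2$-equivariant.

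First I would dispose of the case that $G$ is one of $\type{G}_2(q)$, $\tw{3}\type{D}_4(q)$, $\type{F}_4(q)$, $\type{E}_7(q)_{sc}$, $\type{E}_8(q)$, or $\type{B}_n(q)_{sc}$ with $n\geq 3$: here \prettyref{prop:d2rational} tells us that every character in $\irr_{2'}(G)$ and every character in $\irr_{2'}(N_1)$ is rational-valued, hence fixed by all of $\gal$ and in particular by $\galh_2\leqslant\gal$. Consequently \emph{any} bijection $\irr_{2'}(G)\to\irr_{2'}(N_1)$ — and in particular $\Omega_1$ — is automatically $\galh_2$-equivariant. For the one remaining listed group $G=\type{E}_6^\epsilon(q)_{sc}$, the $\galh_2$-equivariance of $\Omega_1$ is precisely the statement of \prettyref{prop:E6}. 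In either case $\Omega_1$ is therefore an $\aut(G)_Q\times\galh_2$-equivariant bijection with the required central compatibility, so \prettyref{cond:condition} holds for $G$ and $\ell=2$ with $M=N_1$, which is the assertion of \prettyref{thm:main}\ref{main2} for these groups.

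Since the substantive work has already been carried out in Propositions~\ref{prop:d2rational} and~\ref{prop:E6}, there is no genuine obstacle at this stage; the only bookkeeping point to record is that \prettyref{thm:main}\ref{main2} additionally excludes types $\type{D}_n$ and $\tw{2}\type{D}_n$ (which do not appear among the groups above) and that the remaining case allowed by \prettyref{thm:main}\ref{main2}, namely $G=\Sp_{2n}(q)=\type{C}_n(q)_{sc}$, was already settled in \prettyref{sec:typeC}. Thus the present corollary completes the proof of \prettyref{thm:main}\ref{main2} in all cases.
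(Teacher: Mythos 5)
Your argument is correct and is exactly the assembly the paper intends: \prettyref{prop:d2rational} renders any bijection $\galh_2$-equivariant on those six families by rationality, while \prettyref{prop:E6} handles $\type{E}_6^\epsilon$ directly, and the $\aut(G)_Q$-equivariance, properness of $N_1\supseteq\norm_G(Q)$, and central compatibility for $\Omega_1$ are already in place from the opening of \prettyref{sec:typeB}. The paper presents this corollary with no written proof beyond the sentence ``Together, Propositions \ref{prop:d2rational} and \ref{prop:E6} yield the desired result,'' so you have simply made explicit the same reasoning.
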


We end by discussing briefly the Sylow $2$-subgroups of the groups under consideration here to arrive at \prettyref{thm:galmck}. By the Corollary of the main theorems of \cite{kondratiev2005}, the Sylow $2$-subgroups of the simple groups listed in \prettyref{thm:galmck}  are self-normalizing.  Further, by \cite{NTreal}, the abelianization $P/[P,P]$ of the Sylow $2$-subgroups in these cases are elementary abelian. 
 
 \begin{proposition}[\cite{NTreal}]\label{prop:NTreal}
 Let $q$ be a power of an odd prime and let $S$ be a simple group $\type{G}_2(q)$, $\tw{3}\type{D}_4(q)$, $\type{F}_4(q),$  $\type{E}_7(q)$, $\type{E}_6^\epsilon(q)$ with $4\mid(q+\epsilon)$, $\type{E}_8(q)$, $\type{C}_n(q)$ with $n\geq 2$,
 $\type{B}_n(q)$ with $n\geq3$, or $\type{D}_n^\epsilon(q)$ with $n\geq 4$.  Let $P$ be a Sylow $2$-subgroup of $S$.  Then $P/[P,P]$ is elementary abelian:
 \end{proposition}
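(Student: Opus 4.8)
The plan is to deduce this from the known structure of the Sylow $2$-subgroups of these groups together with one elementary closure property. First I would record the key reduction: for a finite $2$-group $P$, the quotient $P/[P,P]$ is elementary abelian if and only if $x^2\in[P,P]$ for every $x\in P$, equivalently $[P,P]=\Phi(P)$. This property passes to direct factors, and if $A$ is a finite group with $A/[A,A]$ elementary abelian then $(A\wr C_2)/[A\wr C_2,\,A\wr C_2]\cong A/[A,A]\times C_2$ is again elementary abelian; hence, by induction, every iterated wreath product $(\cdots((P_0\wr C_2)\wr C_2)\cdots\wr C_2)$ with $P_0/[P_0,P_0]$ elementary abelian has elementary abelian abelianization, and so does any direct product of such groups. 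Since a generalized quaternion, dihedral, or semidihedral $2$-group $R$ has $R/[R,R]\cong C_2\times C_2$, and $C_2$ trivially qualifies, it will suffice to exhibit $P$ as a direct product of iterated $C_2$-wreath products built on base pieces that are generalized quaternion, dihedral, or semidihedral.

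Next I would treat the classical groups $\type{B}_n(q)$ ($n\ge3$), $\type{C}_n(q)$ ($n\ge2$), and $\type{D}_n^\epsilon(q)$ ($n\ge4$) using the classical description of their Sylow $2$-subgroups (Carter--Fong; and Weir in the case $q\equiv1\pmod4$): writing $n$ in base $2$, $P$ is a direct product, indexed by the nonzero binary digits of $n$, of iterated wreath products $P_0\wr C_2\wr\cdots\wr C_2$, where the base piece $P_0$ is a Sylow $2$-subgroup of $\SL_2(q)=\Sp_2(q)$ in type $\type{C}$ and of a small orthogonal group (such as $\Omega_3(q)$ or $\Omega_4^\pm(q)$) in types $\type{B}$ and $\type{D}$; in every case $P_0$ is generalized quaternion, dihedral, or semidihedral, so the first paragraph applies. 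The point to watch is that the cyclic torus factors $C_{(q\pm1)_2}$ of order $\ge4$ occur only \emph{inside} these rank-one base pieces and never survive as direct factors of $P$; this is exactly what fails in type $\type{A}$ (where $\SL_n$ and $\SU_n$ do acquire such a factor), and is why those types are excluded from the statement.

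Finally I would handle the exceptional groups $\type{G}_2(q)$, $\tw{3}\type{D}_4(q)$, $\type{F}_4(q)$, $\type{E}_6^\epsilon(q)$ with $4\mid q+\epsilon$, $\type{E}_7(q)$, and $\type{E}_8(q)$ case by case from the known structure of their Sylow $2$-subgroups (as underlies the analysis in \cite{kondratiev2005}): in each case $P$ again has the shape above, obtained by iterated $C_2$-wreathing on base pieces that are Sylow $2$-subgroups of rank-one and rank-two subsystem subgroups of type $\type{A}_1$ and $\type{A}_1+\type{A}_1$ over $\F_q$ or $\F_{q^3}$ --- and since $\bG$ is simply connected these subgroups are products of copies of $\SL_2$ (possibly modulo a central involution), giving quaternion or dihedral base pieces. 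The congruence hypothesis $4\mid q+\epsilon$ in type $\type{E}_6^\epsilon$ is precisely what forces $P$ to lie in the normalizer of a $\Phi_2$-torus, where this shape holds; in the complementary congruence a cyclic direct factor of order $\ge4$ appears and the argument (and indeed the conclusion) can break down, which is why that case is omitted. The main obstacle is not conceptual but organizational: there is no single uniform argument, so one must run through each family separately and dispose of the finitely many small $q$ for which a base piece degenerates by a direct computation --- all of which is carried out in \cite{NTreal}, to which we refer for the full details.
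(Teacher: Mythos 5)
The paper's own ``proof'' is nothing more than a one-line citation of Propositions 3.5, 3.7, and 4.1 of \cite{NTreal}, so there is no internal argument to compare against; your proposal, by contrast, supplies a structural sketch of the mechanism behind those cited results before likewise deferring to \cite{NTreal} for full details. Your sketch is essentially correct as a road map: the key lemma that $(A\wr C_2)/[A\wr C_2,A\wr C_2]\cong A/[A,A]\times C_2$, so that iterated $C_2$-wreath products over a base with elementary abelian abelianization again have elementary abelian abelianization, is right, and the Carter--Fong/Weir description of Sylow $2$-subgroups of classical groups as direct products of such iterated wreath products over (generalized) quaternion, dihedral, or semidihedral bases is the correct picture. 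What you gloss over is that the Sylow $2$-subgroup of the \emph{simple} group $S$ (as opposed to, say, $\Sp_{2n}(q)$) and the Sylow $2$-subgroups in most exceptional types are only \emph{quotients} of such wreath products by central $2$-subgroups --- the base pieces in the exceptional cases are central products of copies of $\SL_2$, not direct products. You should therefore state the companion closure fact explicitly: if $N\lhd P$ then $(P/N)/[P/N,P/N]=P/[P,P]N$ is a quotient of $P/[P,P]$, so the elementary-abelian property is inherited by arbitrary quotients. With that addendum your outline matches the content of the cited reference; the exceptional-type analysis in your third paragraph remains hand-wavy, but since you, like the paper, ultimately refer to \cite{NTreal} for the case-by-case verification, this is an acceptable summary rather than a gap.
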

 \begin{proof}
 This is directly from Propositions 3.5, 3.7, and 4.1 of \cite{NTreal}.
 \end{proof}
 
 \begin{proof}[Proof of \prettyref{thm:galmck}]
 By combining \prettyref{prop:NTreal} with \prettyref{prop:d2rational} and the fact that $P=\norm_S(P)$ in the stated cases, we immediately obtain the result, except possibly for $\type{C}_n(q)=\operatorname{PSp}_{2n}(q)$.  However, in the latter case, \cite[Lemma 4.10 and Theorem B]{SFT20}, together with \prettyref{lem:sinvol}, implies that every member of $\irr_{2'}(S)$ is again fixed by $\galh_2$, completing the proof.
 \end{proof}

\section{Acknowledgements}

The author began this work during her stay at the Mathematical
Sciences Research Institute in Berkeley, California as part of the Spring 2018 MSRI semester program
``Group Representation Theory and Applications".  She thanks the MSRI and the organizers of the program for making her stay possible and providing a collaborative and productive work environment.
She would also like to thank 
G. Navarro, B. Sp{\"a}th, and C. Vallejo for helpful discussions and an early preprint of their reduction theorem.  She further thanks C. Vallejo for her comments on an earlier draft of this work.

\bibliographystyle{alpha}
\bibliography{researchreferences}

\end{document}